\newtheorem{theorem}{Theorem}[section]
\newtheorem{lemma}{Lemma}[section]
\newtheorem{remark}{Remark}[section]
\numberwithin{equation}{section}
\title{Acousto-Electric Tomography with Total Variation Regularization}
\author{Bolaji James Adesokan\thanks{DTU Compute, Technical University of Denmark, 2800 Kgs. Lyngby, Denmark (bcsj@dtu.dk, kiknu@dtu.dk)}\and Bj\o rn Jensen\footnotemark[1] \thanks{Corresponding author}\and Bangti Jin\thanks{Department of Computer Science, University College London, Gower Street, London WC1E 6BT, UK (b.jin@ucl.ac.uk, bangti.jin@gmail.com)}\and Kim Knudsen\footnotemark[1]}
\begin{document}

\maketitle

\begin{abstract}
We study the numerical reconstruction problem in acousto-electric tomography of recovering
the conductivity distribution in a bounded domain from interior power density data. 
We propose a numerical method for recovering discontinuous
conductivity distributions, by reformulating it as an optimization problem with $L^1$ fitting and
total variation penalty subject to PDE constraints. We establish continuity and differentiability
results for the forward map, the well-posedness of the optimization
problem, and present an easy-to-implement and robust numerical method based on successive linearization, smoothing and iterative reweighing.
Extensive numerical experiments are presented to illustrate the feasibility of the proposed approach.\\
\textbf{Keywords}: {acousto-electric tomography, reconstruction,  total variation}
\end{abstract}

\section{Introduction}

Acousto-electric tomography (AET) is one promising hybrid data imaging modality that has received
increasing interest in the last decade \cite{ZhangWang:2004,Ammari:2008,WidlakScherzer:2012}. It exploits the acousto-electric effect \cite{Korber1909,FoxHerzfeldRock1946,JossinetLavandierCathignol1998}, i.e., the occurrence of small, localized changes in conductivities in the interior of a body due to a focused ultrasonic wave generated in the exterior. When the ultrasound is induced in combination with Electrical Impedance Tomography (EIT), one may  reconstruct the interior power density data from EIT measurements \cite{Ammari2007},
which can then be used for imaging the internal conductivity.
The availability of internal data greatly improves the resolution and contrast in the reconstructions when compared with conventional EIT. 
 
Mathematically, AET  can be formulated as follows. Let $\Omega \subset \mathbb{R}^d $, $ d = 2,3$, be an open bounded domain with a Lipschitz boundary $ \Gamma $. The conductivity inside $\Omega$ is denoted by $\sigma.$ 
Applying a current field $f_j$, $j=1,\ldots,n,$ to $\Gamma$ induces an interior electric potential $u_j$ given as the solution to
\begin{equation*}
        -\nabla \cdot(\sigma\nabla u_j) = 0  \text{ in $ \Omega $}, \quad\mbox{with }
        \sigma \frac{\partial u_j}{\partial \nu} = f_j  \text{ on $ \Gamma $.}
\end{equation*}
Here $ \nu $ denotes the unit outward normal vector on $ \Gamma $. The AET forward problem  is now  to find the interior power density defined by $ H_j(\sigma) = \sigma|\nabla u_j|^2 $ from knowledge of $\sigma$, $j=1,\ldots,n$. The corresponding inverse problem is, from knowledge of $H_j(\sigma)$ (and $f_j),$ to recover the conductivity $\sigma.$
The issues of unique recovery and stability have been extensively studied (see, e.g., \cite{Bal2013,AlbertiCapdeboscqBook} and references therein). 
For example, in two spatial dimensions uniqueness is known for three properly chosen boundary conditions ensuring a non-vanishing Jacobian condition in the interior 
\cite{AlbertiCapdeboscqBook}.

Various aspects of numerical reconstruction in AET have been considered \cite{Ammari2007,
GebauerScherzer:2008,KuchmentKunyansky:2011,HoffmannKnudsen2014,BalKnudsen:2018}. Ammari et al.\ \cite{Ammari2007} proposed an
algorithm for recovering the conductivity from multiple power densities, which essentially relies
on a perturbation approach and thus most suitable for small inclusions (relative to a known background).
Capdesboscq et al. \cite{Capdeboscq2009}
proposed two optimal control formulations for reconstructing the conductivity and presented numerical results to illustrate the
effectiveness of the two approaches. Bal et al. \cite{Bal2012} proposed a Levenberg-Marquardt iteration for numerical inversion, analyzed the convergence and regularizing  properties of the algorithm in a Hilbert
space setting (i.e., $H^s(\Omega)$ with $s>d/2$), under the assumptions that the derivative of the forward operator is injective and the data is noise-free. The case of limited angle boundary data was considered in \cite{HubmerKnudsenLiSherina2018}.
The explicit formulation of the reconstruction problems as a regularized  output least-squares problem was done in \cite{AdesokanKnudsenKrishnanRoy2018} and taken further to Perona-Malik type edge enhancing regularizers \cite{Roy-Borzi}. 
See also, e.g., \cite{BalGuoMonard:2014,MonardRim:2018}, for the case of anisotropic conductivities.

Total variation penalty is extremely popular for image processing. Since the seminal work \cite{RudinOsherFatemi:1992}
on image denoising, it has also been widely applied to solving inverse problems. There are several
works on nonlinear parameter identifications for PDEs with total variation penalty, e.g., underground water flow
\cite{chen_augmented_1999}, electrical impedance tomography \cite{HinzeKaltenbacher:2018} and
quantitative photo-acoustic tomography \cite{HannukainenHyvonenMajanderTarvainen:2016}, which have inspired the present work.
Note that in these works, typically an $L^2(\Omega)$ fitting term
is employed, which is well suited for theoretical considerations and numerical computations.

The main focus of this work is to reconstruct conductivities that are mostly piecewise
constants using a PDE constrained optimal control formulation with a total variation penalty. Our main contributions are as follows. First, we provide a proper functional analytic setting for
the reconstruction problem with discontinuous conductivity distributions. This is achieved by carefully analyzing
the parameter-to-data map, e.g., continuity and differentiability in Theorem \ref{prop:deriv-H}. The analysis relies crucially on
the $W^{1,q}(\Omega)$ regularity of the state variable $u(\sigma)$ in Theorem \ref{thm:reg}. Second, we formulate the reconstruction problem as an optimization problem on an $L^1(\Omega)$ fitting term and total variation penalty term: 
\begin{equation*}
\min_{\sigma} \mathcal J_\beta(\sigma) = \sum_{j=1}^n\|H_j(\sigma)-z_j\|_{L^1(\Omega)} + \beta |\sigma|_{\textup{TV}}, 
\end{equation*}
over a suitable admissible set, and analyze the well-posedness of the formulation,
e.g., existence and stability of minimizers in Theorems \ref{thm:minimizer} and \ref{thm:min-stab}. Third, we
describe an easy-to-implement numerical algorithm based on recursive linearization, smoothing and iterative reweighing, cf.
Algorithm \ref{alg:euclid}. Fourth, we present extensive
numerical experiments with full and partial data to illustrate the effectiveness of the proposed approach. Further, we analyze
the convergence of the discrete approximations by the Galerkin finite element method for both nonlinear and linearized models using suitable
$W^{1,q}(\Omega)$ estimates on the finite element approximations in Lemmas \ref{lem:dis-Meyers} and \ref{lem:fem-W1q}.

The paper is organized as follows. In Section \ref{sec:prelim}, we recall
preliminary results on function spaces. Then in Section
\ref{sec:forward}, we discuss mapping properties of the solution operator and
parameter-to-data map, e.g., continuity and differentiability. In Section \ref{sec:rec},
we formulate the AET reconstruction into a PDE constrained optimization problem and
analyze its analytic properties. In Section \ref{sec:alg}, we describe an algorithm for the numerical
solution of the optimization problem.
Last, in Section \ref{sec:numer}, we present extensive numerical results to illustrate
the effectiveness of the reconstruction technique. In Appendix \ref{sec:fem}, we give
a finite element convergence analysis. Throughout, the notation $C$ denotes a generic constant
which may differ at each occurrence, but it is always independent of the mesh size $h$
and other quantities under study.

\section{Preliminaries on function spaces}\label{sec:prelim}

This part reviews basic functional analytic tools and also fix the notation.

\subsection{Sobolev spaces}

First, we recall Sobolev spaces, which will be used extensively below.
For any multi-index $\alpha\in \mathbb{N}^d$, $|\alpha|$ denotes the sum of all components.
Given a domain $\Omega\subset\mathbb{R}^d$ with a Lipschitz continuous boundary $\Gamma$,
for any $m\in \mathbb{N}$, $1\leq p\leq \infty$, we follow \cite{Adams2003} and define the
Sobolev space $W^{m,p}(\Omega)$ by
\begin{equation*}
W^{m,p}(\Omega)=\{u\in L^{p}(\Omega): D^{\alpha}u\in L^{p}(\Omega) \text{ for } 0\leq	|\alpha|\leq m \}.
\end{equation*}
It is equipped with the norm
\begin{equation*}
  \|u\|_{W^{m,p}(\Omega)} = \begin{cases}
    \left(\sum\limits_{0\leq |\alpha|\leq m}\|D^{\alpha}u\|_{L^p(\Omega)}^{p}\right)^{\frac{1}{p}}, & \text{ if }1\leq p<\infty,\\
    \max\limits_{0\leq |\alpha|\leq m}\|D^{\alpha}u\|_{L^\infty(\Omega)} ,& \text{ if } p=\infty.
  \end{cases}
\end{equation*}
The space $W_{0}^{m,p}(\Omega)$ is the closure of $C_c^{\infty}(\Omega)$ in $W^{m,p}(\Omega)$. Its dual space is denoted by $W^{-m,p'}(\Omega)$, with ${1}/{p}+{1}/{p'}=1$, i.e., $p'$ is the conjugate exponent of $p$. Also we use $H^{m}(\Omega)=W^{m,2}(\Omega)$, and $H_0^m(\Omega)=W_0^{m,2}(\Omega)$. We denote by $(W^{m,p}(\Omega))'$ the dual space of $W^{m,p}(\Omega)$.

The bracket $(\cdot,\cdot)$ denotes the $L^2(\Omega)$ inner product,  $({\cdot,\cdot})_{L^2(\Gamma)}$ the $L^2(\Gamma)$ inner product, and $ \langle{\cdot,\cdot}\rangle $ dual pairing. The space $V\equiv H_\diamond^1(\Omega) \subset H^1(\Omega) $ consists of functions with zero mean on the boundary, that is
$$H_\diamond^1(\Omega) := \{ u\in H^1(\Omega): \int_\Gamma u\,ds = 0 \} \simeq H^1(\Omega)/\mathbb{R}.$$
We denote by $H^{-\frac12}(\Gamma)$ the dual space of $H^{\frac12}(\Gamma)$, and $ H_\diamond^{-\frac12}
(\Gamma):=\{v\in (H^{\frac12}(\Gamma))': \langle v,1\rangle=0\}$.
Functions in $ H_\diamond^1(\Omega) $ satisfy the Poincar\'{e} type inequality
\begin{equation}\label{eqn:poincare}
 \|v\|_{H^1(\Omega)} \leq C\|\nabla v\|_{L^2(\Omega)}, \quad \forall v \in H_\diamond^1(\Omega),
\end{equation}
which can be derived by standard compactness arguments (see, e.g., \cite[Section 5.4]{Attouch2006}).

\subsection{Space of bounded variation}
Below we shall use the total variation penalty in the regularized reconstruction, for which the proper function
space is the space of bounded variation $BV(\Omega)$. We only describe some basic properties, and refer
interested readers to \cite{AmbrosioFuscoPallara:2000,Attouch2006,EvansGariepy:2015} for details.
The space $ BV(\Omega)$ consists of functions $v\in L^1(\Omega)$ whose distributional derivative $Dv$ is a Radon measure, i.e.,
\begin{equation*}
  BV(\Omega) = \{v\in L^1(\Omega): \,|v|_{\rm TV} <\infty \},
\end{equation*}
where the total variation $|v|_{\rm TV}$ is defined by
\begin{equation*}
  |v|_{\rm TV} = \int_\Omega d|Dv| =
  \sup\left\{\int_\Omega v\operatorname{div}\psi\,dx: \psi \in C_c^1(\Omega)^d, |\psi(x)|\leq 1\right\},
\end{equation*}
and $|\cdot|$ denotes the Euclidean norm of vectors in $\mathbb{R}^d$.
The space $ BV(\Omega)$  is a Banach space when equipped with the norm
\begin{equation*}
  \|v\|_{BV(\Omega)} = \|v\|_{L^1(\Omega)} + |v|_{\rm TV}.
\end{equation*}
There are several different notions of convergence on the space $BV(\Omega)$.
Besides the strong and the weak-$ \ast $ topology on $ BV(\Omega) $, there is the \emph{intermediate} topology (also known as strict convergence), which is in between the two and characterized by strong $ L^1(\Omega) $ convergence and convergence of $ |\cdot|_{\rm TV} $ in $ \mathbb{R} $, i.e. a sequence $ \{v_n\} \subset BV(\Omega) $ converges in the intermediate sense to $ v \in BV(\Omega) $ if $ v_n \to v $ in $ L^1(\Omega) $ and $ |v_n|_{\rm TV} \to |v|_{\rm TV} $ as $ n \to \infty $ \cite[Definition 10.1.3, p. 374]{Attouch2006} \cite[Definition 3.14, p. 125]{AmbrosioFuscoPallara:2000}. The intermediate convergence is very useful in practice, and we will equip $BV(\Omega)$ with this topology in the sequel.

The following results on $BV(\Omega)$ are very useful. The first assertion can be found at \cite[Theorem 10.1.4, p. 378]{Attouch2006}, the second at \cite[Theorem 5.2, p. 199]{EvansGariepy:2015}, and the third at \cite[Theorem 10.1.2, p. 375]{Attouch2006}. For Assertion (i), the domain $\Omega$ has to satisfy suitable regularity, e.g., extension domain in the sense of \cite[Definition 3.20, p. 130]{AmbrosioFuscoPallara:2000} (see, e.g., \cite[Theorem 3.23, p. 132]{AmbrosioFuscoPallara:2000}). Any open set with a compact Lipschitz boundary is an extension domain \cite[Proposition 3.21, p. 131]{AmbrosioFuscoPallara:2000}.
\begin{lemma} \label{lem:BV-props}
The following properties hold on the space $BV(\Omega)$.
\begin{itemize}
    \item[$\rm(i)$]  The space $ BV(\Omega) $ embeds compactly into $ L^p(\Omega) $ for $ p < \frac{d}{d-1} $.
    \item[$\rm(ii)$] The total variation is lower semi-continuous with respect to the
convergence in $L^1(\Omega)$, i.e., if $\{v_n\}\subset BV(\Omega)$
and $v_n\to v$ in $L^1(\Omega)$, we have
\begin{equation*}
  |v|_{\rm TV} \leq\liminf_{n\to\infty} |v_n|_{\rm TV}.
\end{equation*}
\item[$\rm(iii)$] The space $C^\infty(\overline{\Omega})$ is dense in $BV(\Omega)$ with respect to the convergence in the intermediate sense.
\end{itemize}
\end{lemma}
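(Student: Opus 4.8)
All three assertions are classical; the plan is to obtain each from the dual (supremum) characterization of $|\cdot|_{\rm TV}$ stated above, together with mollification near the boundary. I would begin with Assertion (ii), which is the cheapest. For a fixed admissible field $\psi\in C_c^1(\Omega)^d$ with $|\psi(x)|\le 1$, the linear functional $v\mapsto \int_\Omega v\operatorname{div}\psi\,dx$ is continuous with respect to convergence in $L^1(\Omega)$, since $\operatorname{div}\psi\in C_c(\Omega)$ is bounded. Hence $v_n\to v$ in $L^1(\Omega)$ gives $\int_\Omega v\operatorname{div}\psi\,dx=\lim_{n\to\infty}\int_\Omega v_n\operatorname{div}\psi\,dx\le\liminf_{n\to\infty}|v_n|_{\rm TV}$ for every such $\psi$, and taking the supremum over $\psi$ yields $|v|_{\rm TV}\le\liminf_{n\to\infty}|v_n|_{\rm TV}$. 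In short, $|\cdot|_{\rm TV}$ is a supremum of $L^1$-continuous functionals, and is therefore lower semicontinuous.

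For Assertion (iii) I would use mollification. Fix $v\in BV(\Omega)$ and $\varepsilon>0$. Interior mollification already produces smooth functions converging in $L^1(\Omega)$ with total variation converging to $|v|_{\rm TV}$ on compactly contained subdomains, so the work is entirely at the boundary. The standard device (see, e.g., \cite{EvansGariepy:2015,AmbrosioFuscoPallara:2000}) is to choose a locally finite cover of $\Omega$ by an interior set together with a collar of boundary patches, a subordinate partition of unity, and mollification radii that shrink fast enough near $\Gamma$, and then to sum the mollified pieces. Lower semicontinuity from (ii) supplies the inequality $|v|_{\rm TV}\le\liminf|v_\varepsilon|_{\rm TV}$, while the explicit construction supplies the reverse bound $\limsup|v_\varepsilon|_{\rm TV}\le|v|_{\rm TV}$; together these give convergence of the total variations and hence convergence in the intermediate sense. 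I expect the boundary bookkeeping, namely matching the mollification scale to the partition of unity so that the cross terms arising from the product rule remain controlled, to be the main technical point, and it is precisely here that the regularity of $\Gamma$ enters.

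Finally, for the compact embedding in Assertion (i) I would proceed in three steps. First, establish the continuous embedding $BV(\Omega)\hookrightarrow L^{d/(d-1)}(\Omega)$: this holds for $v\in C^\infty(\overline\Omega)$ by the Gagliardo--Nirenberg--Sobolev inequality and extends to all of $BV(\Omega)$ by the strict-density result of (iii) combined with the lower semicontinuity of (ii). Second, show that a $BV$-bounded sequence is precompact in $L^1(\Omega)$: after extending to a fixed larger ball, uniform control of the translates of the mollified functions verifies the Fréchet--Kolmogorov criterion and produces an $L^1$-convergent subsequence. Third, upgrade $L^1$ convergence to $L^p$ convergence for $p<\frac{d}{d-1}$ via the interpolation inequality $\|w\|_{L^p(\Omega)}\le\|w\|_{L^1(\Omega)}^{1-\theta}\|w\|_{L^{d/(d-1)}(\Omega)}^{\theta}$, using the uniform $L^{d/(d-1)}$ bound from the first step to neutralize the second factor. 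The hard part throughout is the boundary: both the extension across $\Gamma$ in the second step and the boundary mollification in (iii) require $\Omega$ to be an extension domain, which is guaranteed here by the assumed compact Lipschitz boundary \cite[Proposition 3.21]{AmbrosioFuscoPallara:2000}.
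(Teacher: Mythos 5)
Your three sketches are correct and reproduce the standard textbook arguments. Note, however, that the paper itself offers no proof of this lemma: it simply cites \cite[Theorem 10.1.4]{Attouch2006} for (i), \cite[Theorem 5.2]{EvansGariepy:2015} for (ii), and \cite[Theorem 10.1.2]{Attouch2006} for (iii), together with the remark that (i) requires $\Omega$ to be an extension domain. So there is nothing in the paper to compare line by line; what you have written is essentially the content of those cited results. Your argument for (ii) (a supremum of $L^1$-continuous linear functionals is lower semicontinuous) is exactly the classical proof, and your outline for (iii) (interior mollification plus a boundary collar, partition of unity, and mollification radii shrinking toward $\Gamma$, with (ii) giving $\liminf\ge$ and the construction giving $\limsup\le$) is the Anzellotti--Giaquinta argument; you correctly locate where the Lipschitz boundary is needed. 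For (i), the three-step plan (continuous embedding into $L^{d/(d-1)}$, Fr\'echet--Kolmogorov compactness in $L^1$, interpolation) is the standard route. One small imprecision: in extending the Gagliardo--Nirenberg--Sobolev bound from smooth functions to $BV(\Omega)$, what you actually need on the left-hand side is Fatou's lemma (or weak lower semicontinuity of the $L^{d/(d-1)}$ norm) along an a.e.\ convergent subsequence, while the strict convergence controls the right-hand side via $|v_k|_{\rm TV}\to|v|_{\rm TV}$; the lower semicontinuity of the total variation from (ii) is not the ingredient doing the work there. This does not affect the validity of the argument.
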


\section{Properties of the AET forward map}\label{sec:forward}

In this section we establish continuity and differentiability of the forward map in AET. For sufficiently regular conductivities, such results are well-known in the literature, but the issues become more delicate when non-smooth conductivities are considered. 

For a fixed $ \lambda \in (0,1) $ we define the set
\begin{equation} \label{asmp:1}
    \mathcal{S}=\{\sigma: \lambda\leq \sigma\leq \lambda^{-1}\ \text{a.e. in } \Omega\}
    \tag{A.1}
\end{equation}
and we assume that $\sigma \in \mathcal S.$ This assumption is physically reasonable and in particular it implies that $ \sigma \in L^\infty(\Omega) $.
We write $(\mathcal{S},\tau)$ to denote the set endowed with a topology $\tau$.

Given $f\in H_\diamond^{-\frac12}(\Gamma)$ we consider the PDE problem
\begin{equation}\label{eqn:forward}
    \left\{\begin{aligned}
      -\nabla\cdot(\sigma\nabla u) & = 0, && \text{in $\Omega$},\\
      \sigma\frac{\partial u}{\partial\nu} & = f, && \text{in $ \Gamma $}.
    \end{aligned}\right.
\end{equation}
The weak formulation of problem \eqref{eqn:forward} is to find $u\in V$ such that
\begin{equation*}
    (\sigma\nabla u,\nabla \psi) = \langle f,\psi \rangle \quad \forall \psi\in V.
\end{equation*}
Clearly, $ f $ must satisfy the compatibility condition $ \langle f, 1 \rangle = 0 $, which we shall assume throughout the rest of the paper independent of the space in which $ f $ is taken. By the Poincar\'{e} inequality \eqref{eqn:poincare}, Lax-Milgram theorem implies that \eqref{eqn:forward} has a unique weak solution $ u = u(\sigma) \in V$ and
\begin{equation*}
    \|u(\sigma)\|_{H^1(\Omega)} \leq C\|f\|_{H^{-\frac12}(\Gamma)},
\end{equation*}
where the constant $C$ depends on $\lambda$ and the domain $\Omega$ but is independent of $\sigma$. For a fixed $f$, we shall suppress the dependence of the solution $u$ on $f$, and only indicate its dependence on $\sigma$ by writing $u(\sigma)$.

Next we recall a $W^{1,q}(\Omega)$ regularity result for elliptic problems; see \cite{gallouet1999} for a proof. This result will play an important role in the analysis below.
\begin{theorem}\label{thm:reg}
Let $\sigma \in \mathcal S$ and suppose $g\in L^q(\Omega)$, $h\in L^q(\Omega)^d$ and $f\in (W^{1-\frac1q,q}(\Gamma))'$ with $\int_\Gamma f\,ds + \int_\Omega g\,dx=0.$ Then there exists a constant $Q>2,$ such that for any $q\in(2,Q)$,   the problem 
\begin{equation*}
    \left\{\begin{aligned}
      -\nabla\cdot(\sigma\nabla u)&= g + \nabla \cdot h, && \text{in $ \Omega $,}\\
      \sigma\frac{\partial u}{\partial\nu} & = f, && \text{on $\Gamma$,}
    \end{aligned}\right.
\end{equation*}
has a unique weak solution $u(\sigma) \in W^{1,q}(\Omega)$ satisfying
\begin{equation*}
  \|u\|_{W^{1,q}(\Omega)} \leq C(\|f\|_{(W^{1-\frac1q,q}(\Gamma))'}+\|g\|_{L^q(\Omega)}+\|h\|_{L^q(\Omega)^d}).
\end{equation*}
The constant $Q = Q(\lambda,d)$ depends only on the domain $\Omega$, the spatial dimension $d$ and the constant $\lambda$ and the constant $C$ depends only on $\Omega$, $\lambda$, $d$ and $q$.
\end{theorem}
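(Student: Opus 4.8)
The plan is to prove the estimate by the classical Meyers perturbation argument: freeze the coefficient at a well-chosen constant, treat the variable part as a small $L^q$ perturbation, and exploit the fact that the range of admissible exponents opens up continuously around $q=2$. First I would reduce to a pure divergence-form source. The contributions of $g\in L^q(\Omega)$ and $f\in (W^{1-\frac1q,q}(\Gamma))'$ can be absorbed into a term $\nabla\cdot h$ by standard lifting --- a Bogovskii-type right inverse of the divergence to write $g$ (modulo its mean, handled separately) in divergence form with $L^q$ control, together with a Neumann lift of $f$ --- using the compatibility condition $\int_\Gamma f\,ds+\int_\Omega g\,dx=0$. It then suffices to bound $\nabla u$ in $L^q(\Omega)^d$ when $-\nabla\cdot(\sigma\nabla u)=\nabla\cdot h$. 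The baseline $q=2$ estimate is immediate from the weak formulation: testing with $u$ and using $\lambda\le\sigma\le\lambda^{-1}$ together with \eqref{eqn:poincare} gives $\|\nabla u\|_{L^2(\Omega)}\le\lambda^{-1}\|h\|_{L^2(\Omega)^d}$.

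For the core argument, fix the constant $a=\tfrac12(\lambda+\lambda^{-1})$ and let $T$ denote the gradient solution map $h\mapsto\nabla v$ for the reference problem $-\Delta v=\nabla\cdot h$ with homogeneous Neumann data; testing with $v$ shows $T$ is (up to sign) the Helmholtz projection onto gradients, so $\|T\|_{L^2\to L^2}=1$. Rewriting the equation as
\[
  -a\Delta u=\nabla\cdot\big(h+(\sigma-a)\nabla u\big)
\]
yields the fixed-point identity $\nabla u=a^{-1}Th+a^{-1}T\big((\sigma-a)\nabla u\big)$. Writing $C_q:=\|T\|_{L^q\to L^q}$, the perturbation operator $w\mapsto a^{-1}T((\sigma-a)w)$ has $L^q$-norm at most $C_q\,\|\sigma-a\|_{L^\infty}/a\le C_q\,k$, where the choice of $a$ minimises the contrast factor over $\mathcal S$ to
\[
  k=\frac{\lambda^{-1}-\lambda}{\lambda^{-1}+\lambda}=\frac{1-\lambda^2}{1+\lambda^2}<1 .
\]
Since $C_2=1$ we have $C_2k=k<1$, and since $q\mapsto C_q$ is continuous there exists $Q=Q(\lambda,d,\Omega)>2$ with $C_qk<1$ for all $q\in(2,Q)$. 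For such $q$ the operator $I-a^{-1}T((\sigma-a)\,\cdot)$ is invertible on $L^q(\Omega)^d$ via a convergent Neumann series, giving $\|\nabla u\|_{L^q}\le\frac{C_q}{a(1-C_qk)}\|h\|_{L^q}$. Reinstating $g$ and $f$ and adding the zero-mean normalisation through \eqref{eqn:poincare} produces the full $W^{1,q}$ bound, with $Q$ depending on $(\lambda,d,\Omega)$ through $k$ and the continuity threshold, and $C$ depending in addition on $q$, exactly as asserted.

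The main obstacle is the reference estimate itself: one must establish that the constant-coefficient Neumann Laplacian admits $L^q$ gradient bounds $\|T\|_{L^q\to L^q}=C_q$ on the Lipschitz domain $\Omega$, with $C_q$ continuous in $q$ and $C_2=1$. On smooth or convex domains this is standard Calder\'{o}n--Zygmund and Agmon--Douglis--Nirenberg theory, but for a merely Lipschitz boundary the $L^q$ solvability of the Neumann problem is delicate and holds only in a restricted range of exponents around $2$; this is precisely what confines $Q$ to a neighbourhood of $2$ and ties its size to $\Omega$. An alternative route that avoids invoking global $L^q$ boundary regularity is the Gehring self-improvement method: derive interior and boundary Caccioppoli inequalities, convert them via the Sobolev--Poincar\'{e} inequality into a reverse H\"{o}lder inequality for $|\nabla u|$, and apply Gehring's lemma to obtain higher integrability on a quantitatively small interval $(2,Q)$. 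I would present the perturbation argument as the primary proof and indicate the Gehring approach as the more robust variant for the Lipschitz setting; either way the exponent $Q$ and its dependence on $(\lambda,d,\Omega)$ emerge naturally from the contrast factor $k$ and the domain-dependent regularity threshold.
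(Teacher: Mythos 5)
The paper does not prove Theorem \ref{thm:reg} at all: it is quoted as a known regularity result and the proof is delegated entirely to the cited reference \cite{gallouet1999} (with \cite{Groger:1989} invoked in the accompanying remark for the behaviour of $Q$). Your Meyers-type perturbation argument --- freezing the coefficient at $a=\tfrac12(\lambda+\lambda^{-1})$, writing the fixed-point identity $\nabla u=a^{-1}Th+a^{-1}T((\sigma-a)\nabla u)$, and using $\|T\|_{L^2\to L^2}\le 1$ together with interpolation to get $C_qk<1$ for $q$ in a window $(2,Q)$ --- is precisely the classical route taken in that literature, so in substance you are reproducing the proof the paper points to rather than diverging from it. Your sketch is sound, and you correctly isolate the one genuinely nontrivial ingredient: $L^q$ solvability of the constant-coefficient Neumann problem on a merely Lipschitz domain for some $q_0>2$ (needed both for $\|T\|_{L^q\to L^q}<\infty$ near $2$ and for the lifting of $f\in(W^{1-\frac1q,q}(\Gamma))'$ into a divergence-form source), which is exactly what confines $Q$ to a neighbourhood of $2$ and makes it domain-dependent. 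Two small points to tidy up if you write this out in full: the ``homogeneous Neumann'' reference problem should be understood in the weak form $(\nabla v,\nabla\phi)=-(h,\nabla\phi)$ (i.e.\ the conormal condition absorbing the flux of $h$), so that compatibility is automatic; and the contraction on $L^q(\Omega)^d$ produces a fixed point that you must identify with the Lax--Milgram solution's gradient, which follows from uniqueness of the fixed point in $L^2\supset L^q$ since $C_2k<1$.
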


\begin{remark}
The parameter $Q$ depends on the regularity of the domain $\Omega$. If the domain $\Omega$ is of class $C^1$, then $Q(\lambda,d)\rightarrow\infty$ as $\lambda\rightarrow1$ \cite{Groger:1989}. For a general Lipschitz domain, e.g., polyhedrons, there also always exists some $Q(\lambda,d)>2$
for any $\lambda<1$ \cite[Section 5]{Groger:1989}. In the two-dimensional case, the optimal exponent $Q$ was discussed in \cite{AstalaEaracoSzekelyhidi:2008,NesiPalombaro:2014}.
\end{remark}


\subsection{The continuity and differentiability of the solution map}

In order to analyze the forward map $\sigma \mapsto H(\sigma),$ we first address the continuity of the solution operator $\sigma\mapsto u(\sigma)$. This map is identical with that for EIT and has been extensively studied in various function spaces \cite{chen_augmented_1999,JinMaass:2012,DunlopStuart:2016}. Hence, we only sketch the proof for the convenience of readers.

\begin{lemma} \label{lem:cont-of-sol-op}
Let $ \{\sigma_k\}\subset \mathcal{S} $ satisfy $ \sigma_k \to \sigma^\ast $ in $ L^1(\Omega)$. Then the following statements hold.
\begin{itemize}
    \item[$\rm (i)$] If $f\in H_\diamond^{-\frac12}(\Gamma)$, then there exists a subsequence of $\{u(\sigma_k)\}$ convergent to $u(\sigma^*)$ in $H^1(\Omega)$.
    \item[$\rm(ii)$] If $f\in (W^{1-\frac{1}{r},r}(\Gamma))'$ for some $r>2$, then $u(\sigma_k)\to u(\sigma^*)$ in $W^{1,q}(\Omega)$ for any $q\in(2,\min(Q,r))$.
\end{itemize}
\end{lemma}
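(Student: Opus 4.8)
The plan is to treat the two parts with the same two ingredients: a uniform a priori bound together with weak compactness to extract a convergent subsequence, and then an energy/duality argument to upgrade weak convergence to the asserted strong convergence. Throughout I will use that membership in $\mathcal S$ forces $\lambda\le\sigma_k\le\lambda^{-1}$, so that the $L^1(\Omega)$ convergence $\sigma_k\to\sigma^*$, combined with the uniform $L^\infty(\Omega)$ bound, yields (after passing to a subsequence, not relabelled) pointwise a.e.\ convergence $\sigma_k\to\sigma^*$ and hence, by dominated convergence, $\sigma_k\to\sigma^*$ in every $L^p(\Omega)$, $p<\infty$. In particular $\sigma^*\in\mathcal S$.

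For part $\rm(i)$, first I would invoke the Lax--Milgram estimate recorded after \eqref{eqn:forward}, giving $\|u(\sigma_k)\|_{H^1(\Omega)}\le C\|f\|_{H^{-1/2}(\Gamma)}$ with $C=C(\lambda,\Omega)$ independent of $k$. By reflexivity of $H^1(\Omega)$ and weak closedness of $V$, a subsequence satisfies $u(\sigma_k)\rightharpoonup u^*$ in $H^1(\Omega)$ for some $u^*\in V$. The crux is to pass to the limit in the bilinear term of the weak form $(\sigma_k\nabla u(\sigma_k),\nabla\psi)=\langle f,\psi\rangle$. I would write this as $(\nabla u(\sigma_k),\sigma_k\nabla\psi)$ and observe that for fixed $\psi\in V$ one has $\sigma_k\nabla\psi\to\sigma^*\nabla\psi$ strongly in $L^2(\Omega)^d$ (dominated convergence, using $|\sigma_k-\sigma^*|\le2\lambda^{-1}$ and $\nabla\psi\in L^2$), while $\nabla u(\sigma_k)\rightharpoonup\nabla u^*$ weakly; the weak--strong pairing then gives $(\sigma^*\nabla u^*,\nabla\psi)=\langle f,\psi\rangle$, so $u^*=u(\sigma^*)$ by uniqueness. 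To upgrade to strong convergence I would exploit coercivity: with $w_k:=u(\sigma_k)-u^*$, $\lambda\|\nabla w_k\|_{L^2(\Omega)}^2\le(\sigma_k\nabla w_k,\nabla w_k)=\langle f,w_k\rangle-(\sigma_k\nabla u^*,\nabla w_k)$, and both terms on the right tend to $0$ (the first since $w_k\rightharpoonup0$ in $H^1(\Omega)$ and the trace is weakly continuous, the second by the same weak--strong argument as above). Poincar\'e's inequality \eqref{eqn:poincare} then yields $u(\sigma_k)\to u(\sigma^*)$ in $H^1(\Omega)$.

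For part $\rm(ii)$, the additional input is Theorem \ref{thm:reg}: since $f\in(W^{1-1/r,r}(\Gamma))'$ and $q<\min(Q,r)$, it provides $u(\sigma)\in W^{1,q}(\Omega)$ for every $\sigma\in\mathcal S$, with a bound uniform in $\sigma$; in particular $u^*=u(\sigma^*)\in W^{1,q}(\Omega)$, so $\nabla u^*\in L^q(\Omega)^d$. The key step is to note that $w_k=u(\sigma_k)-u^*$ itself solves an elliptic problem amenable to Theorem \ref{thm:reg}: subtracting the two weak formulations gives, for all $\psi\in V$, $(\sigma_k\nabla w_k,\nabla\psi)=-((\sigma_k-\sigma^*)\nabla u^*,\nabla\psi)$, which is exactly the weak form of the problem in Theorem \ref{thm:reg} with $g=0$, vanishing boundary datum, and $h=(\sigma_k-\sigma^*)\nabla u^*\in L^q(\Omega)^d$. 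Hence $\|w_k\|_{W^{1,q}(\Omega)}\le C\|(\sigma_k-\sigma^*)\nabla u^*\|_{L^q(\Omega)^d}$. Finally I would show the right-hand side tends to $0$ by dominated convergence: the integrand $|\sigma_k-\sigma^*|^q|\nabla u^*|^q$ converges to $0$ a.e.\ (along the a.e.-convergent subsequence) and is dominated by $(2\lambda^{-1})^q|\nabla u^*|^q\in L^1(\Omega)$. Since the limit $u(\sigma^*)$ is independent of the extracted subsequence, a standard subsequence--of--subsequence argument upgrades this to convergence of the full sequence.

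The main obstacle in both parts lies already in part $\rm(i)$: passing to the limit in the product $\sigma_k\nabla u(\sigma_k)$, a term bilinear in the two converging quantities, when only weak convergence of $\nabla u(\sigma_k)$ is available a priori. The resolution is to first upgrade the coefficient convergence from $L^1(\Omega)$ to strong $L^2(\Omega)$ (against a fixed gradient) via the uniform $L^\infty(\Omega)$ bound and dominated convergence, so that the weak--strong pairing applies. For part $\rm(ii)$, the corresponding delicate point is that the right-hand side of the difference equation is in divergence form with field $h=(\sigma_k-\sigma^*)\nabla u^*$; its $L^q$-integrability, and hence both the applicability of Theorem \ref{thm:reg} to $w_k$ and the concluding dominated-convergence step, rely essentially on the improved $W^{1,q}(\Omega)$ regularity $\nabla u^*\in L^q(\Omega)^d$ furnished by Theorem \ref{thm:reg}.
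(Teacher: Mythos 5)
Your proof is correct and follows essentially the same route as the paper: both rest on the error equation for $w_k=u(\sigma_k)-u(\sigma^*)$, coercivity (resp.\ Theorem \ref{thm:reg} applied with $g=0$, $h=(\sigma_k-\sigma^*)\nabla u^*$), and the fact that $\|(\sigma_k-\sigma^*)\nabla u^*\|_{L^q(\Omega)}\to 0$. The only differences are cosmetic: in (i) the paper tests the error equation with $w_k$ directly, so the weak-compactness and weak-limit-identification detour is unnecessary, and in (ii) it replaces your dominated-convergence step by the interpolation bound $\|\sigma_k-\sigma^*\|_{L^{p'}(\Omega)}\le\lambda^{1/p'-1}\|\sigma_k-\sigma^*\|_{L^1(\Omega)}^{1/p'}$ after H\"older's inequality, which yields convergence of the full sequence without your (correctly invoked) subsequence-of-subsequences argument.
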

\begin{proof}
For notational simplicity, we denote $u_k\equiv u(\sigma_k)$ and $u^*\equiv u(\sigma^*)$. Let $w_k=u_k-u^*$. It follows from the weak formulation for $u_k$ and $u^*$ that
\begin{equation}\label{eqn:err-eq}
    ({\sigma_k\nabla w_k,\nabla\phi}) = ({(\sigma^*-\sigma_k)\nabla u^*,\nabla \phi})\quad\forall\phi\in V.
\end{equation}
Now we discuss the two cases separately.

In case (i), letting $\phi=w_k\in V$ gives
\begin{equation*}
    \begin{aligned}
      ({\sigma_k\nabla w_k,\nabla w_k}) & = ({(\sigma^*-\sigma_k)\nabla u^*,\nabla w_k})\\
      & \leq \|\nabla w_k\|_{L^2(\Omega)}\|(\sigma_k-\sigma^*)\nabla u^*\|_{L^2(\Omega)}.
    \end{aligned}
\end{equation*}
By the standard measure theory,  convergence in $L^p(\Omega)$, $p\geq 1$, implies almost everywhere convergence up to a subsequence \cite[Theorem 1.21, p. 29]{EvansGariepy:2015}. Thus, one can extract a subsequence of $\{\sigma_k\}$, still denoted by $\{\sigma_k\}$, that converges almost everywhere in $\Omega$.
This, the trivial inequality $|\sigma_k-\sigma^*|\leq \lambda^{-1}$ a.e. and Lesbesgue's dominated convergence theorem \cite{EvansGariepy:2015} imply
\begin{equation*}
    \lim_{k\to\infty} \|(\sigma_k-\sigma^*)\nabla u^*\|_{L^2(\Omega)}=0.
\end{equation*}
This and the condition $\sigma_k\geq \lambda$ give the desired assertion.

In case (ii), by Theorem \ref{thm:reg}, for any $q\in (2,\min(Q,r))$, there holds
\begin{equation*}
    \|\nabla w\|_{L^{q}(\Omega)}\leq C\|(\sigma_k-\sigma^*)\nabla u^*\|_{L^q(\Omega)}.
\end{equation*}
Since $q<\min(Q,r)$, we can choose $q'\in(q,\min(Q,r))$
and by H\"{o}lder inequality, we obtain
\begin{equation*}
    \|(\sigma_k-\sigma^*)\nabla u^*\|_{L^q(\Omega)} \leq \|\sigma_k-\sigma^*\|_{L^{p'}(\Omega)}\|\nabla u^*\|_{L^{q'}(\Omega)},
\end{equation*}
where the exponent $p'$ satisfies $q'^{-1}+p'^{-1}=q^{-1}$. Since $\sigma_k,\sigma^*\in\mathcal{S}$, we have $|\sigma_k-\sigma^*|\leq \lambda^{-1}$ and thus
\begin{equation*}
    \|\sigma_k-\sigma^*\|_{L^{p'}(\Omega)} \leq \lambda^{\frac{1}{p'}-1}\|\sigma_k-\sigma^*\|_{L^1(\Omega)}^{\frac1{p'}},
\end{equation*}
from which it follows directly
\begin{equation*}
    \lim_{k\to\infty} \|\sigma_k-\sigma^*\|_{L^{p'}(\Omega)}=0.
\end{equation*}
This and the uniform bound on $\|\nabla u^*\|_{L^{q'}(\Omega)}$ from Theorem \ref{thm:reg} imply
\begin{equation*}
  \begin{aligned}
    0&\leq \lim_{k\to\infty}\|\nabla w_k\|_{L^q(\Omega)} \leq \lim_{k\to\infty}\lambda^{-1}\|\sigma_k \nabla w_k\|_{L^q(\Omega)}\\
    & \leq \lim_{k\to\infty} \lambda^{-1}\|(\sigma_k-\sigma^*)\nabla u^*\|_{L^q(\Omega)}=0.
  \end{aligned}
\end{equation*}
This completes the proof of the lemma.
\end{proof}

Next we turn to the differentiability of the solution operator $\sigma\mapsto u(\sigma)$ for a fixed $f$. These properties are important for deriving necessary optimality conditions and developing numerical algorithms. However, one needs to be cautious: although the set $\mathcal{S}$ has an interior point with respect to the $L^\infty(\Omega)$ norm, it does not have any interior point with respect to the $L^p(\Omega)$ norm for any $1\leq p<\infty$. In the latter case, the results below have to be understood with respect to the relative topology. The next result gives the formula for the directional derivative $u'(\sigma)[\kappa]$ of $u(\sigma)$ at $\sigma$ in the direction $\kappa\in L^\infty(\Omega)$.

\begin{lemma}\label{lem:direc-derivative}
For $ \sigma \in\mathcal{S}$, the directional derivative $u'(\sigma)[\kappa]$ solves
\begin{equation} \label{eq:pde-du}
({\sigma\nabla u'(\sigma)[\kappa],\nabla \phi}) = -({\kappa\nabla u(\sigma),\nabla \phi}) \quad \forall \phi\in V.
\end{equation}
The map $u'(\sigma)[\cdot]: L^\infty(\Omega)\to V$ is continuous. If $f\in(W^{1-\frac{1}{r},r}(\Gamma))'$ for some $r>2$, then for any $q\in (2,\min(Q,r))$ and $p>\frac{q\min(Q,r)}{\min(Q,r)-q}$, $u'(\sigma)[\cdot]: L^p(\Omega)\to W^{1,q}(\Omega)$ is continuous.

Further, the following statements hold.
\begin{itemize}
\setlength\itemsep{-.25em}
\item[$\rm (i)$] If $f\in H_\diamond^{-\frac{1}{2}}(\Gamma)$, then $u: (\mathcal{S}, L^\infty(\Omega)) \to H^1(\Omega) $ is Fr\'echet differentiable.
\item[$\rm(ii)$] If $f\in (W^{1-\frac1r,r}(\Gamma))'$ for some $r>2$, then $u: (\mathcal{S},L^p(\Omega)) \to W^{1,q}(\Omega) $ is Fr\'echet differentiable for any $q\in (2,\min(Q,r))$ and $p>\frac{q\min(Q,r)}{\min(Q,r)-q}$.
\end{itemize}
\end{lemma}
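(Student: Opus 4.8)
The plan is to construct the candidate derivative as the unique solution of \eqref{eq:pde-du}, establish its continuous (indeed bounded linear) dependence on $\kappa$, and then confirm the Fr\'echet property by showing that the linearization remainder is of higher order. For existence, fix $\sigma\in\mathcal S$: the bilinear form $(\sigma\nabla\cdot,\nabla\cdot)$ is coercive on $V$ by \eqref{eqn:poincare} together with $\sigma\geq\lambda$, and $\phi\mapsto-(\kappa\nabla u(\sigma),\nabla\phi)$ is a bounded linear functional on $V$ whenever $\kappa\in L^\infty(\Omega)$, so Lax--Milgram yields a unique $u'(\sigma)[\kappa]\in V$; since the right-hand side is linear in $\kappa$, the map $\kappa\mapsto u'(\sigma)[\kappa]$ is linear. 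Testing \eqref{eq:pde-du} with $\phi=u'(\sigma)[\kappa]$ gives $\|\nabla u'(\sigma)[\kappa]\|_{L^2(\Omega)}\leq\lambda^{-1}\|\kappa\|_{L^\infty(\Omega)}\|\nabla u(\sigma)\|_{L^2(\Omega)}$, so $u'(\sigma)[\cdot]\colon L^\infty(\Omega)\to V$ is bounded, hence continuous. For the $W^{1,q}(\Omega)$ statement I would read \eqref{eq:pde-du} as an equation of the type in Theorem \ref{thm:reg} with datum $h=\kappa\nabla u(\sigma)$, obtaining $\|u'(\sigma)[\kappa]\|_{W^{1,q}(\Omega)}\leq C\|\kappa\nabla u(\sigma)\|_{L^q(\Omega)}$, and then split by H\"older as $\|\kappa\nabla u(\sigma)\|_{L^q(\Omega)}\leq\|\kappa\|_{L^p(\Omega)}\|\nabla u(\sigma)\|_{L^{q^*}(\Omega)}$ with $q^{*-1}=q^{-1}-p^{-1}$; the hypothesis $p>\frac{q\min(Q,r)}{\min(Q,r)-q}$ is exactly what forces $q^*<\min(Q,r)$, whence $\nabla u(\sigma)\in L^{q^*}(\Omega)$ by Theorem \ref{thm:reg} and $u'(\sigma)[\cdot]\colon L^p(\Omega)\to W^{1,q}(\Omega)$ is bounded, hence continuous.

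For the differentiability assertions I would write $\tilde u=u(\sigma+\kappa)$, $u=u(\sigma)$ and $w=\tilde u-u-u'(\sigma)[\kappa]$, restricting to perturbations with $\sigma+\kappa\in\mathcal S$ so that the relative topology is in force and $\kappa\in L^\infty(\Omega)$ is uniformly bounded. Subtracting the weak forms of $\tilde u$ and $u$ gives $(\sigma\nabla(\tilde u-u),\nabla\phi)=-(\kappa\nabla\tilde u,\nabla\phi)$; substituting the definition of $w$ and using \eqref{eq:pde-du} to cancel the first-order terms, $w$ is seen to solve
\begin{equation*}
(\sigma\nabla w,\nabla\phi)=-(\kappa\nabla(\tilde u-u),\nabla\phi)\qquad\forall\phi\in V.
\end{equation*}
The essential feature is that the right-hand side is now quadratically small, pairing the perturbation $\kappa$ against the solution difference $\tilde u-u$, both of which vanish as $\kappa\to0$.

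In case (i), testing the remainder equation with $\phi=w$ yields $\lambda\|\nabla w\|_{L^2(\Omega)}\leq\|\kappa\|_{L^\infty(\Omega)}\|\nabla(\tilde u-u)\|_{L^2(\Omega)}$. The same energy argument applied to the equation for $\tilde u-u$, together with the uniform a priori bound on $\|\nabla\tilde u\|_{L^2(\Omega)}$, supplies the Lipschitz estimate $\|\nabla(\tilde u-u)\|_{L^2(\Omega)}\leq C\|\kappa\|_{L^\infty(\Omega)}$. Combining the two and invoking \eqref{eqn:poincare} gives $\|w\|_{H^1(\Omega)}\leq C\|\kappa\|_{L^\infty(\Omega)}^2=o(\|\kappa\|_{L^\infty(\Omega)})$, which with the boundedness of $u'(\sigma)[\cdot]$ established above proves Fr\'echet differentiability into $H^1(\Omega)$.

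In case (ii), I would apply Theorem \ref{thm:reg} to the remainder equation with $h=\kappa\nabla(\tilde u-u)$ and H\"older with the same $q^*$, getting $\|w\|_{W^{1,q}(\Omega)}\leq C\|\kappa\|_{L^p(\Omega)}\|\nabla(\tilde u-u)\|_{L^{q^*}(\Omega)}$. The main obstacle is precisely the exponent bookkeeping: the H\"older split closes only when $q^*<\min(Q,r)$, which is what the hypothesis on $p$ secures, and only then does Lemma \ref{lem:cont-of-sol-op}(ii) apply to give $\|\nabla(\tilde u-u)\|_{L^{q^*}(\Omega)}\to0$ as $\kappa\to0$ in $L^p(\Omega)$ (hence in $L^1(\Omega)$). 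Consequently $\|w\|_{W^{1,q}(\Omega)}\leq\|\kappa\|_{L^p(\Omega)}\cdot o(1)=o(\|\kappa\|_{L^p(\Omega)})$, and combined with the continuity of $u'(\sigma)[\cdot]\colon L^p(\Omega)\to W^{1,q}(\Omega)$ this establishes Fr\'echet differentiability in the relative topology. In both cases the verified remainder estimate confirms that the solution of \eqref{eq:pde-du} is genuinely the derivative, so \eqref{eq:pde-du} indeed characterizes the directional derivative $u'(\sigma)[\kappa]$.
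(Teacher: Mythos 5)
Your proof is correct, and it reaches the same key estimates as the paper, but the route through the differentiability argument is organized differently in a few places worth noting. The paper first \emph{derives} the weak formulation \eqref{eq:pde-du} as the limit of difference quotients $h(t)=t^{-1}(u(\sigma+t\kappa)-u(\sigma))$, using Lemma \ref{lem:cont-of-sol-op} to pass to the limit, whereas you \emph{define} the candidate derivative by Lax--Milgram and let the verified remainder estimate certify a posteriori that it is the derivative; both are legitimate, and yours is arguably tidier. For the remainder itself the paper works with $((\sigma+\kappa)\nabla w,\nabla\phi)=(\kappa\nabla u'(\sigma)[\kappa],\nabla\phi)$ and closes the $o(\|\kappa\|_{L^p})$ bound by estimating $\|\kappa\nabla u'(\sigma)[\kappa]\|_{L^q}\le c\|\kappa\|_{L^{p'}}\|\kappa\|_{L^p}$ together with the interpolation inequality $\|\kappa\|_{L^{p'}}\le \lambda^{\frac{p}{p'}-1}\|\kappa\|_{L^p}^{\frac{p}{p'}}$ coming from the $L^\infty$ bound on $\mathcal S$; you instead use the equivalent equation $(\sigma\nabla w,\nabla\phi)=-(\kappa\nabla(u(\sigma+\kappa)-u(\sigma)),\nabla\phi)$ and let Lemma \ref{lem:cont-of-sol-op}(ii) supply the $o(1)$ factor $\|\nabla(u(\sigma+\kappa)-u(\sigma))\|_{L^{q^*}}\to0$. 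The paper's version yields an explicit rate $\|\kappa\|_{L^p}^{1+p/p'}$, while yours leans on the continuity lemma (which, as its proof shows, is itself quantitative, so nothing is lost). Your exponent bookkeeping ($q^{*-1}=q^{-1}-p^{-1}$ and the equivalence of $p>\frac{q\min(Q,r)}{\min(Q,r)-q}$ with $q^*<\min(Q,r)$) matches the paper's choice of $q'$ and $p'$. Finally, you prove assertion (i) directly via the energy estimate $\|w\|_{H^1(\Omega)}\le C\|\kappa\|_{L^\infty(\Omega)}^2$, whereas the paper simply cites the literature for that part; your self-contained argument is a small bonus.
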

\begin{proof}

The expression of the directional derivative follows from straightforward computation. Specifically,  let $ h(t) = \frac{1}{t}(u(\sigma + t\kappa) - u(\sigma)) $, $ t > 0 $. Assume that $ t $ is sufficiently small $ t $ so that $ \lambda/2 < \lambda - t\|\kappa\|_{L^\infty} $. Now by the weak formulation for $u(\sigma+t\kappa)$ and $u(\sigma)$, i.e.,
\begin{equation*}
    \begin{aligned}
        ({(\sigma+t\kappa)\nabla u(\sigma+t\kappa),\nabla\phi}) & = ({f,\phi})_{L^2(\Gamma)}\quad \forall \phi\in V,\\
        ({\sigma\nabla u(\sigma),\nabla\phi}) & = ({f,\phi})_{L^2(\Gamma)}\quad \forall \phi\in V.
    \end{aligned}
\end{equation*}
Taking the difference between the two equations and appealing to the definition of $h(t)$ yield
\begin{equation}\label{eqn:diff}
   (\sigma\nabla h(t),\nabla\phi) = - (\kappa\nabla u(\sigma+t\kappa),\nabla\phi)\quad \forall\phi\in V.
\end{equation}
Letting $\phi=h(t)$ shows that $h(t)$ is uniformly bounded in $H^1(\Omega)$ as $t\to 0^+$. Further, we have $\|(\sigma+t\kappa)-\sigma\|_{L^p(\Omega)}=\lim_{t\to0^+}t\|\kappa\|_{L^p(\Omega)}=0$, for any $p>0$. By Lemma \ref{lem:cont-of-sol-op}, we deduce
\begin{equation*}
    \lim_{t\to0^+}\|u(\sigma+t\kappa)-u(\sigma)\|_{H^1(\Omega)} = 0.
\end{equation*}
Thus letting $t\to0^+$ in \eqref{eqn:diff} yields directly the weak formulation for $u'(\sigma)[\kappa]$. The bound and continuity of the map $u'(\sigma)[\kappa]:L^\infty(\Omega)\to V$ follows similarly.

Now if $f\in (W^{1-\frac{1}{r},r}(\Gamma))'$, by Theorem \ref{thm:reg}, $u(\sigma)\in W^{1,s}(\Omega)$ for any $s\in [2,\min(Q,r))$. Thus, for any $q\in [2,\min(Q,r))$, we can choose any $q'\in(q,\min(Q,r))$, and $p^{\prime-1}+q^{\prime-1}=q^{-1}$, by H\"{o}lder's inequality, there holds
\begin{equation*}
    \|\kappa\nabla u(\sigma)\|_{L^q(\Omega)} \leq \|\kappa\|_{L^{p'}(\Omega)}\|\nabla u(\sigma)\|_{L^{q'}(\Omega)}.
\end{equation*}
Then by Theorem \ref{thm:reg}, the solution $u'(\sigma)[\kappa]$ to \eqref{eqn:diff} belongs to $W^{1,q}(\Omega)$ and
\begin{equation*}
    \|u'(\sigma)[\kappa]\|_{W^{1,q}(\Omega)}\leq c\|\kappa\|_{L^{p'}(\Omega)}\|\nabla u(\sigma)\|_{L^{q'}(\Omega)}.
\end{equation*}
This shows the boundness of the map $u'(\sigma)[\kappa]:L^p(\Omega)\to W^{1,q}(\Omega)$. Since the value of $q'$ can be made arbitrarily close to $\min(Q,r)$, the desired continuity holds for any $p>\frac{q\min(Q,r)}{\min(Q,r)-q}$.

Next we turn to Fr\'{e}chet differentiability. Assertion (i) is known; see, e.g., \cite{kuchment_stabilizing_2012}. Thus, we only prove part (ii).
Let $w(\sigma,\kappa) = u(\sigma + \kappa) - u(\sigma) - u'(\sigma)[\kappa]$. By the preceding argument, for any $q\in[2,\min(Q,r))$, the map $u'(\sigma)[\kappa]: L^p(\Omega)\to W^{1,q}(\Omega)$ is bounded, for any $p>\frac{q\min(Q,r)}{\min(Q,r)-q}$. It suffices to show
\begin{equation*}
\lim_{\|\kappa\|_{L^p(\Omega)}\to 0}\frac{\|w(\sigma,\kappa)\|_{W^{1,q}(\Omega)}}{\|\kappa\|_{L^p(\Omega)}} = 0,
\end{equation*}
Next we take a sufficiently small $ \kappa $ (with $\sigma+\kappa\in\mathcal{S}$). Then the residual $w(\sigma,\kappa)$ satisfies
\begin{equation}\label{eq:pde-rem}
    ({(\sigma+\kappa)\nabla w(\sigma,\kappa),\nabla \phi}) = ({\kappa \nabla u'(\sigma)[{\kappa}],\nabla \phi})\quad \forall \phi\in V.
\end{equation}
By the preceding argument, we have \begin{equation*}
    \|u'(\sigma)[\kappa]\|_{W^{1,q'}(\Omega)}\leq c\|\kappa\|_{L^p(\Omega)},
\end{equation*}
where the exponent $q'\in (q, \min(Q,r))$ is sufficiently close to $q$. Therefore, by choosing $p'$ large such that $p^{\prime-1}+q^{\prime-1}=q$ and H\"{o}lder's inequality, there holds
\begin{equation*}
  \begin{aligned}
    \|\kappa\nabla u'(\sigma)[\kappa]\|_{L^q(\Omega)} & \leq \|\kappa\|_{L^{p'}(\Omega)}\|u'(\sigma)[\kappa]\|_{L^{q'}(\Omega)}\\
    &\leq c\|\kappa\|_{L^{p'}(\Omega)}\|\kappa\|_{L^p(\Omega)}.
 \end{aligned}
\end{equation*}
This and Theorem \ref{thm:reg} imply
\begin{equation*}
 \|w(\sigma,\kappa)\|_{W^{1,q}(\Omega)}\leq c\|\kappa\nabla u'(\sigma)[\kappa]\|_{L^q(\Omega)}
\leq c\|\kappa\|_{L^{p'}(\Omega)}\|\kappa\|_{L^p(\Omega)}.
\end{equation*}
Now the desired assertion follows, in view of the inequality $\|\kappa\|_{L^{p'}(\Gamma)}\leq \lambda^{\frac{p}{p'}-1}\|\kappa\|_{L^p(\Omega)}^{\frac{p}{p'}}$.
\end{proof}

\begin{remark}
If $f\in H_\diamond^{-\frac12}(\Gamma)  $, the map $u(\sigma): L^\infty(\Omega)\to H^1(\Omega)$ is holomorphic \cite[p. 23]{CohenDevore:2015}.
Lemma \ref{lem:direc-derivative} gives a slightly stronger result on the first derivative under the condition  $f\in (W^{1-\frac1r,r}(\Omega))'$.
Our discussions have focused on the $L^p(\Omega)$ spaces, which is suitable for low-regularity penalties, e.g., total variation and $H^{1}(\Omega)$ penalty. More smoothing penalties, e.g., $H^2(\Omega)$, are also adopted  \cite{Bal2012}. Generally, if $ \Omega $ has a $ C^{k+1} $-boundary $\Gamma$, $\sigma,\kappa \in C^k\left(\overline{\Omega}\right) $ and $ f \in H^{k-\frac{1}{2}}(\Gamma)$ with $ k > \frac{d}{2}$, then $ u : H^k(\Omega) \to H^{k+1}(\Omega) $ is also Fr\'echet differentiable. This can be proved similarly \cite{Bal2012}.
\end{remark}

\subsection{The continuity and differentiability of the forward map}

The next result gives the continuity of the forward map $H(\sigma)$. It follows directly from Lemma \ref{lem:cont-of-sol-op}.

\begin{lemma}\label{lem:H-cont}
Let $\{\sigma_k\}\subset\mathcal{S}$ be convergent to $\sigma^*\in\mathcal{S}$ in $L^1(\Omega)$. Then the following statements hold.
\begin{itemize}
    \item[$\rm (i)$] If $f\in  H_\diamond^{-\frac12}(\Gamma)$, then there exists a subsequence of $\{H(\sigma_k)\}$ converging to  $H(\sigma^*)$ in $L^1(\Omega)$.
    \item[$\rm(ii)$] If $f\in (W^{1-\frac1r,r}(\Gamma))'$ for some $r>2$, then $H(\sigma_k)\to H(\sigma^*)$ in $L^{\frac{q}2}(\Omega)$ for any $q\in[2,\min(Q,r))$.
\end{itemize}
\end{lemma}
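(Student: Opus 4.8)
The plan is to exploit the algebraic structure $H(\sigma)=\sigma|\nabla u(\sigma)|^2$ and reduce the claim to the convergence of the state already established in Lemma \ref{lem:cont-of-sol-op}. Writing $u_k\equiv u(\sigma_k)$ and $u^*\equiv u(\sigma^*)$, I would first record the decomposition
\begin{equation*}
H(\sigma_k) - H(\sigma^*) = \sigma_k\bigl(|\nabla u_k|^2 - |\nabla u^*|^2\bigr) + (\sigma_k - \sigma^*)|\nabla u^*|^2,
\end{equation*}
together with the difference-of-squares identity $|\nabla u_k|^2 - |\nabla u^*|^2 = (\nabla u_k - \nabla u^*)\cdot(\nabla u_k + \nabla u^*)$. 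The uniform bound $\lambda\le\sigma_k\le\lambda^{-1}$ then lets me control the first (``state'') term by the convergence of $\nabla u_k$, while the second (``coefficient'') term is governed by the convergence $\sigma_k\to\sigma^*$ and the integrability of $|\nabla u^*|^2$.

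For part (i), I pass to the subsequence furnished by Lemma \ref{lem:cont-of-sol-op}(i), along which $u_k\to u^*$ in $H^1(\Omega)$; refining once more I may assume $\sigma_k\to\sigma^*$ a.e. For the state term, $\sigma_k\le\lambda^{-1}$ and Cauchy--Schwarz give
\begin{equation*}
\bigl\|\sigma_k(|\nabla u_k|^2 - |\nabla u^*|^2)\bigr\|_{L^1(\Omega)} \le \lambda^{-1}\|\nabla u_k - \nabla u^*\|_{L^2(\Omega)}\,\|\nabla u_k + \nabla u^*\|_{L^2(\Omega)},
\end{equation*}
which vanishes because the first factor tends to $0$ while the second stays bounded. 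For the coefficient term, the integrand is dominated pointwise by $\lambda^{-1}|\nabla u^*|^2\in L^1(\Omega)$, so Lebesgue's dominated convergence theorem applies along the a.e.-convergent subsequence. Adding the two bounds yields $H(\sigma_k)\to H(\sigma^*)$ in $L^1(\Omega)$ along the subsequence.

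For part (ii), Lemma \ref{lem:cont-of-sol-op}(ii) gives the full convergence $u_k\to u^*$ in $W^{1,q}(\Omega)$, so no subsequence is needed. The state term is treated exactly as above, but now with the H\"older pairing $\tfrac2q=\tfrac1q+\tfrac1q$, giving
\begin{equation*}
\bigl\|\sigma_k(|\nabla u_k|^2 - |\nabla u^*|^2)\bigr\|_{L^{q/2}(\Omega)} \le \lambda^{-1}\|\nabla u_k - \nabla u^*\|_{L^q(\Omega)}\,\|\nabla u_k + \nabla u^*\|_{L^q(\Omega)} \to 0.
\end{equation*}
The coefficient term is the only genuinely new point: I would fix an exponent $q'\in(q,\min(Q,r))$, so that $u^*\in W^{1,q'}(\Omega)$ by Theorem \ref{thm:reg} and hence $|\nabla u^*|^2\in L^{q'/2}(\Omega)$, and apply H\"older with $s$ defined by $\tfrac2q=\tfrac1s+\tfrac2{q'}$ (a finite exponent with $s>1$, since $q\ge2$ forces $\tfrac2q-\tfrac2{q'}<1$) to obtain $\|(\sigma_k-\sigma^*)|\nabla u^*|^2\|_{L^{q/2}(\Omega)}\le\|\sigma_k-\sigma^*\|_{L^s(\Omega)}\,\|\nabla u^*\|_{L^{q'}(\Omega)}^2$. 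Interpolating the $L^1$ convergence against the uniform $L^\infty$ bound via $\|\sigma_k-\sigma^*\|_{L^s(\Omega)}^s\le\lambda^{-(s-1)}\|\sigma_k-\sigma^*\|_{L^1(\Omega)}$ then forces $\|\sigma_k-\sigma^*\|_{L^s(\Omega)}\to0$, and the argument closes.

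The only delicate point, and the step I would double-check carefully, is the bookkeeping of exponents in the coefficient term of part (ii): one must choose $q'$ strictly between $q$ and $\min(Q,r)$ so that $|\nabla u^*|^2$ has exactly the summability $L^{q'/2}$ dictated by the $W^{1,q'}$ regularity of $u^*$, while simultaneously keeping the residual exponent $s$ finite. Every remaining estimate is a routine consequence of Lemma \ref{lem:cont-of-sol-op} and the uniform ellipticity bounds defining $\mathcal{S}$.
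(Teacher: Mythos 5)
Your argument is correct and follows essentially the same route as the paper: the identical splitting $H(\sigma_k)-H(\sigma^*)=(\sigma_k-\sigma^*)|\nabla u^*|^2+\sigma_k(|\nabla u_k|^2-|\nabla u^*|^2)$, dominated convergence along an a.e.\ convergent subsequence plus Cauchy--Schwarz for part (i), and H\"older with an intermediate exponent $q'\in(q,\min(Q,r))$ combined with the $L^1$--$L^\infty$ interpolation of $\sigma_k-\sigma^*$ for part (ii); your exponent $s$ coincides with the paper's $p'=\frac{qq'}{2(q'-q)}$. If anything, your bookkeeping of the squared gradient norm $\|\nabla u^*\|_{L^{q'}(\Omega)}^2$ is slightly more careful than the paper's.
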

\begin{proof}
We split the difference $H(\sigma_k)-H(\sigma^*)$ into
\begin{equation}\label{eqn:H-split}
    H(\sigma_k)-H(\sigma^*)=
    (\sigma_k-\sigma^*)|\nabla u^*|^2+\sigma_k(|\nabla u_k|^2-|\nabla u^*|^2),
\end{equation}
and bound the two terms independently. 
We discuss case (i) and (ii) separately.

In case (i), by the $L^1(\Omega)$ convergence, we deduce that there exists a subsequence of $\{\sigma_k\}$, still relabelled as $\{\sigma_k\}$, that converges almost everywhere to $\sigma^*$. Since $|\sigma_k-\sigma^*|\leq\lambda^{-1}$, by Lebesgue's dominated convergence theorem and the condition $\sigma_k\to\sigma^*$ a.e., we deduce
\begin{equation*}
    \lim_{k\to\infty}\|(\sigma_k-\sigma^*)|\nabla u^*|^2\|_{L^1(\Omega)}=0.
\end{equation*}
Meanwhile, by the triangle inequality and H\"{o}lder's inequality, there holds
\begin{align*}
    \|\sigma_k(|\nabla u_k|^2-|\nabla u^*|^2)\|_{L^1(\Omega)}& \leq \lambda^{-1}\| (\nabla u_k-\nabla u^*)\cdot (\nabla u_k+\nabla u^*)\|_{L^1(\Omega)}\\
    &\leq \lambda^{-1}(\|\nabla u_k\|_{L^2(\Omega)}+\|\nabla u^*\|_{L^2(\Omega)})\|\nabla(u_k-u^*)\|_{L^2(\Omega)}.
\end{align*}
The right hand side also tends to zero, in view of Lemma \ref{lem:cont-of-sol-op}(i) and the fact that $\|\nabla u_k\|_{L^2(\Omega)}$ and $\|\nabla u^*\|_{L^2(\Omega)}$ are uniformly bounded independent of $k$.

The proof for case (ii) is similar. In the splitting \eqref{eqn:H-split}, for any $q\in[2,\min(Q,r))$, the first term is now bounded using H\"{o}lder's inequality
\begin{equation*}
    \|(\sigma_k-\sigma^*)|\nabla u^*|^2\|_{L^{\frac{q}{2}}(\Omega)} \leq \|\sigma_k-\sigma^*\|_{L^{p'}(\Omega)}\|\nabla u^*\|_{L^{q'}(\Omega)}
\end{equation*}
where the exponent $q'\in (q,\min(Q,r))$ and the exponent $p'=\frac{qq'}{2(q'-q)}<\infty$. Now the desired assertion follows from $\|\sigma_k-\sigma^*\|_{L^{p'}(\Omega)}\leq \lambda^{-1+1/p'}\|\sigma_k-\sigma^*\|_{L^1(\Omega)}^{1/p'}$, cf. the proof of Lemma \ref{lem:cont-of-sol-op}. Similarly, by the triangle inequality and H\"{o}lder's inequality, the second term can be bounded by
\begin{align*}
    \|\sigma_k(|\nabla u_k|^2-|\nabla u^*|^2)\|_{L^{\frac{q}{2}}(\Omega)}& \leq \lambda^{-\frac{q}{2}}\| (\nabla u_k-\nabla u^*)\cdot (\nabla u_k+\nabla u^*)\|_{L^{\frac{q}{2}}(\Omega)}\\
    &\leq \lambda^{-\frac{q}{2}}(\|\nabla u_k\|_{L^q(\Omega)}+\|\nabla u^*\|_{L^q(\Omega)})\|\nabla(u_k-u^*)\|_{L^q(\Omega)}.
\end{align*}
This and Lemma \ref{lem:cont-of-sol-op}(ii) complete the proof of the lemma.
\end{proof}

Last, we give the directional derivative $H'(\sigma)[\kappa]$ of the forward map
$H(\sigma)$ and its adjoint:
\begin{theorem}\label{prop:deriv-H}
The directional derivative $H'(\sigma)[{\kappa}]$ is given by
\begin{equation*}
 H'(\sigma)[{\kappa}] = \kappa|\nabla u(\sigma)|^2 + 2\sigma\nabla u(\sigma)\cdot\nabla u'(\sigma)[\kappa].
\end{equation*}
For any $q\in [2,Q)$ and $\kappa\in L^\infty(\Omega)$, if $f\in (W^{1-\frac{1}{q},q}(\Gamma))'$, then $H'(\sigma)[\kappa] \in L^{\frac q2}(\Omega),$
and the adjoint $H'(\sigma)^*\zeta$ is given by
\begin{equation*}
  H'(\sigma)^*\zeta = |\nabla u|^2\zeta + 2\nabla u\cdot \nabla v,
\end{equation*}
where $v=v(\zeta)$ solves the problem
\begin{equation*}
    ({\sigma\nabla v,\nabla \phi}) = -({\sigma\zeta\nabla u,\nabla \phi}) \quad \forall \phi\in V.
\end{equation*}
\end{theorem}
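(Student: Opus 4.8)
The plan is to handle the three assertions in turn: the derivative formula by the chain and product rules, the mapping property by Hölder bookkeeping, and the adjoint formula by the standard adjoint-state computation exploiting symmetry of the bilinear form $a_\sigma(w,\phi) = (\sigma\nabla w,\nabla \phi)$. For the derivative, I would write $H(\sigma) = \sigma|\nabla u(\sigma)|^2$ and differentiate the two factors separately. The factor $\sigma \mapsto \sigma$ is linear and contributes $\kappa|\nabla u|^2$. For the second factor I would invoke the Fr\'echet differentiability of $u$ from Lemma \ref{lem:direc-derivative}(ii), which gives $u(\sigma+\kappa) = u(\sigma) + u'(\sigma)[\kappa] + o(\|\kappa\|)$ in $W^{1,q}(\Omega)$, and compose it with the quadratic Nemytskii map $w \mapsto |\nabla w|^2$, which is differentiable from $W^{1,q}(\Omega)$ into $L^{q/2}(\Omega)$ with derivative $2\nabla u\cdot\nabla(\cdot)$. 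This yields the term $2\sigma\nabla u\cdot\nabla u'(\sigma)[\kappa]$, and adding the two contributions gives the claimed formula, the remainders combining to $o(\|\kappa\|)$ in $L^{q/2}(\Omega)$ via the product estimates used below.

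For the mapping property I would simply collect regularities. By Theorem \ref{thm:reg}, $u(\sigma)\in W^{1,q}(\Omega)$, so $\nabla u\in L^q(\Omega)^d$ and $|\nabla u|^2\in L^{q/2}(\Omega)$; since $\kappa\in L^\infty(\Omega)$, the first term lies in $L^{q/2}(\Omega)$. By Lemma \ref{lem:direc-derivative}, $u'(\sigma)[\kappa]\in W^{1,q}(\Omega)$, hence $\nabla u'(\sigma)[\kappa]\in L^q(\Omega)^d$, and Hölder's inequality with exponents $q,q$ gives $\nabla u\cdot\nabla u'(\sigma)[\kappa]\in L^{q/2}(\Omega)$; multiplication by $\sigma\in L^\infty(\Omega)$ preserves this. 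Therefore $H'(\sigma)[\kappa]\in L^{q/2}(\Omega)$.

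For the adjoint I would first secure the adjoint state: the right-hand side $\phi\mapsto -(\sigma\zeta\nabla u,\nabla\phi)$ is a bounded linear functional on $V$ once $\zeta$ is taken in the dual of the data space, so that $\sigma\zeta\nabla u\in L^2(\Omega)^d$ by Hölder; then Lax--Milgram (or Theorem \ref{thm:reg}) furnishes a unique $v=v(\zeta)\in V$. Pairing $H'(\sigma)[\kappa]$ with $\zeta$ and splitting gives $\langle H'(\sigma)[\kappa],\zeta\rangle = (\kappa|\nabla u|^2,\zeta) + 2(\sigma\zeta\nabla u,\nabla u'(\sigma)[\kappa])$. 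The cross term is the crux: I would eliminate the dependence on $u'(\sigma)[\kappa]$ by testing the adjoint equation for $v$ with $\phi = u'(\sigma)[\kappa]$ and the sensitivity equation \eqref{eq:pde-du} for $u'(\sigma)[\kappa]$ with $\phi = v$. Since $a_\sigma(\cdot,\cdot)$ is symmetric, the two left-hand sides coincide, and subtracting yields $(\sigma\zeta\nabla u,\nabla u'(\sigma)[\kappa]) = (\kappa\nabla u,\nabla v)$. Substituting back gives $\langle H'(\sigma)[\kappa],\zeta\rangle = (\kappa,\,|\nabla u|^2\zeta + 2\nabla u\cdot\nabla v)$, and reading off the factor multiplying $\kappa$ identifies $H'(\sigma)^*\zeta = |\nabla u|^2\zeta + 2\nabla u\cdot\nabla v$.

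The algebra is the routine Lagrangian swap; I expect the main obstacle to be the functional-analytic bookkeeping in the adjoint step, namely choosing the space for $\zeta$ so that the adjoint state $v$ genuinely exists and every duality pairing appearing in the identity is finite under the $W^{1,q}(\Omega)$ regularity of $u$, $u'(\sigma)[\kappa]$ and $v$. A secondary point requiring care is the rigorous justification of the chain rule for the quadratic Nemytskii map in the first step, which must be carried out in the $W^{1,q}$--$L^{q/2}$ scale rather than the classical smooth setting.
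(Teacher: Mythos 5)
Your proposal is correct and follows essentially the same route as the paper: the derivative formula by the product rule, the $L^{q/2}(\Omega)$ membership by combining Theorem \ref{thm:reg} (applied to both $u(\sigma)$ and $u'(\sigma)[\kappa]$) with H\"older's inequality, and the adjoint by the symmetric cross-testing of the sensitivity and adjoint equations to obtain $(\kappa\nabla u,\nabla v)=(\zeta\sigma\nabla u,\nabla u'(\sigma)[\kappa])$. The only difference is that you are more explicit about the Nemytskii chain rule and the choice of space for $\zeta$, points the paper leaves implicit.
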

\begin{proof}
The formula for $H'(\sigma)[\kappa]$ is direct to derive. By Theorem \ref{thm:reg}, $u(\sigma)\in W^{1,q}(\Omega)$. In the source term $-\nabla\cdot(\kappa\nabla u(\sigma))$, $\kappa\nabla u(\sigma)\in L^q(\Omega)$, and applying Theorem \ref{thm:reg} again gives
$u'(\sigma)[\kappa]\in W^{1,q}(\Omega)$. Then by H\"{o}lder's inequality, we obtain the first assertion. 

To get the adjoint $H'(\sigma)^*$, we employ the weak formulations for $u'(\sigma)[\kappa]$ and $v $, i.e.,
\begin{equation*}
\begin{aligned}
  ({\sigma\nabla u'(\sigma)[\kappa],\nabla\phi}) & = -({\kappa\nabla u(\sigma),\nabla\phi}) &&\forall\psi\in V,\\
  ({\sigma\nabla v,\nabla \phi}) & = -({\zeta\sigma\nabla u(\sigma),\nabla \phi}) &&\forall \phi\in V.
\end{aligned}
\end{equation*}
Thus, there holds
$
    ({\kappa\nabla u(\sigma),\nabla v}) = ({\zeta\sigma\nabla u(\sigma),\nabla u'(\sigma)[\kappa]}).
$
Consequently,
\begin{equation*}
    \begin{aligned}
      ({H'(\sigma)^*[\zeta],\kappa}) & = ({\zeta, H'(\sigma)[\kappa]})\\
      & = ({\zeta, \kappa|\nabla u(\sigma)|^2+2\sigma\nabla u(\sigma)\cdot\nabla u'(\sigma)[\kappa]})\\
      & = ({\zeta|\nabla u(\sigma)|^2,\kappa})
        +2({\nabla u(\sigma)\cdot\nabla v,\kappa}).
    \end{aligned}
\end{equation*}
By the definition of the adjoint, we obtain the formula for $H'(\sigma)^*[\zeta]$.
\end{proof}


The next result gives the compactness of the forward map $H(\sigma)$. Hence, the nonlinear inverse problem is indeed ill-posed, and regularization is needed for stable reconstruction.
\begin{theorem}\label{prop:compact}
If $f\in (W^{1-\frac1r,r}(\Gamma))'$ for some $r>2$, then the map $H(\sigma): (\mathcal{S},BV)\to L^1(\Omega)$ is compact.
\end{theorem}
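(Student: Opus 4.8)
The plan is to establish compactness in the usual sense for nonlinear maps, namely that $H$ sends every $BV$-bounded subset of $\mathcal{S}$ to a relatively compact subset of $L^1(\Omega)$; it suffices to argue sequentially. First I would take an arbitrary sequence $\{\sigma_k\}\subset\mathcal{S}$ with $\sup_k\|\sigma_k\|_{BV(\Omega)}<\infty$ and produce a subsequence whose image under $H$ converges in $L^1(\Omega)$. Since $\Omega$ is a Lipschitz domain, hence an extension domain, Lemma \ref{lem:BV-props}(i) gives the compact embedding $BV(\Omega)\hookrightarrow\hookrightarrow L^p(\Omega)$ for $p<\tfrac{d}{d-1}$; because $1<\tfrac{d}{d-1}$ for both $d=2$ and $d=3$, the embedding into $L^1(\Omega)$ is compact in particular. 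Hence there is a subsequence, still denoted $\{\sigma_k\}$, and a limit $\sigma^*\in L^1(\Omega)$ with $\sigma_k\to\sigma^*$ in $L^1(\Omega)$.

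Second, I would check that the limit is admissible, i.e.\ $\sigma^*\in\mathcal{S}$. This follows because $\mathcal{S}$ is closed under $L^1(\Omega)$ convergence: along a further subsequence $\sigma_k\to\sigma^*$ a.e.\ in $\Omega$, so the pointwise bounds $\lambda\leq\sigma_k\leq\lambda^{-1}$ pass to the limit and yield $\lambda\leq\sigma^*\leq\lambda^{-1}$ a.e. With $\sigma_k\to\sigma^*$ in $L^1(\Omega)$ and $\sigma^*\in\mathcal{S}$, the continuity result for the forward map applies: invoking Lemma \ref{lem:H-cont} (part (i), or part (ii) together with the embedding $L^{q/2}(\Omega)\hookrightarrow L^1(\Omega)$ on the bounded domain, which uses the hypothesis $f\in(W^{1-\frac1r,r}(\Gamma))'$), I can extract yet a further subsequence along which $H(\sigma_k)\to H(\sigma^*)$ in $L^1(\Omega)$. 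This exhibits the required convergent subsequence of $\{H(\sigma_k)\}$ and establishes compactness.

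The conceptual core is simply the factorization of $H$ through the compact embedding $BV(\Omega)\hookrightarrow\hookrightarrow L^1(\Omega)$ composed with the (sequential) $L^1$-continuity already proved in Lemma \ref{lem:H-cont}; there is no genuinely hard estimate left to carry out. The only points meriting care are that the embedding exponent $1$ indeed lies below $\tfrac{d}{d-1}$ in the admissible dimensions $d=2,3$, so that compactness into $L^1$ is available; that $\mathcal{S}$ is $L^1$-closed so the limit stays admissible and the continuity lemma is applicable; and the bookkeeping of passing to successive subsequences (first for the embedding, then for a.e.\ convergence, then for Lemma \ref{lem:H-cont}(i)), which is harmless since a subsequence of a convergent sequence keeps its limit. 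I would expect the verification that $\sigma^*\in\mathcal{S}$ to be the only step warranting explicit comment.
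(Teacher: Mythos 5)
Your proposal is correct and follows essentially the same route as the paper: compactness is obtained by composing the compact embedding $BV(\Omega)\hookrightarrow\hookrightarrow L^1(\Omega)$ from Lemma \ref{lem:BV-props}(i) with the $L^1$-continuity of $\sigma\mapsto H(\sigma)$ from Lemma \ref{lem:H-cont}. The only difference is that you spell out the subsequence bookkeeping and the $L^1$-closedness of $\mathcal{S}$, which the paper leaves implicit.
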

\begin{proof}
By Lemma \ref{lem:BV-props}, the space $BV(\Omega)$ embeds compactly into $L^1(\Omega)$. Then by Lemma \ref{lem:H-cont}, the map $\sigma\mapsto H(\sigma)$ is continuous from $(\mathcal{S},L^1(\Omega))$ to $L^1(\Omega)$, under the given regularity on $f$. Hence,
$H(\sigma): (\mathcal{S},BV(\Omega))\to L^1(\Omega)$ is compact.
\end{proof}

\section{Regularized problem} \label{sec:rec}

Now we discuss the well-posedness of the optimization problem arising in regularized reconstruction
by means of a total variation penalty. Like before, we focus our discussion on one single dataset, i.e., $n=1$,
since the extension to multiple datasets is easy. Let $ z = H(\sigma^\ast) $ be the power density data
corresponding to the true conductivity $ \sigma^\ast $, possibly corrupted by noise. The optimization problem reads:
\begin{align} \label{eq:opt:problem}
        \operatornamewithlimits{min}_{\sigma \in \mathcal{A}}\ &\big\{\mathcal{J}_\beta(\sigma) = \|H(\sigma) - z\|_{L^1(\Omega)} + \beta|\sigma|_{\rm TV}\big\}
    \end{align}
where $ H(\sigma) = \sigma |\nabla u(\sigma)|^2 $ is the forward map analyzed in Section \ref{sec:forward}, and $\beta>0$ is a regularization parameter controlling the tradeoff between the two terms. 
The admissible set $ \mathcal A $ is given by
\begin{equation*}
    \mathcal A := \{ \sigma \in BV(\Omega): \lambda \leq \sigma \leq \lambda^{-1}\ \text{a.e. in $ \Omega $} \} = BV(\Omega)\cap\mathcal S.
\end{equation*}

In the model \eqref{eq:opt:problem}, the $ L^1 $ fidelity is motivated by analytical considerations: for any $\sigma\in \mathcal{A}$, the power density $H(\sigma)$ is only ensured to be $L^1(\Omega)$,
but not the usual $L^2(\Omega)$, cf. Lemma \ref{lem:H-cont}.  Statistically speaking, $ L^1 $ fitting is
robust to outliers in the data, and popular for handling impulsive noise. 

Now we prove that the functional $\mathcal{J}_\beta $ is (sequentially) continuous with respect to
the intermediate convergence and that there exists a minimizer to problem \eqref{eq:opt:problem}.

\begin{lemma}\label{thm:slsc}
Let $ f \in (W^{1-\frac{1}{r},r}(\Gamma))'$ for some $r>2$, then $\mathcal{J}_\beta $ is continuous on $ \mathcal A $ in the intermediate topology of $ BV(\Omega) $.
\end{lemma}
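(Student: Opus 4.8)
The plan is to show that both terms of $\mathcal{J}_\beta$ are continuous along a sequence $\{\sigma_k\}\subset\mathcal{A}$ converging in the intermediate sense to $\sigma^\ast\in\mathcal{A}$, and then conclude by linearity. Recall that intermediate convergence means $\sigma_k\to\sigma^\ast$ in $L^1(\Omega)$ together with $|\sigma_k|_{\rm TV}\to|\sigma^\ast|_{\rm TV}$. The penalty term $\beta|\sigma|_{\rm TV}$ is therefore continuous \emph{by definition} of the intermediate topology: this is immediate and requires no work. The entire content of the lemma is thus the continuity of the fidelity term $\sigma\mapsto\|H(\sigma)-z\|_{L^1(\Omega)}$.

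First I would observe that intermediate convergence implies $L^1(\Omega)$ convergence, so from $\sigma_k\to\sigma^\ast$ in the intermediate sense we in particular have $\sigma_k\to\sigma^\ast$ in $L^1(\Omega)$. Under the hypothesis $f\in(W^{1-\frac1r,r}(\Gamma))'$ with $r>2$, Lemma \ref{lem:H-cont}(ii) then applies and yields $H(\sigma_k)\to H(\sigma^\ast)$ in $L^{q/2}(\Omega)$ for any $q\in[2,\min(Q,r))$. Choosing any admissible $q$ and noting that $q/2\geq 1$, the bounded domain $\Omega$ gives the continuous embedding $L^{q/2}(\Omega)\hookrightarrow L^1(\Omega)$, whence $H(\sigma_k)\to H(\sigma^\ast)$ in $L^1(\Omega)$ as well.

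Finally, the reverse triangle inequality
\begin{equation*}
\bigl|\,\|H(\sigma_k)-z\|_{L^1(\Omega)}-\|H(\sigma^\ast)-z\|_{L^1(\Omega)}\,\bigr|\leq\|H(\sigma_k)-H(\sigma^\ast)\|_{L^1(\Omega)}
\end{equation*}
shows that the fidelity term converges, so $\mathcal{J}_\beta(\sigma_k)\to\mathcal{J}_\beta(\sigma^\ast)$, which is the claim. The only mild subtlety worth flagging is that Lemma \ref{lem:H-cont}(ii) already delivers full-sequence (not merely subsequential) convergence under the stated regularity on $f$, so no diagonal-subsequence argument is needed here; this is precisely why the stronger hypothesis $f\in(W^{1-\frac1r,r}(\Gamma))'$ rather than $f\in H_\diamond^{-\frac12}(\Gamma)$ is assumed. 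There is no real obstacle: the lemma is essentially a packaging of the forward-map continuity established earlier, and the use of the intermediate topology is exactly what makes the total-variation term cost nothing.
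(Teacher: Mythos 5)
Your proof is correct and follows essentially the same route as the paper: intermediate convergence handles the total-variation term by definition, and Lemma \ref{lem:H-cont}(ii) gives convergence of the fidelity term. Your additional remarks (the embedding $L^{q/2}(\Omega)\hookrightarrow L^1(\Omega)$, the reverse triangle inequality, and why the hypothesis on $f$ yields full-sequence rather than subsequential convergence) simply make explicit what the paper leaves implicit.
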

\begin{proof}
Let $ \{\sigma_k\} \subset \mathcal{A} $ be a sequence convergent in the intermediate topology. Then by
definition it converges in $ L^1(\Omega) $ and $ |\sigma_k|_{\rm TV} \to |\sigma^\ast|_{\rm TV}$.
Meanwhile, by Lemma \ref{lem:H-cont}, $ H(\sigma_k) \to H(\sigma^\ast) $. Thus we obtain the desired continuity.
\end{proof}

\begin{theorem} \label{thm:minimizer}
For any $f\in  H_\diamond^{-\frac12}(\Gamma)$, Problem \eqref{eq:opt:problem} has at least one minimizer.
\end{theorem}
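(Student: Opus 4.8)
The plan is to apply the direct method of the calculus of variations. First I would observe that $\mathcal{J}_\beta \geq 0$ on $\mathcal{A}$, so the infimum $m := \inf_{\sigma\in\mathcal{A}}\mathcal{J}_\beta(\sigma)$ is finite and nonnegative, and I would fix a minimizing sequence $\{\sigma_k\}\subset\mathcal{A}$ with $\mathcal{J}_\beta(\sigma_k)\to m$. The goal is to extract a limit point in $\mathcal{A}$ and show it attains $m$.

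Next I would derive uniform bounds. Since each $\sigma_k\in\mathcal{S}$, the pointwise bounds $\lambda\leq\sigma_k\leq\lambda^{-1}$ hold a.e., so $\{\sigma_k\}$ is bounded in $L^\infty(\Omega)$ and hence in $L^1(\Omega)$ on the bounded domain $\Omega$. Because the fitting term is nonnegative and $\mathcal{J}_\beta(\sigma_k)$ is convergent (thus bounded), we get $\beta|\sigma_k|_{\rm TV}\leq\mathcal{J}_\beta(\sigma_k)\leq C$, giving a uniform bound on $|\sigma_k|_{\rm TV}$. Together these yield a uniform bound on $\|\sigma_k\|_{BV(\Omega)}$. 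By the compact embedding $BV(\Omega)\hookrightarrow L^1(\Omega)$ from Lemma \ref{lem:BV-props}(i) (with $p=1<\frac{d}{d-1}$), a subsequence, still denoted $\{\sigma_k\}$, converges to some $\sigma^*$ in $L^1(\Omega)$. Passing to a further subsequence I may assume $\sigma_k\to\sigma^*$ a.e., which preserves the box constraint $\lambda\leq\sigma^*\leq\lambda^{-1}$ a.e., so $\sigma^*\in\mathcal{S}$; lower semicontinuity of the total variation (Lemma \ref{lem:BV-props}(ii)) shows $|\sigma^*|_{\rm TV}<\infty$, so $\sigma^*\in BV(\Omega)$ and therefore $\sigma^*\in\mathcal{A}$.

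Finally I would establish lower semicontinuity of $\mathcal{J}_\beta$ along the sequence. For the penalty, $|\sigma^*|_{\rm TV}\leq\liminf_k|\sigma_k|_{\rm TV}$ by Lemma \ref{lem:BV-props}(ii). For the fitting term, under the weaker hypothesis $f\in H_\diamond^{-\frac12}(\Gamma)$, Lemma \ref{lem:H-cont}(i) supplies only subsequential convergence $H(\sigma_k)\to H(\sigma^*)$ in $L^1(\Omega)$. I would therefore extract one more subsequence along which this convergence holds, so that the reverse triangle inequality gives $\|H(\sigma^*)-z\|_{L^1(\Omega)}=\lim_k\|H(\sigma_k)-z\|_{L^1(\Omega)}$. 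Combining this with the lower semicontinuity of the total variation yields $\mathcal{J}_\beta(\sigma^*)\leq\liminf_k\mathcal{J}_\beta(\sigma_k)=m$, and since $\sigma^*\in\mathcal{A}$ forces $\mathcal{J}_\beta(\sigma^*)\geq m$, equality follows and $\sigma^*$ is a minimizer.

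The main obstacle is the subsequence bookkeeping in the fitting term: the weak hypothesis $f\in H_\diamond^{-\frac12}(\Gamma)$ only guarantees \emph{subsequential} $L^1$-convergence of $H(\sigma_k)$ via Lemma \ref{lem:H-cont}(i), so I must nest the subsequence extractions carefully and use the elementary fact that every subsequence of a minimizing sequence is still minimizing, ensuring that the objective value along the final subsequence continues to tend to $m$.
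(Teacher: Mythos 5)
Your proposal is correct and follows essentially the same route as the paper's proof: the direct method with the uniform $BV(\Omega)$ bound, the compact embedding of $BV(\Omega)$ into $L^1(\Omega)$ from Lemma \ref{lem:BV-props}(i), the subsequential convergence $H(\sigma_k)\to H(\sigma^*)$ from Lemma \ref{lem:H-cont}(i), and lower semicontinuity of the total variation. Your explicit handling of the nested subsequence extractions and the verification that $\sigma^*\in\mathcal{A}$ is in fact slightly more careful than the paper's own write-up.
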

\begin{proof}
Since $ \mathcal{J}_\beta $ is bounded from below by zero, there exists a minimizing sequence $ \{\sigma_k\} \subset \mathcal{A} $ such that
$\lim_{k\to \infty} \mathcal{J}_\beta(\sigma_k) = \inf_{\sigma\in\mathcal{A}} \mathcal{J}_\beta(\sigma). $
This implies $ |\sigma_k|_\textup{TV} \leq C $ for some $ C > 0 $. By the $ L^\infty $ bound on the
admissible set $ \mathcal{A} $ and boundedness of $ \Omega $, we have $ \|\sigma_k\|_{BV(\Omega)} \leq C $.
By the uniform bound of $ \sigma_k $ in $ BV(\Omega) $ and the compact embedding of $ BV(\Omega) $ into $ L^1(\Omega) $
in Lemma \ref{lem:BV-props}(i), there exists a subsequence, again relabeled as $ \sigma_k $, convergent to some
$ \sigma^\ast$ in $L^1(\Omega) $. Now by Lemma \ref{lem:H-cont}(i), 
$ H(\sigma_k) \to H(\sigma^\ast) $ in $L^1(\Omega)$ (possibly by first passing to an a.e. pointwise convergent subsequence). The rest of the proof follows as Lemma \ref{thm:slsc}, except that we only obtain $ |\sigma^\ast|_{\rm TV} \leq \liminf_{k\to \infty}|\sigma_k|_{\rm TV}$.
\end{proof}
\begin{remark}
Note that the existence of a minimizer requires only $f\in  H_\diamond^{-\frac{1}{2}}(\Gamma) $, when compared
with the sequential lower semi-continuity, since the proof does not require intermediately convergent
subsequences in $ BV(\Omega) $.
Now we briefly mention the optimality conditions for problem \eqref{eq:opt:problem}. 
By means of the adjoint method, formally we derive the following optimality system: with $\zeta := \frac{\sigma|\nabla u|^2 - z}{|\sigma|\nabla u|^2 - z|}$ and $u$ solving \eqref{eqn:forward}, the governing equation for $\sigma$ reads
\begin{align*}
  \left\{ \begin{aligned}
        -\nabla \cdot\left(|\nabla \sigma|^{-1}\nabla \sigma\right) &= -\beta^{-1}H'(\sigma)^\ast[\zeta] && \text{in $ \Omega $}, \\
        |\nabla \sigma|^{-1}\partial_\nu \sigma &= 0 && \text{on $ \Gamma $}.
    \end{aligned}\right. \label{eq:1-laplace}
\end{align*}
This is a 1-Laplace equation for $\sigma$. It is known that a BV solution to such problems exist only under certain
conditions on the source term and boundary condition \cite{Mercaldo2009}. 
\end{remark}

In view of the lower semicontinuity of the functional, it is straightforward to derive the following stability and consistency results. We refer to \cite{ScherzerGrasmair:2009,SchusterKaltenbacher:2012,ItoJin:2015} for a proof in the general case.

\begin{theorem}\label{thm:min-stab}
The following statements hold.
\begin{itemize}
    \item[$\rm(i)$] Let the sequence $\{z_j\}$ be convergent to $z$ in $L^1(\Omega)$, and $\{\sigma_j\}$ the corresponding minimizer to the functional $\mathcal J_\beta$ with $z_j$ in place of $z$. Then the sequence $\{\sigma_j\}$ contains a subsequence convergent to a minimizer of $\mathcal J_\beta$ in the intermediate topology in $BV(\Omega)$.
    \item[$\rm(ii)$]Let $\{\delta_j\}\subset\mathbb{R}^+$ with $\delta_j\to0^+$, $\{z_j\}$ be a sequence satisfying $\|z_j-z^\ast\|_{L^1(\Omega)}=\delta_j$ for some exact data $z^\ast$, and $\sigma_j$ be a minimizer to the functional $\mathcal J_{\beta_j}$ with $z_j$ in place of $z^\ast$. If $\beta_j$s satisfy    \begin{equation*}
        \lim_{j\to\infty}\beta_j = 0 \quad\mbox{and}\quad \lim_{j\to\infty}\frac{\delta_j}{\beta_j } =0,
    \end{equation*}
    then the sequence $\{\sigma_j\}$ contains a subsequence convergent to an $|\cdot|_{\rm TV}$-minimizing solution $\sigma^\dag$ in the intermediate topology $BV(\Omega)$.
\end{itemize}
\end{theorem}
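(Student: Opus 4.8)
\textbf{Proof plan for Theorem \ref{thm:min-stab}.}

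The plan is to treat both assertions as instances of the standard stability and consistency theory for Tikhonov-type regularization with a convex penalty, as developed in \cite{ScherzerGrasmair:2009,SchusterKaltenbacher:2012,ItoJin:2015}; the only genuine task is to verify that the three structural ingredients that theory requires are available here from the results already proved. These ingredients are: (a) the penalty $|\cdot|_{\rm TV}$ is nonnegative and sequentially lower semicontinuous with respect to $L^1(\Omega)$ convergence, which is exactly Lemma \ref{lem:BV-props}(ii); (b) the sublevel sets of $\mathcal J_\beta$ restricted to $\mathcal A$ are sequentially precompact in the chosen topology, which follows from the $L^\infty$ bound built into $\mathcal A$ together with the compact embedding $BV(\Omega)\hookrightarrow L^1(\Omega)$ of Lemma \ref{lem:BV-props}(i); and (c) the fidelity term $\sigma\mapsto\|H(\sigma)-z\|_{L^1(\Omega)}$ is sequentially continuous along $L^1$-convergent sequences in $\mathcal A$, which is Lemma \ref{lem:H-cont}(i). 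With these in hand both parts reduce to the familiar diagonal/minimizing-sequence arguments.

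For part (i) I would fix the perturbed data $\{z_j\}$ with $z_j\to z$ in $L^1(\Omega)$ and let $\sigma_j$ minimize the functional with $z_j$ in place of $z$. Comparing $\mathcal J_\beta(\sigma_j;z_j)$ against the value at any fixed competitor (for instance a minimizer $\bar\sigma$ of the limit problem) and using $\|H(\sigma_j)-z_j\|_{L^1}\le\mathcal J_\beta(\sigma_j;z_j)$ yields a uniform bound on $\beta|\sigma_j|_{\rm TV}$, hence on $\|\sigma_j\|_{BV(\Omega)}$ via the $L^\infty$ bound on $\mathcal A$ and boundedness of $\Omega$, exactly as in the proof of Theorem \ref{thm:minimizer}. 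By Lemma \ref{lem:BV-props}(i) extract a subsequence $\sigma_j\to\sigma^\ast$ in $L^1(\Omega)$ with $\sigma^\ast\in\mathcal A$ (the pointwise a.e. bounds pass to the limit). Lemma \ref{lem:H-cont}(i) then gives $H(\sigma_j)\to H(\sigma^\ast)$ in $L^1(\Omega)$, so the fidelity converges; combining this with the lower semicontinuity of $|\cdot|_{\rm TV}$ and the convergence $z_j\to z$ shows $\mathcal J_\beta(\sigma^\ast;z)\le\liminf_j\mathcal J_\beta(\sigma_j;z_j)$. A reverse inequality obtained by testing the $j$th problem against any $\sigma\in\mathcal A$ and passing to the limit identifies $\sigma^\ast$ as a minimizer of $\mathcal J_\beta(\cdot;z)$. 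To upgrade $L^1$ convergence to intermediate convergence it then suffices to show $|\sigma_j|_{\rm TV}\to|\sigma^\ast|_{\rm TV}$, which follows by sandwiching: lower semicontinuity gives $|\sigma^\ast|_{\rm TV}\le\liminf_j|\sigma_j|_{\rm TV}$, while optimality of $\sigma_j$ and continuity of the fidelity force $\limsup_j|\sigma_j|_{\rm TV}\le|\sigma^\ast|_{\rm TV}$.

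For part (ii) the argument is the classical source-condition-free consistency proof under the parameter choice $\beta_j\to0$, $\delta_j/\beta_j\to0$. Let $\sigma^\dagger$ denote an $|\cdot|_{\rm TV}$-minimizing solution, i.e.\ a minimizer of $|\cdot|_{\rm TV}$ over $\{\sigma\in\mathcal A:H(\sigma)=z^\ast\}$ (assumed nonempty, since $\sigma^\ast$ is admissible). Using minimality of $\sigma_j$ for $\mathcal J_{\beta_j}(\cdot;z_j)$ and testing against $\sigma^\dagger$, together with $\|H(\sigma^\dagger)-z_j\|_{L^1}=\|z^\ast-z_j\|_{L^1}=\delta_j$, gives the two-sided estimate $\|H(\sigma_j)-z_j\|_{L^1}+\beta_j|\sigma_j|_{\rm TV}\le\delta_j+\beta_j|\sigma^\dagger|_{\rm TV}$. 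Dividing by $\beta_j$ and invoking $\delta_j/\beta_j\to0$ yields both $|\sigma_j|_{\rm TV}\le|\sigma^\dagger|_{\rm TV}+o(1)$ and $\|H(\sigma_j)-z_j\|_{L^1}\to0$, whence $\|H(\sigma_j)-z^\ast\|_{L^1}\to0$ by the triangle inequality. The $BV$ bound again furnishes an $L^1$-convergent subsequence $\sigma_j\to\tilde\sigma\in\mathcal A$; Lemma \ref{lem:H-cont}(i) and continuity of $H$ give $H(\tilde\sigma)=z^\ast$, so $\tilde\sigma$ is feasible, while lower semicontinuity combined with $\limsup_j|\sigma_j|_{\rm TV}\le|\sigma^\dagger|_{\rm TV}$ shows $|\tilde\sigma|_{\rm TV}\le|\sigma^\dagger|_{\rm TV}$, so $\tilde\sigma$ is itself an $|\cdot|_{\rm TV}$-minimizing solution and $|\sigma_j|_{\rm TV}\to|\tilde\sigma|_{\rm TV}$, giving intermediate convergence. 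The main obstacle, as in part (i), is not any single estimate but the passage from $L^1$ to intermediate convergence: one must be careful that the only place lower semicontinuity gives a one-sided inequality (on the TV term) is compensated by the matching upper bound extracted from optimality, and that the subsequential nature of the $BV$ compactness is carried consistently through the whole argument.
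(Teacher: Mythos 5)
Your proposal is correct and follows exactly the route the paper intends: the paper itself only remarks that the result is "straightforward" from lower semicontinuity and defers to the standard Tikhonov stability/consistency arguments in the cited references, and your write-up is a faithful instantiation of that standard argument using Lemma \ref{lem:BV-props} for compactness and lower semicontinuity and Lemma \ref{lem:H-cont}(i) for continuity of the fidelity term. The sandwich argument you use to upgrade $L^1$ convergence to intermediate convergence (lower semicontinuity on one side, optimality of $\sigma_j$ on the other) is precisely the expected mechanism, and your handling of the subsequential nature of the compactness steps is appropriate.
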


\begin{remark}
The functional $\mathcal{J}_\beta$ is generally not convex, similar to the quadratic fitting in \cite{Capdeboscq2009}.
Specifically, let $ j(\sigma) = \int_\Omega |\sigma|\nabla u(\sigma)|^2 - z|\,dx$. Fix $\sigma\in\mathcal{A}$, $ z $
and $ f $ such that $ \sigma|\nabla u(\sigma)|^2 < z $ a.e. in $ \Omega $. Since $ u(\alpha \sigma) = \alpha^{-1}u(\sigma) $, we have
$j(\alpha\sigma) = \int_\Omega (z-\alpha^{-1}\sigma|\nabla u(\sigma)|^2)\,d x.$
Since $ \alpha \mapsto \alpha^{-1} $ is convex on $ \mathbb{R}_+ $, the map $ \alpha \mapsto j(\alpha\sigma) $ is concave, and thus, $ j $ is not convex in $ \sigma $. 
\end{remark}

\section{Numerical algorithm}\label{sec:alg}
In this part, we describe an algorithm for Problem \eqref{eq:opt:problem}. Problem \eqref{eq:opt:problem} is numerically challenging to solve since it involves a nonlinear forward map $H(\sigma)$, and the functional $\mathcal{J}_\beta$ is nonsmooth and nonconvex. We develop
an easy-to-implement and robust algorithm using the following approximations at each outer iteration:
recursively linearizing the operator $ H(\sigma)$, smoothing the nonsmooth term and approximating the $ L^1 $-norms by quadratic terms. The resulting intermediate constrained quadratic optimization problems are then solved by the conjugate gradient method.

\subsection{Derivation}
First, we tackle the nonconvexity with the fitting term $\|H(\sigma)-z\|_{L^1(\Omega)}$ by linearizing the
forward map $H(\sigma)$. Consider a small change $ \kappa $ from a fixed  $ \sigma $ (e.g., current
approximation). Then by Theorem \ref{prop:deriv-H}, $H(\sigma+\kappa) \approx H(\sigma) + H'(\sigma)\kappa $.
Hence, for a fixed $ \sigma $, we obtain a linearized functional $J_{\sigma,\beta}(\kappa) $ defined by
\begin{equation} \label{eq:linearized-functional}
    \mathcal{J}_\beta(\sigma+\kappa) \approx J_{\sigma,\beta}(\kappa) = \|H'(\sigma)\kappa - d_\sigma\|_{L^1(\Omega)} + \beta|\sigma+\kappa|_{\rm TV},
\end{equation}
with $ d_\sigma = z-H(\sigma) $.  Accordingly, we define a new admissible set $\mathcal A_\sigma$ by
$\mathcal{A}_\sigma := \{\kappa \in BV(\Omega): \sigma + \kappa \in \mathcal A \}$ to accommodate the box constraint on $\sigma$. This step gives rise to a nonsmooth but convex functional $J_{\sigma,\beta}(\kappa)$ (in the increment $\kappa$), since both
TV seminorm and $ L^1(\Omega)$ fitting are nondifferentiable. The nonsmoothness renders the numerical
treatment inconvenient, and there are several possible strategies to handle this. Below, we employ the commonly used smoothing:
\begin{gather*}
   |\sigma|_{\mathrm{TV},\epsilon} = \int_\Omega |D\sigma|_\epsilon,
	\quad\text{and}\quad
	\|\cdot\|_\epsilon = \int_\Omega |\cdot|_\epsilon\,dx, \quad |\cdot|_\epsilon = \sqrt{|\cdot|^2 + \epsilon^2},
\end{gather*}
where $\epsilon>0$ is small and controls the degree of smoothing, and denote by $ J_{\sigma,\beta,\epsilon}(\kappa) $ the functional in \eqref{eq:linearized-functional} with the
smoothed norm $\|\cdot\|_\epsilon$ and seminorm $|\cdot|_{\rm TV,\epsilon}$:
\begin{align*}
    J_{\sigma,\beta,\epsilon}(\kappa) = \|H'(\sigma)\kappa - d_\sigma\|_{\epsilon} + \beta|\sigma+\kappa|_{\mathrm{TV},\epsilon}.
\end{align*}
The next result shows that a minimizer of $ J_{\sigma,\beta,\epsilon} $ converges to a minimizer of $ J_{\sigma,\beta} $ as $ \epsilon $ goes to zero.

\begin{theorem}
For any $\epsilon>0$, there exists at least one minimizer $\kappa_\epsilon$ to the functional
$J_{\sigma,\beta,\epsilon}$, and any accumulation point of the sequence of minimizers $\{\kappa_\epsilon\}_{\epsilon>0}$
is a minimizer of $J_{\sigma,\beta} $, as $\epsilon\to0^+$.
\end{theorem}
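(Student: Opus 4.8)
The plan is to treat the two assertions separately: existence of $\kappa_\epsilon$ for fixed $\epsilon$ by the direct method of the calculus of variations, and the passage $\epsilon\to0^+$ by a $\Gamma$-convergence-type sandwich combining a \emph{liminf} inequality with a comparison against fixed competitors. Throughout I would exploit the elementary pointwise bounds $|w|\le\sqrt{|w|^2+\epsilon^2}\le|w|+\epsilon$, which transfer directly to the smoothed norm and seminorm:
$$\|w\|_{L^1(\Omega)}\le\|w\|_\epsilon\le\|w\|_{L^1(\Omega)}+\epsilon|\Omega|,\qquad |v|_{\rm TV}\le|v|_{\mathrm{TV},\epsilon}\le|v|_{\rm TV}+\epsilon|\Omega|.$$

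For existence I would take a minimizing sequence $\{\kappa_n\}\subset\mathcal{A}_\sigma$. The box constraint $\sigma+\kappa_n\in\mathcal{A}$ forces a uniform $L^\infty(\Omega)$ bound (namely $|\kappa_n|\le\lambda^{-1}-\lambda$ a.e.), hence a uniform $L^1(\Omega)$ bound on the bounded domain $\Omega$; together with $\beta|\sigma+\kappa_n|_{\rm TV}\le\beta|\sigma+\kappa_n|_{\mathrm{TV},\epsilon}\le J_{\sigma,\beta,\epsilon}(\kappa_n)$ this yields a uniform $BV(\Omega)$ bound. By the compact embedding $BV(\Omega)\hookrightarrow L^1(\Omega)$ of Lemma \ref{lem:BV-props}(i) I extract a subsequence converging in $L^1(\Omega)$, and passing to an a.e.\ convergent subsequence preserves the box constraint, so the limit $\kappa_\epsilon$ lies in $\mathcal{A}_\sigma$. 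Lower semicontinuity of $J_{\sigma,\beta,\epsilon}$ then closes the argument: the smoothed TV term is $L^1(\Omega)$-lower semicontinuous (via $|v|_{\mathrm{TV},\epsilon}\ge|v|_{\rm TV}$ and Lemma \ref{lem:BV-props}(ii)), while the fidelity term is in fact continuous, as explained below.

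For the limit $\epsilon\to0^+$, comparing each minimizer with $\kappa=0\in\mathcal{A}_\sigma$ gives $\beta|\sigma+\kappa_\epsilon|_{\rm TV}\le J_{\sigma,\beta,\epsilon}(0)\le\|d_\sigma\|_{L^1(\Omega)}+\beta|\sigma|_{\rm TV}+C\epsilon$, so $\{\kappa_\epsilon\}$ is uniformly bounded in $BV(\Omega)$ for $\epsilon\in(0,1]$; hence accumulation points exist in $L^1(\Omega)$ and, as above, lie in $\mathcal{A}_\sigma$. Let $\kappa_{\epsilon_j}\to\bar\kappa$ in $L^1(\Omega)$. On the lower side I would establish $\liminf_j J_{\sigma,\beta,\epsilon_j}(\kappa_{\epsilon_j})\ge J_{\sigma,\beta}(\bar\kappa)$: the fidelity term converges by the continuity below, and for the penalty the bound $|\sigma+\kappa_{\epsilon_j}|_{\mathrm{TV},\epsilon_j}\ge|\sigma+\kappa_{\epsilon_j}|_{\rm TV}$ together with the $L^1(\Omega)$-lower semicontinuity of Lemma \ref{lem:BV-props}(ii) gives $\liminf_j|\sigma+\kappa_{\epsilon_j}|_{\mathrm{TV},\epsilon_j}\ge|\sigma+\bar\kappa|_{\rm TV}$. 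On the upper side, minimality gives $J_{\sigma,\beta,\epsilon_j}(\kappa_{\epsilon_j})\le J_{\sigma,\beta,\epsilon_j}(\kappa)$ for every fixed $\kappa\in\mathcal{A}_\sigma$, and the pointwise bounds above yield $J_{\sigma,\beta,\epsilon_j}(\kappa)\to J_{\sigma,\beta}(\kappa)$. Chaining the two estimates produces $J_{\sigma,\beta}(\bar\kappa)\le J_{\sigma,\beta}(\kappa)$ for all $\kappa\in\mathcal{A}_\sigma$, i.e.\ $\bar\kappa$ minimizes $J_{\sigma,\beta}$.

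The step requiring the most care is the continuity of the linearized fidelity term under the weak ($L^1$) mode of convergence of the minimizers. Since the $\kappa_{\epsilon_j}\in\mathcal{A}_\sigma$ are uniformly bounded in $L^\infty(\Omega)$, the interpolation bound $\|\kappa_{\epsilon_j}-\bar\kappa\|_{L^p(\Omega)}^p\le\|\kappa_{\epsilon_j}-\bar\kappa\|_{L^\infty(\Omega)}^{p-1}\|\kappa_{\epsilon_j}-\bar\kappa\|_{L^1(\Omega)}$ upgrades $L^1(\Omega)$ convergence to $L^p(\Omega)$ convergence for every finite $p$. Then, using the explicit formula $H'(\sigma)[\kappa]=\kappa|\nabla u(\sigma)|^2+2\sigma\nabla u(\sigma)\cdot\nabla u'(\sigma)[\kappa]$ from Theorem \ref{prop:deriv-H}, the continuity of $\kappa\mapsto u'(\sigma)[\kappa]$ from $L^p(\Omega)$ to $W^{1,q}(\Omega)$ in Lemma \ref{lem:direc-derivative}(ii), and the regularity $u(\sigma)\in W^{1,q}(\Omega)$ of Theorem \ref{thm:reg}, I obtain $H'(\sigma)\kappa_{\epsilon_j}\to H'(\sigma)\bar\kappa$ in $L^{q/2}(\Omega)\hookrightarrow L^1(\Omega)$. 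Combined with $|\,\|w_j\|_{\epsilon_j}-\|w_j\|_{L^1(\Omega)}\,|\le\epsilon_j|\Omega|$, this gives $\|H'(\sigma)\kappa_{\epsilon_j}-d_\sigma\|_{\epsilon_j}\to\|H'(\sigma)\bar\kappa-d_\sigma\|_{L^1(\Omega)}$, which is exactly what both halves of the sandwich require.
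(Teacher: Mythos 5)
Your proposal is correct and follows essentially the same route as the paper: the direct method for existence, the uniform two-sided bound $J_{\sigma,\beta}\le J_{\sigma,\beta,\epsilon}\le J_{\sigma,\beta}+2|\Omega|\epsilon$, minimality of $\kappa_\epsilon$, and lower semicontinuity along $L^1(\Omega)$-convergent subsequences. The only cosmetic difference is that you test against arbitrary competitors in $\mathcal{A}_\sigma$ in a $\Gamma$-convergence style, whereas the paper tests against a fixed minimizer $\kappa^\ast$ of $J_{\sigma,\beta}$ to conclude that $\{\kappa_\epsilon\}$ is a minimizing sequence; your variant has the minor advantage of not presupposing that such a $\kappa^\ast$ exists.
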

\begin{proof}
The existence of a minimizer to $J_{\sigma,\beta,\epsilon}$ follows easily as Theorem \ref{thm:minimizer}.
Clearly, $ J_{\sigma,\beta}(\kappa) \leq J_{\sigma,\beta,\epsilon}(\kappa) \leq J_{\sigma,\beta}(\kappa) + 2|\Omega|\epsilon $.
Let $ \kappa^\ast $ minimize $ J_{\sigma,\beta} $ and $ \kappa_\epsilon $ minimize $ J_{\sigma,\beta,\epsilon}$. Then by the
minimizing properties of $\kappa^*$ and $\kappa_\epsilon$ to $J_{\sigma,\beta}$ and $J_{\sigma,\beta,\epsilon}$, respectively, there hold
\begin{equation*}
 J_{\sigma,\beta}(\kappa^\ast) \leq J_{\sigma,\beta}(\kappa_\epsilon) \leq J_{\sigma,\beta,\epsilon}(\kappa_\epsilon) \leq J_{\sigma,\beta,\epsilon}(\kappa^\ast) \leq J_{\sigma,\beta}(\kappa^\ast) + 2|\Omega|\epsilon.
\end{equation*}
Thus any $ \{\kappa_{\epsilon_k}\}_{k\in\mathbb{N}} \subset \{\kappa_\epsilon\}_{\epsilon>0} $, $ \epsilon_k \to 0 $ as $ k \to \infty $, minimizes $ J_{\sigma,\beta}$, and by repeating the arguments in the proof of Theorem \ref{thm:minimizer}, we deduce that every convergent subsequence converges to a global minimzer.
\end{proof}

Next we derive a simple weighting scheme to facilitate the minimization of the functional $J_{\sigma,\beta,\epsilon}$.
Differentiating $J_{\sigma,\beta,\epsilon}$ with respect to $\kappa$ yields
\begin{align*}
    J_{\sigma,\beta,\epsilon}'(\kappa)\eta = \int_\Omega\frac{H'(\sigma)\kappa - d_\sigma}{|H'(\sigma)\kappa - d_\sigma|_\epsilon}H'(\sigma)\eta\,dx + \beta\int_\Omega \frac{\nabla(\sigma+\kappa)}{|\nabla(\sigma+\kappa)|_\epsilon}\cdot\nabla\eta\,dx.
\end{align*}
This is a highly nonlinear equation in $\kappa$ (or more precisely variational inequality, under the constraint $\kappa\in\mathcal{A}_\sigma$). Then we freeze denominators of the integrands at some $ \kappa'\in\mathcal{A}_\sigma$:
\begin{align*}
    J_{\sigma,\beta,\epsilon,\kappa'}'(\kappa)\eta = \int_\Omega\frac{H'(\sigma)\kappa - d_\sigma}{|H'(\sigma)\kappa' - d_\sigma|_\epsilon}H'(\sigma)\eta\,dx + \beta\int_\Omega \frac{\nabla(\sigma+\kappa)}{|\nabla(\sigma+\kappa')|_\epsilon}\cdot\nabla\eta\,dx.
\end{align*}
This corresponds to the gradient for the following weighted quadratic problem
\begin{align}
    J_{\sigma,\beta,\epsilon,\kappa'}(\kappa) 
    &= \frac{1}{2}\int_\Omega w(\sigma,\kappa')|H'(\sigma)\kappa - d_\sigma|^2\,dx + \frac{\beta}{2}\int w_0(\sigma,\kappa')|\nabla(\sigma+\kappa)|^2\,dx\nonumber \\
    &= \tfrac{1}{2}\|H'(\sigma)\kappa - d_\sigma\|_{L_{w}^2(\Omega)}^2 + \tfrac{\beta}{2}\|\nabla(\sigma+\kappa)\|_{L_{w_0}^2(\Omega)}^2,\label{eqn:fcnl-weighted}
\end{align}
with the weight functions $w(\sigma,\kappa')$ and $w_0(\sigma,\kappa)$ given by
\begin{equation} \label{eq:weights}
    w(\sigma,\kappa') = |H'(\sigma)\kappa' - d_\sigma|_\epsilon^{-1} \quad \text{and} \quad w_0(\sigma,\kappa') = |\nabla(\sigma+\kappa')|_\epsilon^{-1}.
\end{equation}
The reweighing step is repeated several times until a suitable stopping criterion is reached. This whole procedure for updating the increment $\kappa$ is in the same spirit of lagged diffusivity for total variation denoising \cite{Vogel1996} or iteratively reweighed least-squares. Numerically, it is fairly robust to the initialization. Clearly, the procedure easily generalizes to multiple data sets. 

\subsection{Numerical algorithm} \label{sec:alg-updt-appr}
To compute the update $\kappa$ from the functional $J_{\sigma,\beta,\epsilon,\kappa'}$, we employ the conjugate gradient method. To this end, we first derive the matrix representation. Let $ \{\phi_i\} $, $ \{\psi_i\}$ and $\{\xi_i\} $ be given bases for representing (discretizing)
$\kappa$, $\sigma$ and $d_\sigma$, respectively. Let
\begin{equation*}
    \kappa(x) = \sum_i \kappa_i\phi_i(x), \quad \sigma(x) = \sum_i\sigma_i\psi_i(x) \quad \mbox{and}\quad d_\sigma(x) = \sum_i(d_\sigma)_i\xi_i(x).
\end{equation*}
In our implementation, we take the standard P1 finite element basis as the basis for all three functions; and we
refer to Appendix \ref{sec:fem} for a convergence analysis of the finite element approximations for both nonlinear and linearized problems.
Next we define the corresponding stiffness and mass matrices respectively by
\begin{align*}
    (\mathbf K_{w_0})_{ij} = \int_\Omega w_0\nabla\psi_i\cdot\nabla \psi_j\,dx, \quad
    (\mathbf L)_{ij} = \int_\Omega wH'(\sigma)\phi_iH'(\sigma)\phi_j\,dx,\quad 
    \mbox{and}\quad (\mathbf U)_{ij} = \int_\Omega w\xi_iH(\sigma)\phi_j\,dx. 
\end{align*}
Note that $\mathbf{K}_{w_0}$ is sparse, but $ \mathbf L $ and $ \mathbf U $ are generally not sparse.
Then upon expansion and with the vector representation of the variables, problem \eqref{eqn:fcnl-weighted} reads
\begin{equation*}
    \min_{\boldsymbol\kappa}{\boldsymbol \kappa}^T(\mathbf L + \beta \mathbf{K}_{w_0})\boldsymbol{\kappa} - 2(\mathbf{d}_\sigma^T\mathbf{U} - \beta \boldsymbol\sigma^T\mathbf{K}_{w_0})\boldsymbol{\kappa} = \min_{\boldsymbol{\kappa}} \boldsymbol{\kappa}^T\mathbf{H}\boldsymbol{\kappa} - 2\mathbf{h}^T\boldsymbol{\kappa},
\end{equation*}
where $ \mathbf{H} = \mathbf{L} + \beta \mathbf{K}_{w_0} $ and $ \mathbf h = \mathbf{U}^T\mathbf
d_\sigma - \beta\mathbf{K}_{w_0}\boldsymbol{\sigma} $, and the optimality condition simply becomes
$ \mathbf{H}\boldsymbol\kappa - \mathbf{h} = 0 $. In the implementation, rather than forming  $\mathbf
H $ explicitly, it is beneficial to access $ \mathbf L $ only by matrix-vector product. To improve the numerical stability, one may add a small $ \delta\mathbf{I} $ to $ \mathbf{H} $, which allows to compensate the potential non-positivity of the matrix $\mathbf{H}$ due to numerical errors. 

Now we can present the detailed procedure in Algorithm \ref{alg:euclid}.
There are two stopping criteria for the algorithm: one for the iteratively
reweighed least-squares for updating the increment $\kappa$, and the other
for the linearization at the outer iteration. Either can be based on the
the relative change of the increment $\kappa$, and the second can be based
on the magnitude of the derivative of $ \mathcal J_\beta $ at $ \sigma $
in direction $\kappa' $. For the conjugate gradient iteration at Step 6,
we initialize the iteration with $\kappa'$, which can be fine tuned to be
$\lambda \kappa'$ for some $\lambda>0$, if needed, in a manner similar to
the heavy ball method. In our numerical experiments, this choice is a good warm starting strategy.

\begin{algorithm}
\caption{Numerical algorithm for Problem \eqref{eq:opt:problem}.}\label{alg:euclid}
\begin{algorithmic}[1]
\STATE Set initial guess $ \sigma_0 $, $\epsilon>0$ and maximum number of iterations $I$ and $K$. 
\FOR{$k=1,\ldots,K$}
\STATE Set $\kappa'=0$;
\FOR{$ i = 1,\dots,I $}
\STATE Compute weights $ w $ and $ w_0 $ by \eqref{eq:weights};
\STATE Solve \eqref{eqn:fcnl-weighted} by conjugate gradient method;
\STATE Check stopping criterion;
\STATE Set $ \kappa' = \kappa $;
\ENDFOR
\STATE Update $ \sigma_{k+1} = \sigma_k + \kappa' $;
\STATE Check stopping criterion;
\ENDFOR
\end{algorithmic}
\end{algorithm}

\section{Numerical results and discussions}\label{sec:numer}

Now we present numerical experiments to illustrate the proposed reconstruction algorithm.
\subsection{Experimental setting}
The algorithm is implemented
in Python using the \emph{DOLFIN 2017.2.0} (FEniCS) package 
\cite{Logg2012,LoggWells2010a}. Operators involving solving PDE's are formulated in the unified
form language of FEniCS and solved using standard solvers. The standard $ P1$ finite
elements are employed to approximate various functions. The meshes involved in generating
simulated data and reconstruction are obtained from the public software package \emph{gmsh 2.10.1}. All meshes are generated
from a circle shape, but with various refinement levels given by the characteristic length $ h $ (i.e., mesh size).
An overview of the statistics of the used
meshes is given in Table \ref{tbl:mesh}. In the experiments, we consider the following boundary data:
\begin{gather*} 
    f_1(x_1,x_2) = x_1, \quad
    f_2(x_1,x_2) = x_2, \quad
    f_3(x_1,x_2) = \tfrac{x_1+x_2}{\sqrt{2}}, \quad
    f_4(x_1,x_2) = \tfrac{x_1-x_2}{\sqrt{2}}.
\end{gather*}
The simulated power density data data $ \mathcal{H} = (H_1,H_2,H_3,H_4) $ corresponding to the boundary fluxes $ \mathcal{F} = (f_1,f_2,f_3,f_4) $ are
generated using a finer mesh $ M_1 $, and the reconstructions are performed on the coarser mesh $ M_2 $, in order to mitigate the so-called inverse crime. 

\begin{table}[htb!]
\centering
\caption{Mesh statistics.\label{tbl:mesh}}
    \begin{tabular}{lccc}
      \hline
        & Nodes & Triangles & $ h $ \\ \hline
        $ M_1 $ & 41690 & 84010 & 0.01 \\ \hline
        $ M_2 $ & 6523 & 13296 & 0.025 \\ \hline
    \end{tabular}
\end{table}

\begin{figure}[htb!]
    \centering
    \subfloat[Head model phantom\label{fig:headmodel}]{\includegraphics[width=0.25\textwidth]{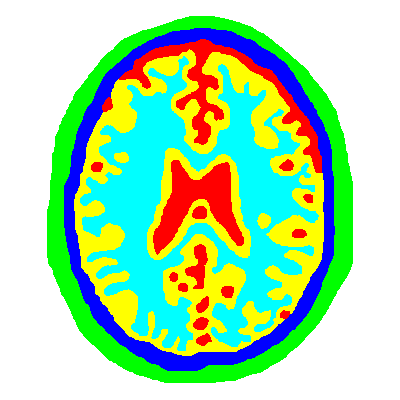}}
    \subfloat[Shape phantom\label{fig:shapes}]{\includegraphics[width=0.25\textwidth]{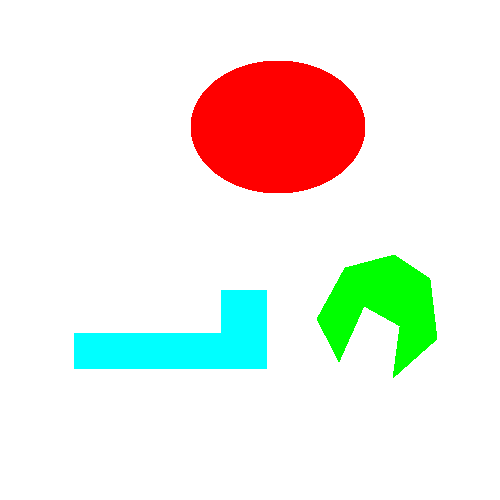}}
    \raisebox{1.451cm}{\subfloat[$\sigma$-values for different tissues.\label{tbl:tissues}]{\begin{tabular}{l|c|c}\hline
         Tissue/material & $ \sigma $ & Color  \\ \hline
         air & 0.4 & white \\ \hline
         scalp & 0.5232 & green \\ \hline
         skull & 0.2983 & blue \\ \hline
         spinal fluid & 1.0143 & red \\ \hline
         gray matter & 0.55946 & yellow \\ \hline
         white matter & 0.32404 & cyan \\ \hline
    \end{tabular}}}
    
    \subfloat[Meshed head model  \label{fig:head_true_sigma}]{\includegraphics[width=.225\textwidth]{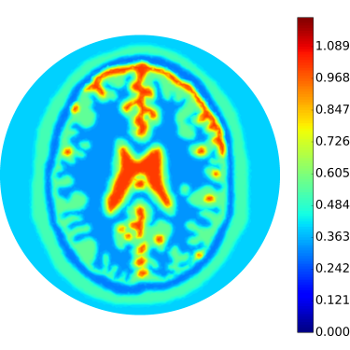}}~
    \subfloat[Meshed shape \label{fig:shape_true_sigma}]{\includegraphics[width=0.225\textwidth]{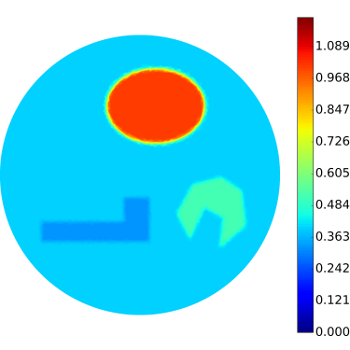}}
    
    \caption{The two phantoms: (a) brain phantom and (b) geometric shapes, for simulated data and reconstruction, and (c) table of color-tissue correspondence.}
    \label{fig:phantom}
\end{figure}

We consider two different phantoms: the brain phantom in Fig. \ref{fig:headmodel}\cite{LiKaramehmedovicSherinaKnudsen2018}
and the geometrical shapes phantom in Fig. \ref{fig:shapes}. The corresponding interior data are shown in Fig. \ref{fig:Hdata}.
When implementing the algorithm, the maximum number $I$ of inner iterations is fixed at $I=3$, and the number of conjugate gradient iterations is also fixed at $3$. Our experiments indicate that increasing these numbers does not improve much the reconstruction quality. 
Throughout, the regularization parameter $\beta$ is determined in a trial-and-error way. The noisy data $ \tilde{\mathbf{H}} $ is generated componentwise according to
$$ \tilde{\mathbf{H}} = \mathbf H + \delta_e\tfrac{|\mathbf{H}|}{|\mathbf{e}|}\mathbf{e} $$
where $\mathbf{e}$ is a vector of appropriate size with each entry following a standard Gaussian distribution, $\delta_e$ the relative noise level, and $ |\cdot| $ denotes the Euclidean norm of a vector.

\begin{figure}
   \subfloat[$ H_1 $ data\label{fig:head_H1}]{\includegraphics[width=0.225\textwidth]{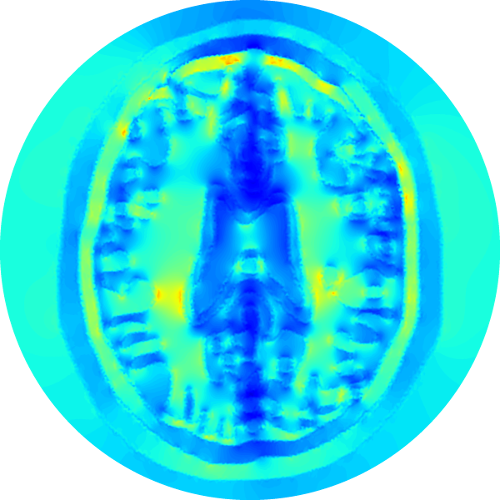}}~
   \subfloat[$H_2$ data \label{fig:head_H2}]{\includegraphics[width=0.225\textwidth]{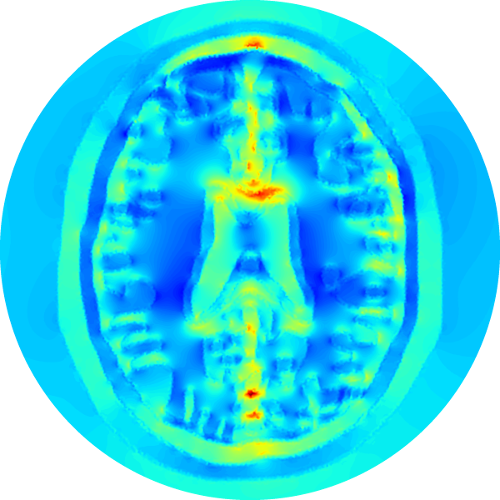}}~
   \subfloat[$ H_3 $ data\label{fig:head_H3}]{\includegraphics[width=0.225\textwidth]{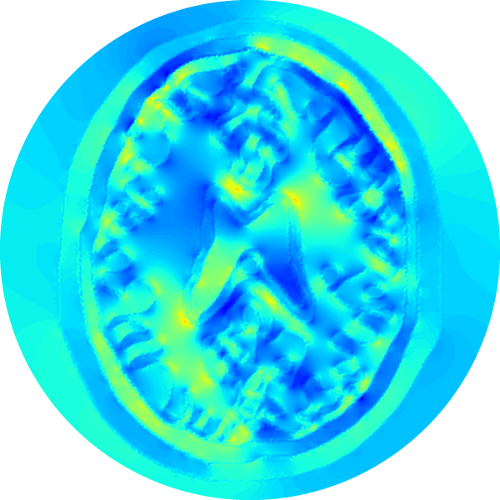}}~
   \subfloat[$ H_4 $ data\label{fig:head_H4}]{\includegraphics[width=0.225\textwidth]{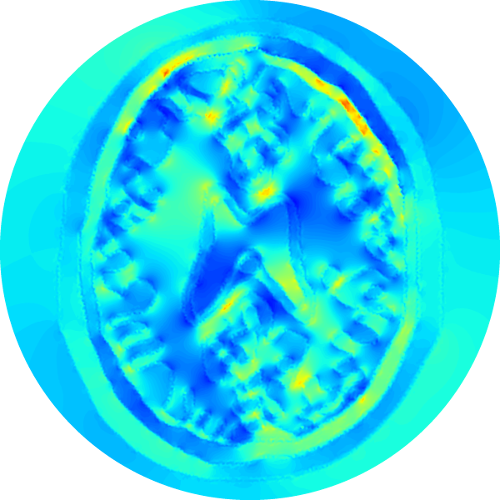}}~
   \subfloat{\includegraphics[width=0.0365\textwidth]{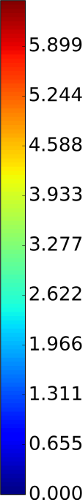}}
   
   \addtocounter{subfigure}{-1}
   \subfloat[$ H_1 $ data\label{fig:shape_H1}]{\includegraphics[width=0.225\textwidth]{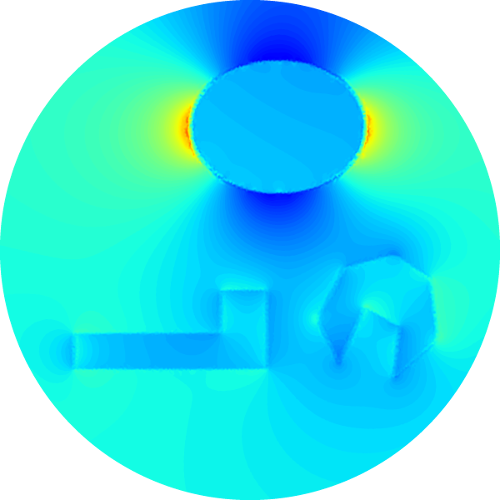}}~
   \subfloat[$H_2$ data \label{fig:shape_H2}]{\includegraphics[width=0.225\textwidth]{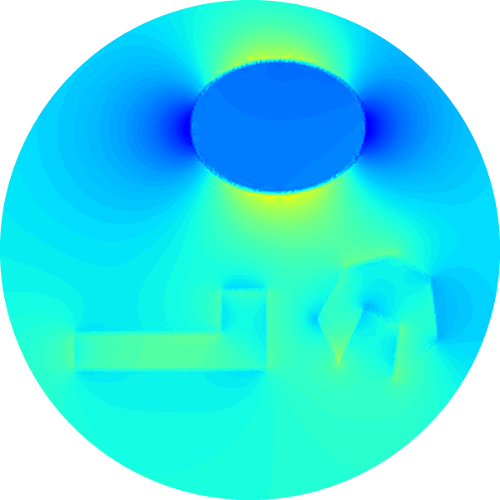}}~
   \subfloat[$ H_3 $ data\label{fig:shape_H3}]{\includegraphics[width=0.225\textwidth]{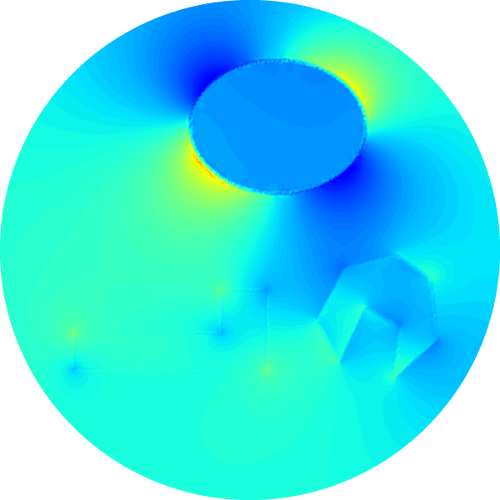}}~
   \subfloat[$ H_4 $ data\label{fig:shape_H4}]{\includegraphics[width=0.225\textwidth]{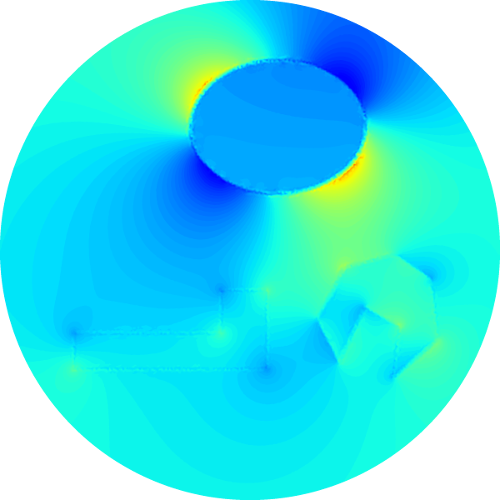}}~
   \subfloat{\includegraphics[width=0.0365\textwidth]{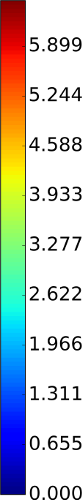}}
   
   \caption{The power density data corresponding to $ f_i $, $ i = 1,\ldots,4$ for each of the two phantoms.}       \label{fig:Hdata}
\end{figure}

\subsection{Convergence of the algorithm}

To gain insights into Algorithm \ref{alg:euclid}, we present some numerical results on the convergence behavior in Fig. \ref{fig:conv}. In Figs. \ref{fig:conv_func} and \ref{fig:conv_func_n1}, we show the evolution of the functional value $ \mathcal{J}_\beta(\sigma_k) $ during the first 50 outer iterations, where the thick dashed line denotes the functional $ \mathcal{J}_\beta(\sigma^\ast)$ evaluated at the true conductivity $\sigma^\ast$ (interpolated at the reconstruction mesh). 
Due to regularization and discretization, it is not surprising that the true conductivity $\sigma^*$ is generally not a global minimizer to $\mathcal{J}_\beta$. 
It is observed that the functional value decreases steadily as the iteration proceeds for both exact and noisy data, which shows clearly the robustness of the algorithm, and for exact data, eventually, the functional value falls below $\mathcal{J}_\beta(\sigma^*)$.

In Figs. \ref{fig:conv_err} and \ref{fig:conv_err_n1}, we show the reconstruction errors in several metrics along the iteration. 
Given the choice of the space $ BV(\Omega)$, we use the $ L^1(\Omega)$-norm and total variation difference and a metric related to the intermediate topology of $ BV(\Omega) $:
\begin{equation*} d_{BV}(\sigma,\eta) = \|\sigma-\eta\|_{L^1(\Omega)} + \left||\sigma|_\text{TV} - |\eta|_\text{TV}\right|.
\end{equation*}
The peculiar jump in the plots at the beginning is related to the difference in total variation. This might be attributed to the fact that two functions with identical total variation can look anything alike. Thus it is mostly the tail of the plot, where the $ L^1(\Omega)$-difference gets small, which is of significance. The error in total variation is dominating when compared with the $L^1(\Omega)$ error. The plot shows clearly that the convergence of the algorithm is quite steady for both exact and noisy data.

\begin{figure}[!htb]
    \centering
    \subfloat[functional value $ \mathcal{J}_\beta(\sigma_k) $\label{fig:conv_func}]{\includegraphics[width=0.49\textwidth]{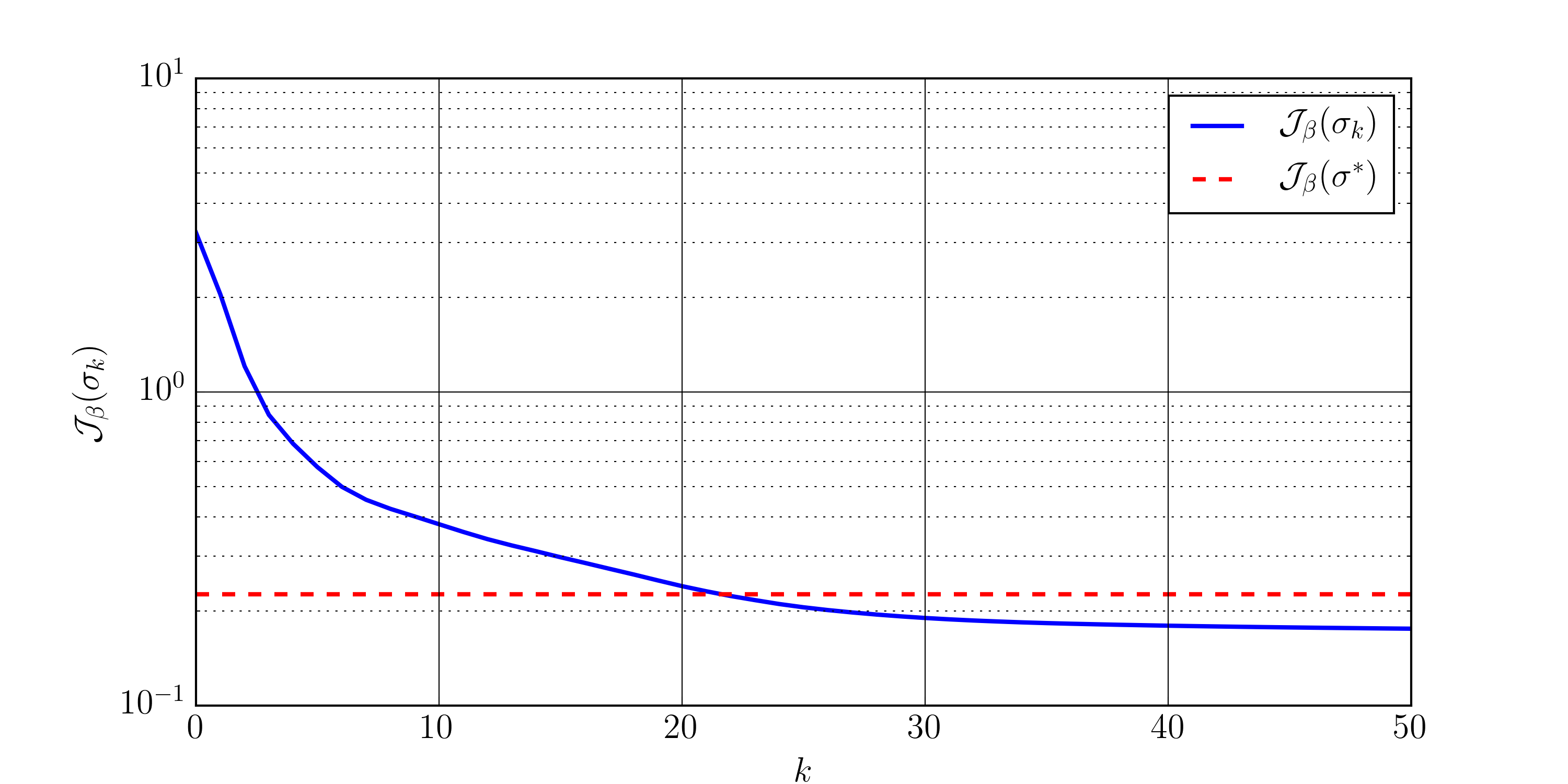}}
    \subfloat[errors\label{fig:conv_err}]{\includegraphics[width=0.49\textwidth]{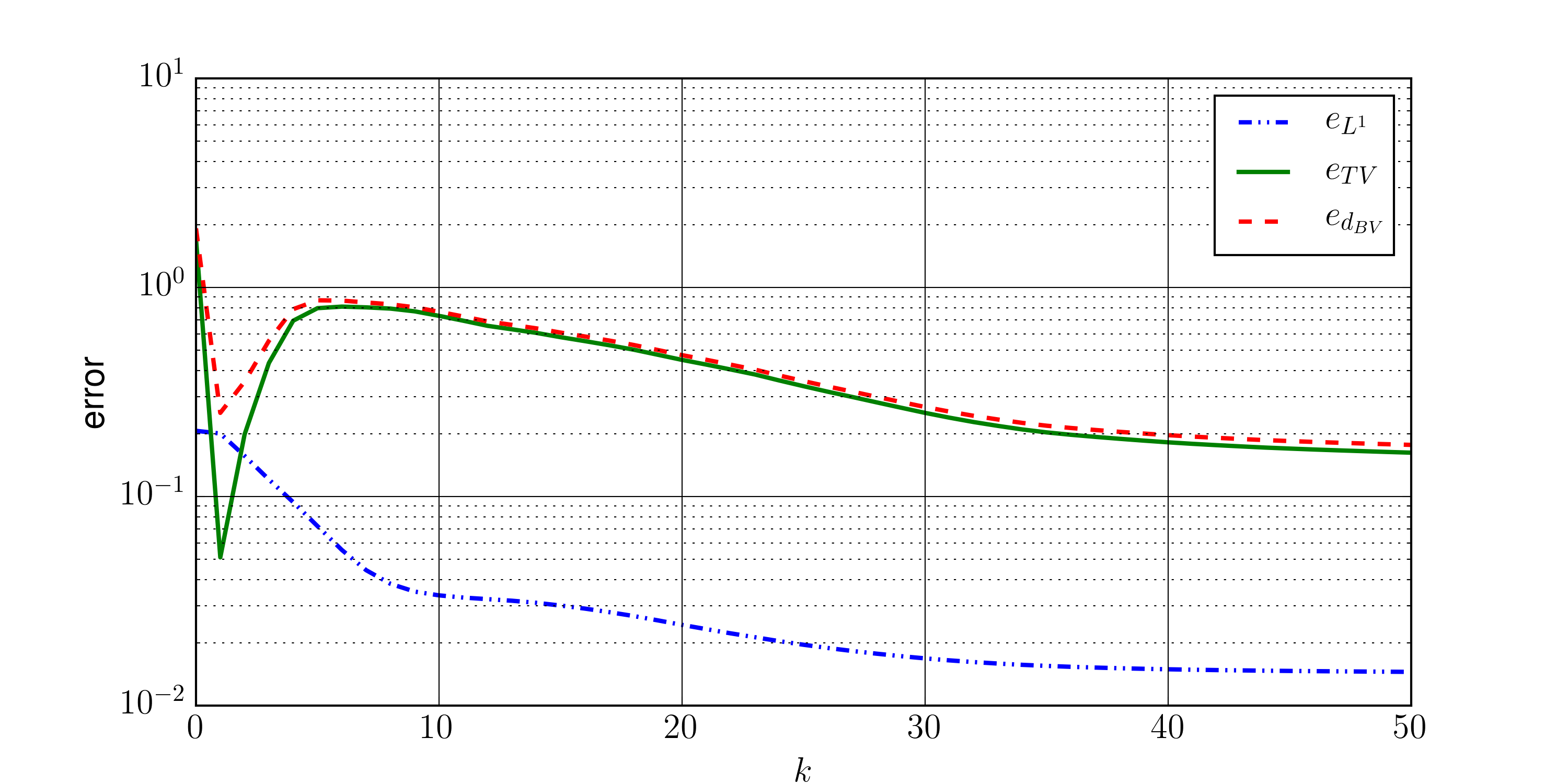}}\\
    \subfloat[functional value $ \mathcal{J}_\beta(\sigma_k) $\label{fig:conv_func_n1}]{\includegraphics[width=0.49\textwidth]{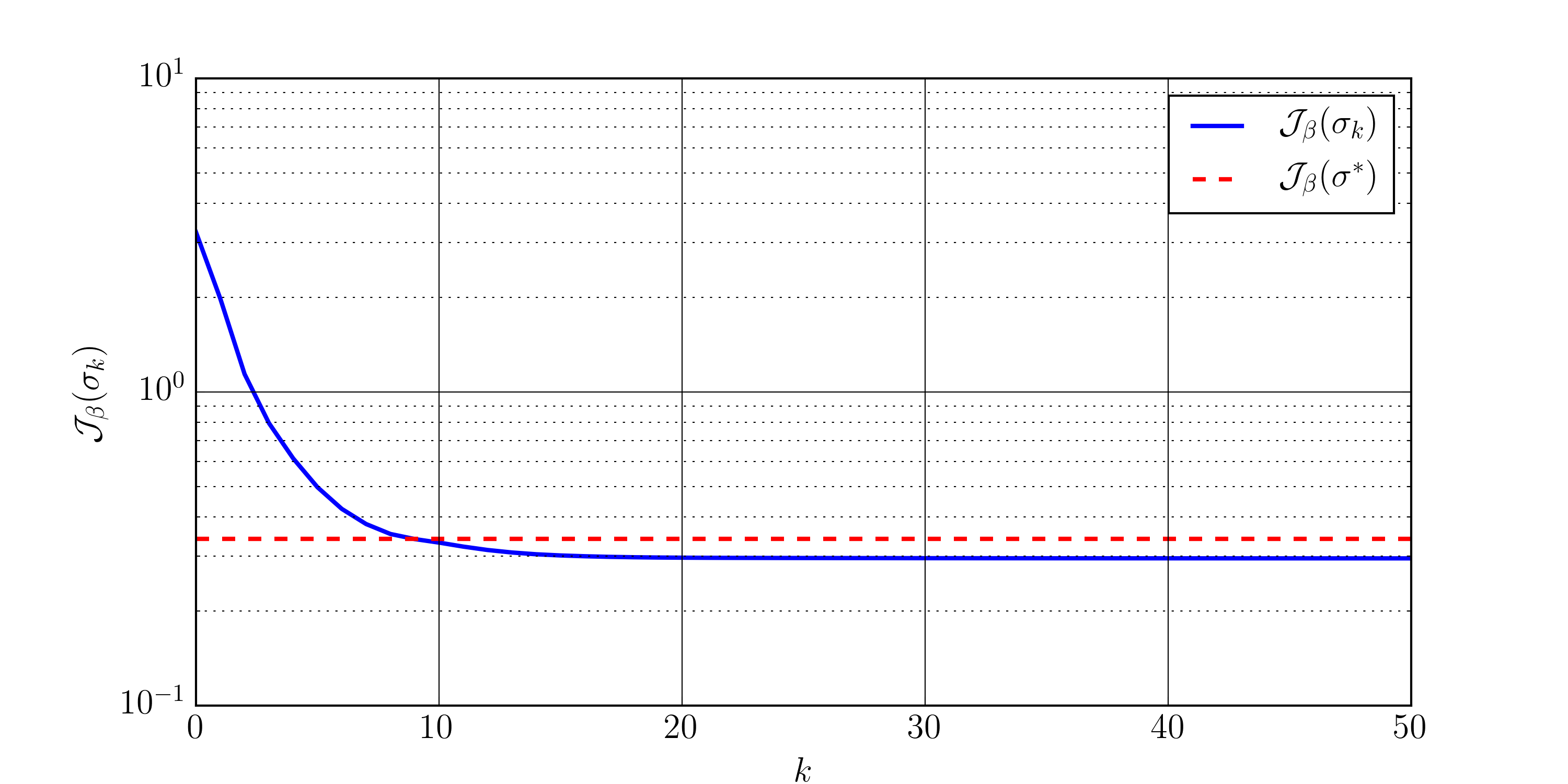}}
    \subfloat[errors\label{fig:conv_err_n1}]{\includegraphics[width=0.49\textwidth]{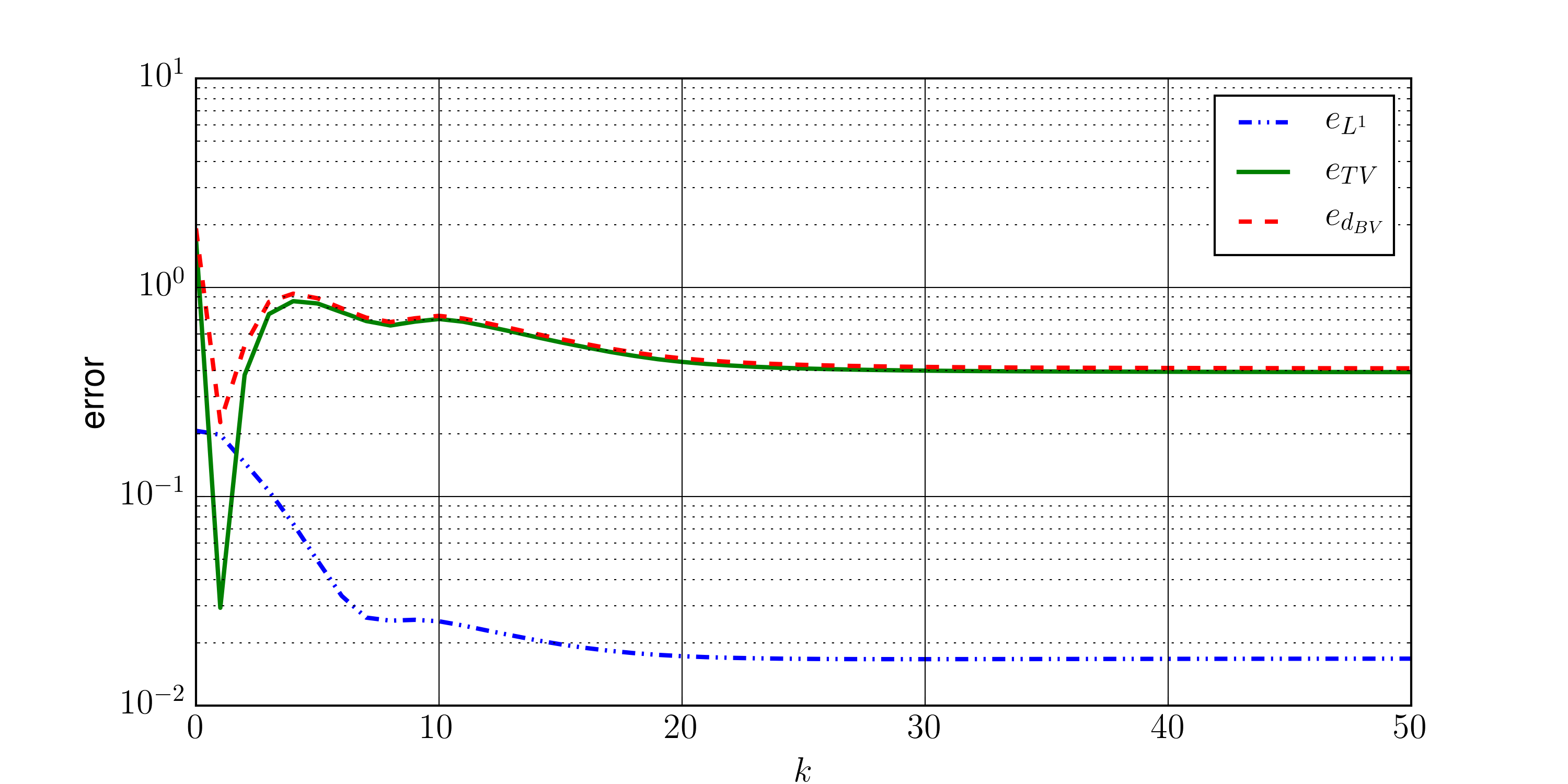}}
    \caption{Convergence plots for the Shape phantom in Fig. \ref{fig:shapes}  with exact data (top) and 1\% noise (bottom). The parameters are taken to be $ \beta = 3.5\times 10^{-2} $ and $ \epsilon = 10^{-4} $, $ \lambda = 0.8 $ and the data are $ H_1,H_2,H_3 $. The red line in (a) and (c) refers to $\mathcal{J}_\beta(\sigma^*)$. Figs. (b) and (d) show three metrics of the error $ e_{L^1} := \|\sigma_k - \sigma^\ast\|_{L^1(\Omega)} $, $ e_{TV} = ||\sigma_k|_\text{TV} - |\sigma^\ast|_{\text{TV}}| $ and $ e_{d_{BV}} = d_{BV}(\sigma_k,\sigma^\ast) $. \label{fig:conv}}
\end{figure}

\subsection{Numerical reconstructions for full data}
The reconstructions at two noise levels and different combinations of interior data are presented in Fig.
\ref{fig:recons_cg}, where the parameters for the reconstructions are shown in the captions. It is
observed when the noise level increases, some fine details disappear in the reconstructions. 
Note that some details, e.g., the upper left and right part of the skull in Fig. \ref{fig:head2_rec_noise_5},
can still be recovered by imposing less regularization, but at the cost of sacrificing the accuracy at the
remaining part. The background in Fig. \ref{fig:shape3_rec_noise_1} appears slightly noisy, which is actually due to the discretization
of the color-spectrum: 100 hues are used in the plots, but by changing it to 99 or 101, it is nearly completely uniform.

For the cases with only two boundary measurements, certain directional features are favored by the specified boundary data. The pairs $(f_1,f_2)$ and $(f_3,f_4)$ yields artifact that are different
from each other, showing the influence of the choice of boundary data. For a microlocal analysis of the artifacts in the linearized model of AET, we refer interested readers to the recent work \cite{BalKnudsen:2018}.

\begin{figure}[htb!]
    \centering
    \subfloat[1\% noise, $ H_1,H_2,H_3 $\label{fig:head3_rec_noise_1}]{\includegraphics[width=.225\textwidth]{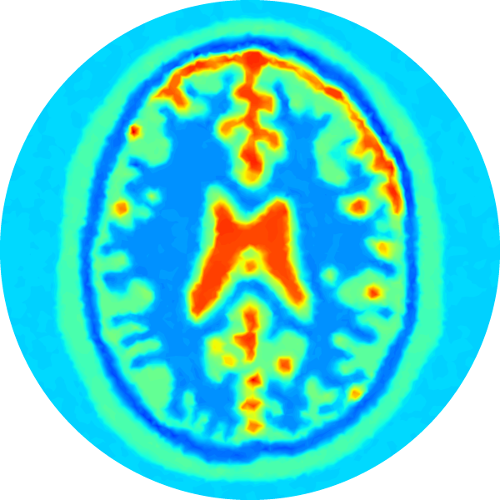}}~
    \subfloat[5\% noise, $ H_1,H_2,H_3 $\label{fig:head3_rec_noise_5}]{\includegraphics[width=.225\textwidth]{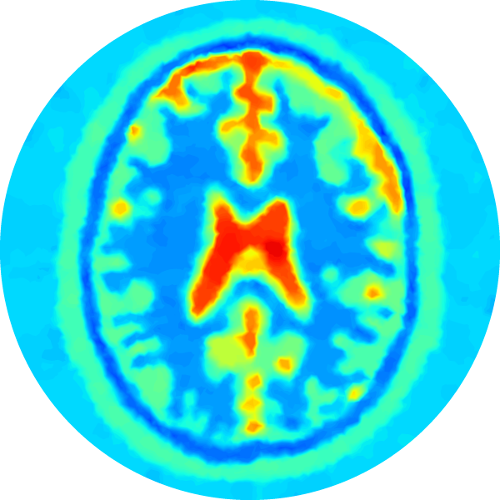}}~
    \subfloat[1\% noise, $ H_1,H_2,H_3 $\label{fig:shape3_rec_noise_1}]{\includegraphics[width=.225\textwidth]{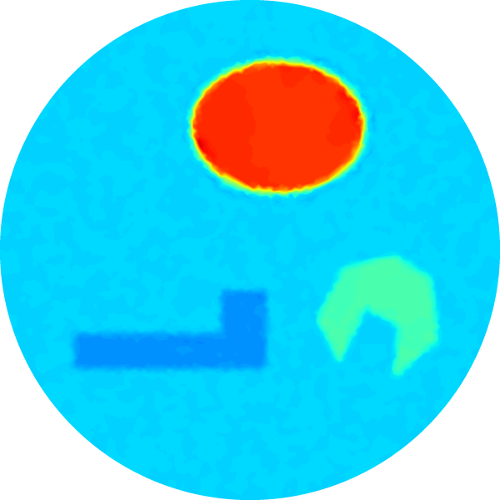}}~
    \subfloat[5\% noise, $ H_1,H_2,H_3 $\label{fig:shape3_rec_noise_5}]{\includegraphics[width=.225\textwidth]{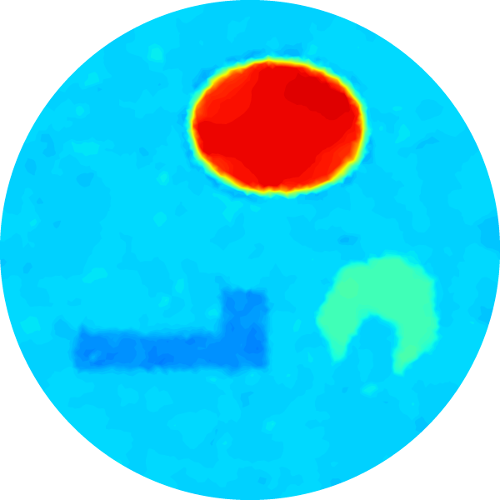}}~
    \subfloat{\includegraphics[width=.035\textwidth]{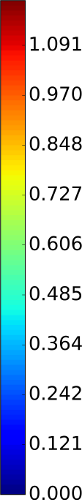}}

    \addtocounter{subfigure}{-1}
    \subfloat[1\% noise, $ H_1,H_2$\label{fig:head2_rec_noise_1}]{\includegraphics[width=.225\textwidth]{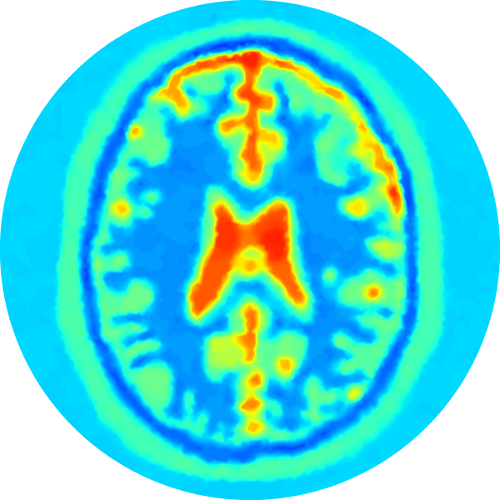}}~
    \subfloat[5\% noise, $ H_1,H_2$\label{fig:head2_rec_noise_5}]{\includegraphics[width=.225\textwidth]{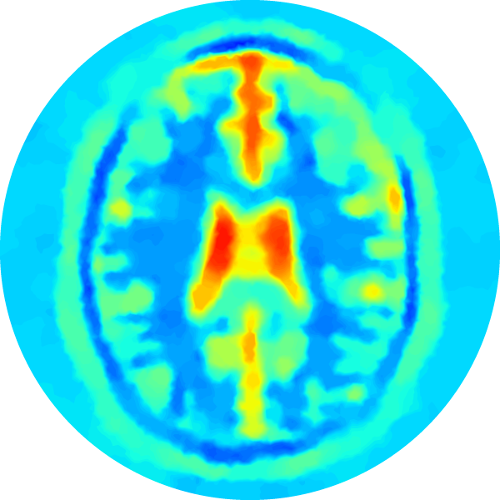}}~
    \subfloat[1\% noise, $ H_1,H_2$\label{fig:shape2_rec_noise_1}]{\includegraphics[width=.225\textwidth]{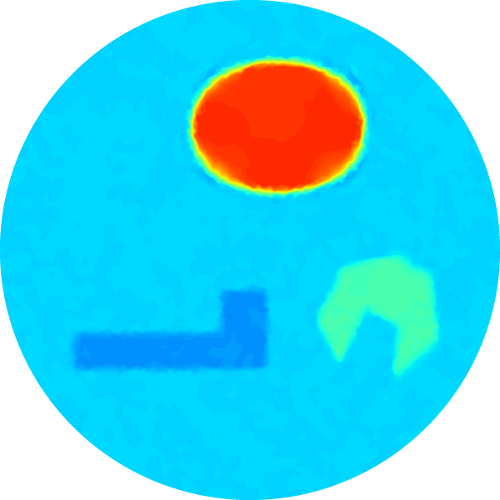}}~
    \subfloat[5\% noise, $ H_1,H_2$\label{fig:shape2_rec_noise_5}]{\includegraphics[width=.225\textwidth]{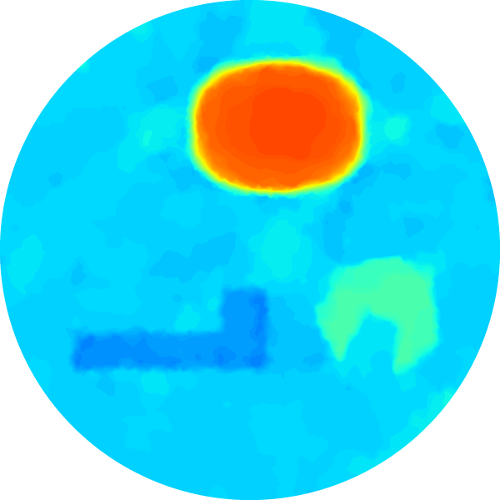}}~
    \subfloat{\includegraphics[width=.035\textwidth]{gfx_cg/cbar.png}}

    \addtocounter{subfigure}{-1}
    \subfloat[1\% noise, $ H_3,H_4$\label{fig:head2diag_rec_noise_1}]{\includegraphics[width=.225\textwidth]{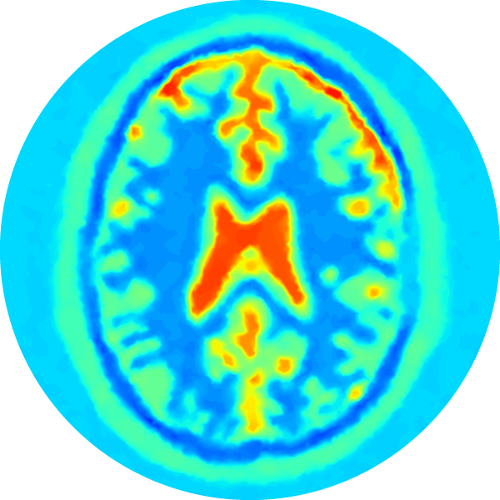}}~
    \subfloat[5\% noise, $ H_3,H_4$\label{fig:head2diag_rec_noise_5}]{\includegraphics[width=.225\textwidth]{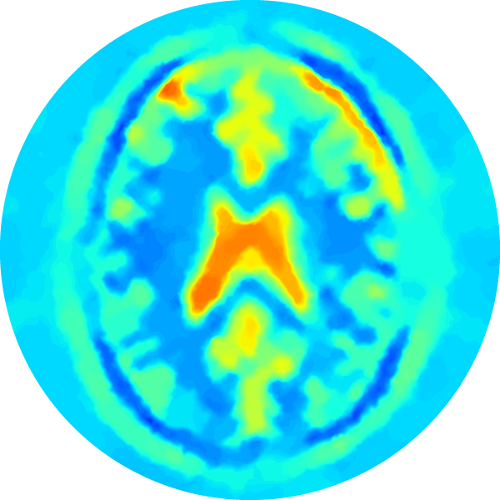}}~
    \subfloat[1\% noise, $ H_3,H_4$\label{fig:shape2diag_rec_noise_1}]{\includegraphics[width=.225\textwidth]{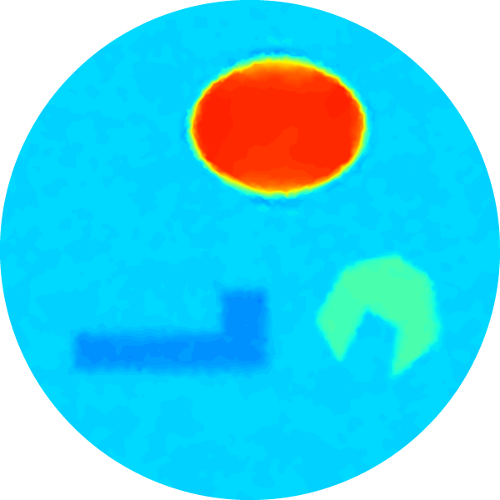}}~
    \subfloat[5\% noise, $ H_3,H_4$\label{fig:shape2diag_rec_noise_5}]{\includegraphics[width=.225\textwidth]{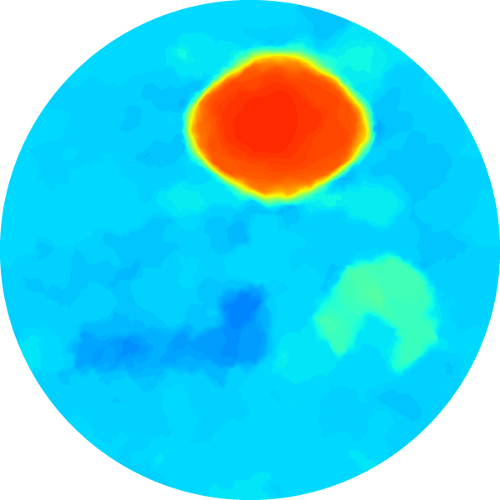}}~
    \subfloat{\includegraphics[width=.035\textwidth]{gfx_cg/cbar.png}}

    \caption{Reconstructions with different levels of noise, different amounts of data and different phantoms.  Parameters: $ \lambda = 0.8 $, $ \epsilon = 10^{-4} $, $ \beta = 3.5\times 10^{-2} $ for the 1\% noise cases. $ \beta = 0.7 $ for the 5\% noise cases. Colorbar scale is $ (0, 1.2) $.}
    \label{fig:recons_cg}
\end{figure}

\subsection{Numerical reconstructions for partial data}

The variational formulation in Section \ref{sec:rec} extends straightforwardly to partial data, by restricting the integral in the fidelity term to a subdomain and with obvious modification, Algorithm \ref{alg:euclid} extends easily. To illustrate this flexibility, we consider two different subdomains for the available interior power density data: one small concentric disc and one half-disc. 
The corresponding numerical results are presented in Fig. \ref{fig:recons_cg_partial}, where
the dashed red lines mark the boundary between domains with data and without. Notably, the reconstructions within the data-domains are fairly accurate, whereas the exterior has no significant updates during the iteration, and it is dominated completely by the initial guess $ \sigma_0 $.

Note that the scale of the colorbar in these plots is slightly larger compared to the other plots. This is due to an interesting effect happening near the boundary of the data-domain. Note that the inclusions in the phantom are reconstructed quite accurately near the center of the data-domain. However, as the boundary is approached, the values deviate from the background, seemingly in an attempt to compensate for the missing conductivity reconstruction outside the data-domain. However, the precise mechanism for the phenomenon is to be ascertained.

\begin{figure}[htb!]
    \centering
    \subfloat[Subdomain: inner disk        \label{fig:shape3_partial401}]{\includegraphics[width=.325\textwidth]{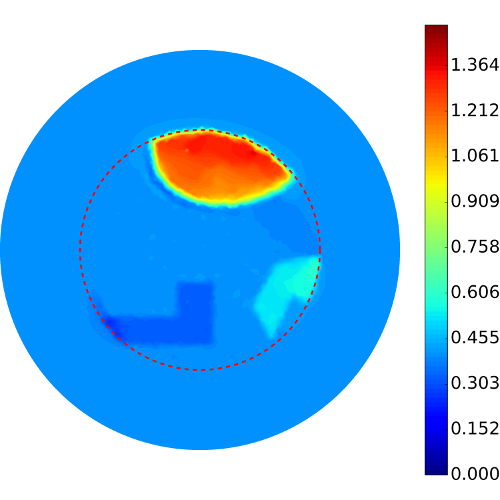}}~
    \subfloat[Subdomain: right half-disc.\label{fig:shape3_partial402}]{\includegraphics[width=.325\textwidth]{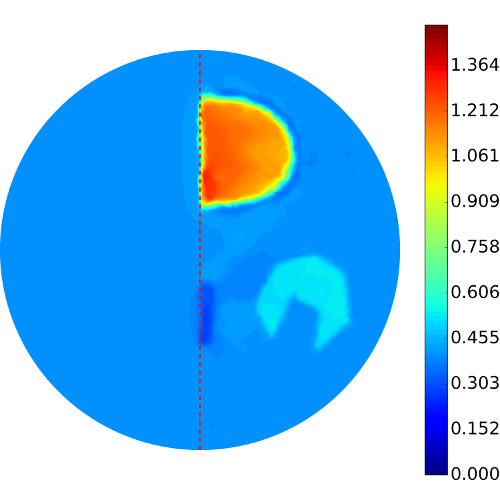}}

    \caption{The reconstructions with data available on subdomains (the boundary is indicated by the dashed curves), obtained with 1\% noise and three densities $ H_1 $, $ H_2 $ and $ H_3 $ data. Parameters: $ \lambda = 0.8 $, $ \epsilon = 10^{-4} $, $ \beta = 3.5\times 10^{-2} $. Colorbar scale is $ (0, 1.5) $.}
    \label{fig:recons_cg_partial}
\end{figure}

\section{Conclusion}
In this paper we have studied the numerical reconstruction of acousto-electric tomography under very weak regularity assumptions on the conductivity. We have shown various continuity and differentiability results on the solutions of the elliptic PDE and the parameter-to-data map. We proposed a reconstruction algorithm based on total variation penalty, and showed the existence of a minimizer and its stability. Further, we proposed an algorithm based on recursively linearizing the forward map, smoothing and lagged diffusivity approximation, together with the conjugate gradient method for solving the resulting variational formulation. We have demonstrated the accuracy of the approach with extensive numerical experiments with full and partial data.

\appendix
\section{Convergence of finite element approximations}\label{sec:fem}
In this appendix, we discuss the convergence of the finite element approximation of the
functional $\mathcal{J}_\beta$ and its linearization $J_{\beta,\sigma}$. Let $\mathcal{T}$ be a
quasi-uniform partition of the domain $\Omega$ into simplicial elements, and consider
the standard conforming piecewise linear finite element space:
\begin{equation*}
  X_h = \{\chi\in C(\overline\Omega):\; \chi|_T\in\mathcal{P}_1\ \forall T\in \mathcal{T}_h\},
\end{equation*}
where $\mathcal{P}_1(T)$ denotes the space of linear functions on $ T$, and $V_h:= X_h\cap V$. The
space $V_h$ is used to discretize the state and adjoint variables, and $X_h$ the conductivity $\sigma$. We
denote by $I_h:C(\overline{\Omega})\to X_h$ the standard Lagrangian nodal interpolation operator,
and $R_h:V\to V_h$ the Ritz projection:
\begin{equation*}
    (\nabla R_hv,\nabla \phi)= (\nabla v,\nabla\phi)\quad \forall v\in V,\phi\in V_h.
\end{equation*}
The operators satisfy the following approximation properties:
\begin{equation}\label{eqn:approx}
    \begin{aligned}
       \lim_{h\to0}\|v-I_hv\|_{W^{1,p}(\Omega)}&=0\quad \forall v\in W^{1,p}(\Omega),\ \ p>d,\\
       \lim_{h\to 0}\|v-R_hv\|_{H^1(\Omega)}&=0\quad \forall v\in V.
    \end{aligned}
\end{equation}
Below we  discuss the discretization of the functionals $\mathcal{J}_\beta$ and $J_{\beta,\sigma}$ separately. Throughout the appendix, we assume and $f\in (W^{1-\frac{1}{r},r}(\Gamma))'$, $r>2$.

First we discuss the functional $\mathcal{J}_\beta$. For any $\sigma_h\in \mathcal{A}_h$, the discrete forward problem is to find $u_h\equiv u_h(\sigma_h)\in V_h$ such that
\begin{equation}\label{eqn:fem}
  (\sigma_h\nabla u_h,\nabla \chi) = (f,\chi)_{L^2(\Gamma)} \quad \forall \chi\in V_h.
\end{equation}
Then with $H_h(\sigma_h)=\sigma_h|\nabla u_h(\sigma_h)|^2$,
the discrete analogue $\mathcal{J}_{\beta,h}$ of the functional $\mathcal{J}_\beta$ is given by
\begin{equation*}
    \mathcal{J}_{\beta,h}(\sigma_h) = \|H_h(\sigma_h)-z\|_{L^1(\Omega)} + \beta|\sigma_h|_{\rm TV},
\end{equation*}
which is to be minimized over the discrete admissible set $\mathcal{A}_h=\mathcal{A}\cap V_h$.
It is easy to obtain the existence of a minimizer $\sigma_h^*\in\mathcal{A}_h$.

The following discrete analogue of Theorem \ref{thm:reg} is useful. 
\begin{lemma}\label{lem:dis-Meyers}
For any $\sigma_h\in \mathcal{A}_h$, 
there exist some $q\in (2,r)$ and some constant $C$ independent $h$
such that the solution $u_h(\sigma_h)$ to problem \eqref{eqn:fem} satisfies
\begin{equation*}
  \|u_h(\sigma_h)\|_{W^{1,q}(\Omega)} \leq C\|f\|_{(W^{1-\frac1r,r}(\Gamma))'}.
\end{equation*}
\end{lemma}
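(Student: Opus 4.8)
The plan is to reproduce, at the discrete level, the perturbation (Meyers/Gr\"oger) argument underlying Theorem \ref{thm:reg}, the only genuinely new ingredient being a higher-integrability bound for a \emph{constant-coefficient} discrete operator that is uniform in $h$. Fix the constant reference coefficient $\sigma_0 := \tfrac12(\lambda+\lambda^{-1})$ and set $b_h := \sigma_h - \sigma_0$, so that $\|b_h\|_{L^\infty(\Omega)} \le \tfrac12(\lambda^{-1}-\lambda)$ and therefore
\begin{equation*}
  \sigma_0^{-1}\|b_h\|_{L^\infty(\Omega)} \;\le\; \frac{\lambda^{-1}-\lambda}{\lambda^{-1}+\lambda} \;=\; \frac{1-\lambda^2}{1+\lambda^2} \;<\;1,
\end{equation*}
a bound independent of both $h$ and $\sigma_h$. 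Rewriting \eqref{eqn:fem} as $\sigma_0(\nabla u_h,\nabla\chi) = \langle f,\chi\rangle + (b_h\nabla u_h,\nabla\chi)$ for all $\chi\in V_h$ exhibits $u_h$ as a fixed point of an affine map built from the discrete reference solution operator with coefficient $\sigma_0$.

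The heart of the matter is the map $G_h:L^2(\Omega)^d\to L^2(\Omega)^d$, $G_h\vec g = \nabla w_h$, where $\sigma_0(\nabla w_h,\nabla\chi)=(\vec g,\nabla\chi)$ for all $\chi\in V_h$. Since $\nabla w_h$ is $\sigma_0^{-1}$ times the $L^2$-orthogonal projection of $\vec g$ onto $\nabla V_h$, the operator $G_h$ has $L^2\to L^2$ norm exactly $\sigma_0^{-1}$, with no dependence on $h$. The key quantitative input I would invoke is the classical $W^{1,q_0}$-stability of the finite element projection on quasi-uniform meshes (of Rannacher--Scott / Brenner--Scott type), valid for some fixed $q_0>2$ uniformly in $h$, which bounds $\|G_h\|_{L^{q_0}\to L^{q_0}}\le M_0$ independently of $h$. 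Interpolating by Riesz--Thorin between $q=2$ and $q=q_0$ then gives $\|G_h\|_{L^q\to L^q}\le \sigma_0^{-(1-\theta)}M_0^{\theta}$ with $\theta=\theta(q)\to0$ as $q\to2^+$, so that $\|G_h\|_{L^q\to L^q}\to\sigma_0^{-1}$ as $q\to2^+$ \emph{uniformly in $h$}. This uniform continuity in $q$, resting on the exactness at $q=2$ together with the $h$-independent stability at $q_0$, is the main obstacle; everything else is bookkeeping. (Without the $h$-uniform stability at $q_0$ one could not produce an $h$-independent threshold.)

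With these bounds in hand, I would choose $q\in\big(2,\min(q_0,r)\big)$ close enough to $2$ — a choice depending only on $\lambda,d,q_0$ and hence independent of $h$ — so that the contraction factor $\|G_h\|_{L^q\to L^q}\|b_h\|_{L^\infty(\Omega)}<1$ uniformly in $h$, using the displayed bound above at $q=2$ and the uniform limit. For each fixed $h$ the solution $u_h$ already belongs to $W^{1,q}(\Omega)$ because $V_h$ is finite dimensional, so the fixed-point identity $\nabla u_h = \nabla w_h(f) + G_h(b_h\nabla u_h)$ may be estimated in $L^q$, and a Neumann series yields
\begin{equation*}
  \|\nabla u_h\|_{L^q(\Omega)} \;\le\; \frac{1}{1-\|G_h\|_{L^q\to L^q}\|b_h\|_{L^\infty(\Omega)}}\,\|w_h(f)\|_{W^{1,q}(\Omega)},
\end{equation*}
where $w_h(f)$ is the discrete reference solution for the datum $f$. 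The boundary term is handled exactly as in Theorem \ref{thm:reg}: by trace duality (equivalently, after reducing $f$ to a divergence-form volume datum in $L^r(\Omega)^d$), the same $h$-uniform reference stability gives $\|w_h(f)\|_{W^{1,q}(\Omega)}\le C\|f\|_{(W^{1-\frac1r,r}(\Gamma))'}$ for $q\le r$. Combining these bounds, and controlling the full norm through $\|u_h\|_{W^{1,q}(\Omega)}\le C\big(\|\nabla u_h\|_{L^q(\Omega)}+\|u_h\|_{L^1(\Omega)}\big)$ together with the zero-mean normalization of $V_h\subset V$, produces the asserted estimate with $C$ and the admissible $q\in(2,r)$ both independent of $h$.
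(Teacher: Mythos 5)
Your argument is correct and is in substance the same as the paper's: the paper simply cites the discrete inf--sup inequality of Brenner--Scott (Proposition 8.6.2), whose proof is exactly the perturbation-off-a-constant-coefficient, $h$-uniform $W^{1,q_0}$-stability-plus-interpolation, Neumann-series argument you carry out, followed by the same trace-duality treatment of $f$ via H\"older. (The only blemishes are immaterial: a sign slip in the fixed-point identity, which does not affect the norm estimates, and the implicit reliance on the mesh/domain hypotheses under which the Rannacher--Scott $W^{1,q_0}$-stability holds --- the same hypotheses the paper's citation requires.)
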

\begin{proof}
By the repeating the argument of \cite[Section 8.6]{BrennerScott:2008}
(the proof of Proposition 8.6.2), there exist
some $C>0$ and $q>2$ sufficiently close to $2$ such that for all $u_h\in V_h$, there holds
\begin{equation}\label{eqn:inf-sup}
\|u_h\|_{W^{1,q}(\Omega)} \leq C\sup_{0\neq v_h\in V_h}\frac{(\sigma_h\nabla u_h,\nabla v_h)}{\|v_h\|_{W^{1,q'}(\Omega)}}.
\end{equation}
Taking $u_h=u_h(\sigma_h)$ in the inequality, \eqref{eqn:fem}
and H\"{o}lder's inequality yield the desired assertion.
\end{proof}

We need a discrete analogue of Lemma \ref{lem:H-cont} on
the discrete forward map $\sigma_h\mapsto H_h(\sigma_h)$.
\begin{lemma}\label{lem:polyhed}
Let the sequence $\{\sigma_h\}_{h>0}\subset \mathcal{A}_h\subset\mathcal{A}$ converge
in $L^r(\Omega), \ r\geq 1$, to some $\sigma\in\mathcal{A}$ as $h$ tends to zero.
Then $H_h(\sigma_h) $ converges
to $H(\sigma)$ in $L^1(\Omega)$ as $h\to0^+$.
\end{lemma}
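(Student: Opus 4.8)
The plan is to mirror the proof of Lemma \ref{lem:H-cont}, while simultaneously controlling the perturbation of the coefficient $\sigma_h\to\sigma$ and the finite element error. Writing $u_h\equiv u_h(\sigma_h)$ for the discrete solution of \eqref{eqn:fem} and $u\equiv u(\sigma)$ for the weak solution of \eqref{eqn:forward}, I would start from the same decomposition as in \eqref{eqn:H-split},
\begin{equation*}
 H_h(\sigma_h)-H(\sigma) = (\sigma_h-\sigma)|\nabla u|^2 + \sigma_h\big(|\nabla u_h|^2-|\nabla u|^2\big).
\end{equation*}
For the first term I would use that $u\in W^{1,q}(\Omega)$ for some $q>2$ by Theorem \ref{thm:reg}, together with the observation that the uniformly bounded coefficients $\sigma_h$ converge in every $L^s(\Omega)$, $s<\infty$ (since $L^r$ convergence combined with the $L^\infty$ bound $|\sigma_h-\sigma|\leq\lambda^{-1}$ upgrades to all finite exponents by interpolation); H\"older's inequality then gives $\|(\sigma_h-\sigma)|\nabla u|^2\|_{L^1(\Omega)}\leq\|\sigma_h-\sigma\|_{L^{(q/2)'}(\Omega)}\|\nabla u\|_{L^q(\Omega)}^2\to0$. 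For the second term, the triangle and H\"older inequalities give the bound $\lambda^{-1}(\|\nabla u_h\|_{L^2(\Omega)}+\|\nabla u\|_{L^2(\Omega)})\|\nabla(u_h-u)\|_{L^2(\Omega)}$, and the energy estimate for \eqref{eqn:fem} (using $\sigma_h\geq\lambda$ and the Poincar\'e inequality \eqref{eqn:poincare}) bounds $\|\nabla u_h\|_{L^2(\Omega)}$ uniformly in $h$. Thus the whole statement reduces to establishing $\nabla u_h(\sigma_h)\to\nabla u(\sigma)$ in $L^2(\Omega)$.

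To prove this convergence I would insert the continuous solution $u(\sigma_h)$ at the \emph{same} coefficient and split $\|\nabla(u_h(\sigma_h)-u(\sigma))\|_{L^2(\Omega)}$ into $\|\nabla(u_h(\sigma_h)-u(\sigma_h))\|_{L^2(\Omega)}+\|\nabla(u(\sigma_h)-u(\sigma))\|_{L^2(\Omega)}$. The second piece is a pure coefficient perturbation: since $\sigma_h\to\sigma$ in $L^r(\Omega)$ and the hypothesis $f\in(W^{1-\frac1r,r}(\Gamma))'$ is in force throughout the appendix, Lemma \ref{lem:cont-of-sol-op}(ii) gives $u(\sigma_h)\to u(\sigma)$ in $W^{1,q}(\Omega)$, in particular in $H^1(\Omega)$, for the whole sequence. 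The first piece is the Galerkin error for \eqref{eqn:forward} at the \emph{fixed} coefficient $\sigma_h$: both $u_h(\sigma_h)$ and $u(\sigma_h)$ solve the weak form with coefficient $\sigma_h$ and the same $f$, and since $V_h\subset V$ this yields the Galerkin orthogonality $(\sigma_h\nabla(u_h(\sigma_h)-u(\sigma_h)),\nabla\chi)=0$ for all $\chi\in V_h$. A C\'ea argument based on the uniform coercivity $\sigma_h\geq\lambda$ then gives, for every $\chi\in V_h$, $\|\nabla(u_h(\sigma_h)-u(\sigma_h))\|_{L^2(\Omega)}\leq\lambda^{-2}\|\nabla(u(\sigma_h)-\chi)\|_{L^2(\Omega)}$. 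Choosing $\chi=R_h u(\sigma_h)$ and using the $H^1$-stability $\|\nabla R_h v\|_{L^2(\Omega)}\leq\|\nabla v\|_{L^2(\Omega)}$ of the Ritz projection, I would bound this best-approximation error by $2\|\nabla(u(\sigma_h)-u(\sigma))\|_{L^2(\Omega)}+\|\nabla(u(\sigma)-R_h u(\sigma))\|_{L^2(\Omega)}$, both terms tending to zero: the first by Lemma \ref{lem:cont-of-sol-op}(ii) and the second by the approximation property \eqref{eqn:approx} applied to the \emph{fixed} function $u(\sigma)$.

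The hard part is exactly this last step: bounding the discretization error while the target $u(\sigma_h)$ itself moves with the mesh. A plain C\'ea estimate only controls the error toward a fixed function, so the key device is to rewrite the best-approximation error of the moving target $u(\sigma_h)$ in terms of the \emph{limit} $u(\sigma)$, using the uniform $H^1$-stability of $R_h$; this decouples the mesh refinement, which acts on the fixed $u(\sigma)$ via \eqref{eqn:approx}, from the coefficient motion, which is handled by Lemma \ref{lem:cont-of-sol-op}. Here the uniform discrete $W^{1,q}(\Omega)$ bound of Lemma \ref{lem:dis-Meyers}, $\|\nabla u_h\|_{L^q(\Omega)}\leq C$ with $q>2$ independent of $h$, plays the role of the continuous Meyers estimate: it provides the higher integrability needed to treat all products by H\"older's inequality exactly as in Lemma \ref{lem:H-cont}(ii), and renders the bounds uniform in $h$. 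Assembling the two reductions gives $\nabla u_h(\sigma_h)\to\nabla u(\sigma)$ in $L^2(\Omega)$ (indeed in $L^q(\Omega)$), and feeding this back into the decomposition of $H_h(\sigma_h)-H(\sigma)$ yields $H_h(\sigma_h)\to H(\sigma)$ in $L^1(\Omega)$, as claimed.
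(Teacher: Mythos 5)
Your proof is correct, but it is organized differently from the paper's. The paper establishes $u_h(\sigma_h)\to u(\sigma)$ in $H^1(\Omega)$ in one shot: subtracting the two weak formulations and testing with $R_hu-u_h$ yields the single identity
\[
\int_\Omega\sigma_h|\nabla(u-u_h)|^2\,dx=-\int_\Omega(\sigma-\sigma_h)\nabla u\cdot\nabla(R_hu-u_h)\,dx+\int_\Omega\sigma_h\nabla(u-u_h)\cdot\nabla(u-R_hu)\,dx,
\]
whose two terms are handled by the Meyers integrability of $\nabla u$ from Theorem \ref{thm:reg} and by the approximation property \eqref{eqn:approx} of $R_h$, respectively. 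You instead route through the intermediate solution $u(\sigma_h)$, invoking Lemma \ref{lem:cont-of-sol-op}(ii) for the pure coefficient perturbation and a C\'ea estimate at frozen coefficient for the Galerkin error; your device of re-expanding the moving best-approximation target $u(\sigma_h)$ around the fixed limit $u(\sigma)$ via the $H^1$-stability of the Ritz projection is exactly what is needed to decouple mesh refinement from coefficient motion, and it is sound. The two final splittings of $H_h(\sigma_h)-H(\sigma)$ also differ: the paper puts $|\nabla u_h|^2$ in the coefficient-difference term and therefore genuinely needs the uniform discrete $W^{1,q}(\Omega)$ bound of Lemma \ref{lem:dis-Meyers}, whereas your splitting (the one from \eqref{eqn:H-split}) puts $|\nabla u|^2$ there and needs only the continuous estimate --- so your closing appeal to Lemma \ref{lem:dis-Meyers} is actually superfluous in your own argument. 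The paper's identity is shorter and self-contained; your modular version reuses the already-proved continuity of the solution map and makes the two error sources (coefficient versus discretization) explicit.
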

\begin{proof}
By Lax-Milgram theorem, $u$ and $u_h$ are uniformly bounded in $H^1(\Omega)$ independent of $h$. Setting the test
function $\chi=R_hu-u_h\in V_h\subset V$ in the weak formulations and then subtracting them give
\begin{equation*}
   \begin{aligned}
      \int_\Omega\sigma_h|\nabla(u-u_h)|^2\,dx=&-\int_\Omega(\sigma-\sigma_h)\nabla u\cdot\nabla(R_hu-u_h)\,dx \\
       &+
       \int_\Omega\sigma_h\nabla(u-u_h)\cdot\nabla(u-R_h u)\,dx:= {\rm I} + {\rm II}.
   \end{aligned}
\end{equation*}
It suffices to estimate the two terms $ \rm I$ and $ \rm II$. For
the term $ \rm I$, H\"{o}lder's inequality gives
\begin{equation*}
 |{\rm I}|\leq \|\sigma-\sigma_h\|_{L^p(\Omega)}\|\nabla u\|_{L^q(\Omega)}\|\nabla (R_hu-u_h)\|_{L^2(\Omega)},
\end{equation*}
where the exponent $q>2$ is from Theorem \ref{thm:reg}, and $p^{-1}+q^{-1}=2^{-1}$. Further, we note
\begin{equation*}
   \begin{aligned}
      \|\nabla(R_hu-u_h)\|_{L^2(\Omega)}
      &\leq C(\|u\|_{H^1(\Omega)}+\|u_h\|_{H^1(\Omega)})\leq C.
   \end{aligned}
\end{equation*}
Thus, $ \rm I \rightarrow0$ as $h\to0^+$, by the bound on  $\sigma_h$ and the convergence $\sigma_h\to \sigma^*$ in  $L^r(\Omega)$. Further, by the bound on $\sigma_h$,
\begin{equation*}
   \begin{aligned}
      |{\rm II}| &\leq \|\sigma_h\|_{L^\infty(\Omega)}\|\nabla(u-u_h)\|_{L^2(\Omega)}\|\nabla(u-R_hu)\|_{L^2(\Omega)}\\
         & \leq \lambda^{-1}\|\nabla(u-u_h)\|_{L^2(\Omega)}\|\nabla(u-{R}_hu)\|_{L^2(\Omega)},
   \end{aligned}
\end{equation*}
which tends to zero, in view of \eqref{eqn:approx}. These two estimates imply $u_h\to u$ in $H^1(\Omega)$.
Hence,
\begin{equation*}
  \begin{aligned}
    \|H_h(\sigma_h) - H(\sigma)\|_{L^1(\Omega)} &= \int_\Omega |\sigma_h|\nabla u_h(\sigma_h)|^2 - \sigma|\nabla u(\sigma)|^2|\,dx\\
    & \leq\int_\Omega |\sigma_h-\sigma|\,|\nabla u_h(\sigma_h)|^2\,dx +\int_\Omega\sigma|(|\nabla u_h(\sigma_h)|^2-|\nabla u(\sigma)|^2)|\,dx:={\rm III}+{\rm IV}
    \end{aligned}
\end{equation*}
By Lemma \ref{lem:dis-Meyers} and the $L^\infty(\Omega)$ bound on $\mathcal{A}_h$, $\lim_{h\to0^+}{\rm III} =0$.
The term $\rm IV$ also tends to zero due to $u_h\to u$ in $H^1(\Omega)$. This completes the proof.
\end{proof}

The next result gives the convergence of the discrete minimizers $\sigma_h^*$.
\begin{theorem}\label{thm:conv-fem}
The sequence $ \{\sigma_h^*\in X_h\}_{h>0}$ of minimizers to the discrete functionals $\mathcal{J}_{\beta,h}(\sigma_h)$ contains a subsequence converging in $L^1(\Omega)$ to a minimizer of $\mathcal{J}_\beta(\sigma)$ as $h\to0^+$.
\end{theorem}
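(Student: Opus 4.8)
The plan is to follow the structure of a $\Gamma$-convergence argument, combining a compactness step with a liminf inequality and the construction of a recovery sequence, and then using the minimality of the discrete minimizers to tie the two bounds together, exactly in the spirit of the continuous existence proof in Theorem \ref{thm:minimizer}.

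First I would establish uniform bounds and extract a limit. Each $\sigma_h^*\in\mathcal{A}_h\subset\mathcal{A}$ obeys the box constraint $\lambda\le\sigma_h^*\le\lambda^{-1}$ a.e., so the sequence is uniformly bounded in $L^\infty(\Omega)$, hence in $L^1(\Omega)$. To control the total variation, I would bound the minimal values $\mathcal{J}_{\beta,h}(\sigma_h^*)$ from above uniformly in $h$ by evaluating $\mathcal{J}_{\beta,h}$ at a fixed admissible competitor (e.g.\ the nodal interpolant of a constant, which lies in $\mathcal{A}_h$ and has vanishing total variation); together with the nonnegativity of the fidelity term this gives $|\sigma_h^*|_{\rm TV}\le C$, whence $\|\sigma_h^*\|_{BV(\Omega)}\le C$ uniformly. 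The compact embedding $BV(\Omega)\hookrightarrow L^1(\Omega)$ from Lemma \ref{lem:BV-props}(i) then yields a subsequence, still denoted $\{\sigma_h^*\}$, with $\sigma_h^*\to\sigma^*$ in $L^1(\Omega)$; passing to a further a.e.\ convergent subsequence shows $\sigma^*\in\mathcal{A}$.

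Next I would prove the liminf inequality $\mathcal{J}_\beta(\sigma^*)\le\liminf_{h\to0^+}\mathcal{J}_{\beta,h}(\sigma_h^*)$. Since $\sigma_h^*\to\sigma^*$ in $L^1(\Omega)$, Lemma \ref{lem:polyhed} (with $r=1$) gives $H_h(\sigma_h^*)\to H(\sigma^*)$ in $L^1(\Omega)$, so the fidelity terms converge, $\|H_h(\sigma_h^*)-z\|_{L^1(\Omega)}\to\|H(\sigma^*)-z\|_{L^1(\Omega)}$. The lower semicontinuity of the total variation with respect to $L^1(\Omega)$ convergence, Lemma \ref{lem:BV-props}(ii), gives $|\sigma^*|_{\rm TV}\le\liminf_h|\sigma_h^*|_{\rm TV}$, and adding the two yields the claimed inequality.

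Finally, I would show $\sigma^*$ minimizes $\mathcal{J}_\beta$ by exhibiting, for an arbitrary competitor $\eta\in\mathcal{A}$, a recovery sequence $\eta_h\in\mathcal{A}_h$ with $\eta_h\to\eta$ in $L^1(\Omega)$ and $\mathcal{J}_{\beta,h}(\eta_h)\to\mathcal{J}_\beta(\eta)$, and then chaining
\begin{equation*}
\mathcal{J}_\beta(\sigma^*)\le\liminf_{h\to0^+}\mathcal{J}_{\beta,h}(\sigma_h^*)\le\limsup_{h\to0^+}\mathcal{J}_{\beta,h}(\eta_h)=\mathcal{J}_\beta(\eta),
\end{equation*}
where the middle step uses the minimality $\mathcal{J}_{\beta,h}(\sigma_h^*)\le\mathcal{J}_{\beta,h}(\eta_h)$; the arbitrariness of $\eta$ then shows $\sigma^*$ solves \eqref{eq:opt:problem}. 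I expect the recovery sequence to be the main obstacle, since $\eta_h$ must simultaneously lie in the piecewise-linear space $X_h$, respect the box constraint, converge strongly enough in $L^1$ to invoke Lemma \ref{lem:polyhed}, and satisfy $|\eta_h|_{\rm TV}\to|\eta|_{\rm TV}$. My plan here is a two-stage diagonal argument: use the density of $C^\infty(\overline\Omega)$ in the intermediate topology of $BV(\Omega)$ (Lemma \ref{lem:BV-props}(iii)) to approximate $\eta$ by smooth $\eta_n$, then compose with the pointwise truncation $P(s)=\min(\max(s,\lambda),\lambda^{-1})$, which is $1$-Lipschitz and hence neither increases the total variation nor destroys the $L^1$ convergence, to land back in $\mathcal{A}$. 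For these Lipschitz functions the nodal interpolation estimate \eqref{eqn:approx} gives $I_hP(\eta_n)\to P(\eta_n)$ in $W^{1,p}(\Omega)$ for $p>d$ as $h\to0^+$, with the interpolants automatically in $\mathcal{A}_h$ because nodal interpolation of a $[\lambda,\lambda^{-1}]$-valued continuous function preserves these bounds, so that $|I_hP(\eta_n)|_{\rm TV}\to|P(\eta_n)|_{\rm TV}$ and $I_hP(\eta_n)\to P(\eta_n)$ in $L^1(\Omega)$. A diagonal selection in $n$ and $h$ then produces $\eta_h$ with the required convergence of the $L^1$-norm, of the total variation, and, via Lemma \ref{lem:polyhed}, of the fidelity term.
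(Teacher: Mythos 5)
Your proposal is correct and follows essentially the same route as the paper: uniform $BV$ bounds from a constant competitor, compact embedding into $L^1(\Omega)$, lower semicontinuity of the total variation plus Lemma \ref{lem:polyhed} for the liminf inequality, and a recovery sequence built by smoothing (Lemma \ref{lem:BV-props}(iii)), pointwise truncation onto $[\lambda,\lambda^{-1}]$, and nodal interpolation via \eqref{eqn:approx}. The only cosmetic difference is that you package the final step as a diagonal selection in $(n,h)$, whereas the paper passes to the limit first in $h$ and then in $\epsilon$.
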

\begin{proof}
Since $\sigma_h\equiv1\in\mathcal{A}_h$ for all $h$, the minimizing property of
$\sigma_h^\ast$ shows that the sequence $\{\mathcal{J}_{\beta,h}(\sigma_h^\ast)\}$ is uniformly
bounded. Thus $\{|\sigma_h^\ast|_{\rm TV}\}$ is uniformly bounded, and there exists a
subsequence, again denoted by $\{\sigma_h^\ast\}$, and some $\sigma^\ast \in\mathcal{A}$, such
that $\sigma_h^\ast\rightarrow \sigma^\ast$ weak $\ast$ in $\mathrm{BV}(\Omega)$. By Lemma \ref{lem:BV-props}(i),
$\sigma_h^\ast\rightarrow \sigma^\ast$ in $L^1(\Omega)$. Lemma \ref{lem:BV-props}(ii) implies
$|\sigma^\ast|_{\rm TV}\leq \liminf_{h\rightarrow0}|\sigma_h^\ast|_{\rm TV}.$ This and Lemma \ref{lem:polyhed}
imply
\begin{equation}\label{eqn:lsc}
   \mathcal{J}_\beta(\sigma^*) \leq \liminf_{h\to0^+} \mathcal{J}_{\beta,h}(\sigma_h^*).
\end{equation}
For any $\sigma\in\mathcal{A}$, Lemma \ref{lem:BV-props}(iii) implies the existence of a sequence
$\{\sigma^\epsilon\}\subset C^\infty(\overline{\Omega})$ such that
$\int_\Omega |\sigma^\epsilon-\sigma|dx<\epsilon$ and $\left|\int_\Omega|\nabla \sigma^\epsilon|dx
-\int_\Omega|D\sigma|\right|<\epsilon$. Let $\tilde{\sigma}_\epsilon =P_{[\lambda,\lambda^{-1}]}
\sigma^\epsilon$, where $P_{[\lambda,\lambda^{-1}]}$ denotes pointwise projection.
Since $\nabla \tilde{\sigma}^\epsilon =\nabla \sigma^\epsilon\chi_{\Omega_\epsilon}$ (with
the set $\Omega_\epsilon = \{x\in\Omega: \lambda\leq \sigma^\epsilon\leq\lambda^{-1}\}$), which is
uniformly bounded, and thus $\tilde{\sigma}^\epsilon\in \mathcal{A}\cap W^{1,\infty}(\Omega)$.
The minimizing property of
$\sigma_h^\ast\in\mathcal{A}_h$ gives $\mathcal{J}_h(\sigma_h^\ast)\leq \mathcal{J}_{\beta,h}(\mathcal{I}_h\tilde{\sigma}^\epsilon)$
for any $\epsilon>0$. In view of \eqref{eqn:approx}, since $\tilde{\sigma}^\epsilon\in W^{1,\infty}(\Omega)$,
we deduce
$\lim_{h\rightarrow0^+}\mathcal{I}_h\tilde{\sigma}^\epsilon = \tilde{\sigma}^\epsilon$ in $ W^{1,1}(\Omega)$.
Letting $h$ to zero, and  Lemma \ref{lem:polyhed} and
\eqref{eqn:lsc} yield $\mathcal{J}_\beta(\sigma^\ast)\leq \mathcal{J}_\beta(\tilde{\sigma}^\epsilon)$. Then, by the contraction property of $P_{[\lambda,\lambda^{-1}]}$, there hold
\begin{equation*}
  \begin{aligned}
   \int_\Omega |\nabla\tilde{\sigma }^\epsilon|\,dx &= \int_{\Omega_\epsilon}|\nabla\sigma^\epsilon|\,dx \leq \int_\Omega
    |\nabla \sigma^\epsilon|\,dx \leq \int_\Omega |D\sigma|+\epsilon,\\
    \int_\Omega |\tilde{\sigma}^\epsilon-\sigma|\,dx &\leq \int_\Omega|\sigma^\epsilon-\sigma|\,dx<\epsilon.
  \end{aligned}
\end{equation*}
Letting $\epsilon$ to zero and Lemma \ref{lem:H-cont}
 imply $\mathcal{J}_\beta(\sigma^\ast)\leq \mathcal{J}_\beta(\sigma)$ for any
$\sigma\in\mathcal{A}$, completing the proof.
\end{proof}

Next we discuss the discretization of the linearized problem \eqref{eq:linearized-functional} at some fixed $\sigma_0
\in \mathcal{A}$. With the approximation $u_h\equiv u_h(\sigma_0)$ defined by
\begin{equation*}
  (\sigma_0\nabla u_h,\nabla \chi) = (f,\chi)_{L^2(\Gamma)}\quad \forall \chi\in V_h.
\end{equation*}
Then, for any $\kappa_h\in \mathcal{A}_{\sigma,h}$, find $v_h\equiv u_h'(\sigma_0)[\kappa_h]\in V_h$ such that
\begin{equation*}
  (\sigma_0\nabla v_h,\nabla \chi) = - (\kappa_h\nabla u_h,\nabla \chi)\quad \forall \chi\in V_h.
\end{equation*}
Last, the discrete linearized functional ${J}_{\beta,h}$ of
$J_{\beta}$ (by omitting the subscript $\sigma_0$) reads
\begin{equation*}
   J_{\beta,h}(\kappa_h) = \|H_h'(\sigma_0)[\kappa_h]+H_h(\sigma_0)-z\|_{L^1(\Omega)} + \beta|\sigma_0+\kappa_h|_{\rm TV},
\end{equation*}
where the linearized parameter-to-data map $H_h'(\sigma_0)[\kappa_h]$ is given by
\begin{equation*}
  H'_h(\sigma_0)[\kappa_h] = \kappa_h|\nabla u_h(\sigma_0)|^2 + 2\sigma_0\nabla u_h(\sigma_0)\cdot\nabla u'_h(\sigma_0)[\kappa_h].
\end{equation*}

We need the following convergence result.
\begin{lemma}\label{lem:fem-W1q}
There exists some $q>2$ such that for any $\sigma\in \mathcal{A}$, $u_h(\sigma)\to u(\sigma)$ in $W^{1,q}(\Omega)$ as $h\to0^+$.
\end{lemma}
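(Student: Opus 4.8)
The plan is to obtain the $W^{1,q}(\Omega)$ convergence from a quasi-optimality (C\'ea-type) estimate in the $W^{1,q}(\Omega)$-norm, followed by a density argument to drive the resulting best-approximation error to zero. First I would fix the exponent: by Theorem \ref{thm:reg} the continuous solution satisfies $u(\sigma)\in W^{1,q}(\Omega)$ for every $q\in(2,\min(Q,r))$, while the proof of Lemma \ref{lem:dis-Meyers} supplies the discrete inf--sup (Meyers) estimate \eqref{eqn:inf-sup} with an $h$-independent constant for some $q>2$ sufficiently close to $2$. I would take $q$ in the intersection so that both ingredients hold for the same exponent. It is worth noting that \eqref{eqn:inf-sup} uses only the two-sided bound $\lambda\le\sigma\le\lambda^{-1}$, hence it applies to the \emph{fixed} coefficient $\sigma\in\mathcal A$ here (which need not lie in $V_h$).

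The core step is quasi-optimality. Since $V_h\subset V$, testing the continuous weak form of \eqref{eqn:forward} and the discrete one \eqref{eqn:fem} with the same $\chi\in V_h$ (whose right-hand sides agree) yields the Galerkin orthogonality $(\sigma\nabla(u-u_h),\nabla\chi)=0$ for all $\chi\in V_h$. For an arbitrary $v_h\in V_h$ I would apply \eqref{eqn:inf-sup} to $u_h-v_h\in V_h$ and use orthogonality to replace $u_h$ by $u$ in the numerator, so that $(\sigma\nabla(u_h-v_h),\nabla w_h)=(\sigma\nabla(u-v_h),\nabla w_h)$ for every $w_h\in V_h$. H\"older's inequality bounds this by $\lambda^{-1}\|\nabla(u-v_h)\|_{L^q(\Omega)}\|\nabla w_h\|_{L^{q'}(\Omega)}$, and \eqref{eqn:inf-sup} then gives $\|u_h-v_h\|_{W^{1,q}(\Omega)}\le C\lambda^{-1}\|u-v_h\|_{W^{1,q}(\Omega)}$. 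A triangle inequality followed by taking the infimum over $v_h\in V_h$ produces the $h$-uniform estimate $\|u-u_h\|_{W^{1,q}(\Omega)}\le C\inf_{v_h\in V_h}\|u-v_h\|_{W^{1,q}(\Omega)}$.

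It remains to show the best-approximation error vanishes, and this is where the only genuine subtlety lies. Since $q$ is only slightly above $2$, in three dimensions $u$ need not be continuous, so the nodal interpolant $I_hu$ need not be defined and I would avoid interpolating $u$ directly. Instead, given $\varepsilon>0$ I would invoke the density of $C^\infty(\overline\Omega)$ in $W^{1,q}(\Omega)$ to choose $\phi$ with $\|u-\phi\|_{W^{1,q}(\Omega)}<\varepsilon$, and estimate $\inf_{v_h}\|u-v_h\|_{W^{1,q}(\Omega)}\le\|u-\phi\|_{W^{1,q}(\Omega)}+\|\phi-I_h\phi\|_{W^{1,q}(\Omega)}$. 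For the fixed smooth $\phi$ the interpolation error tends to zero by \eqref{eqn:approx}, applied with any exponent $p>\max(d,q)$ together with the continuous embedding $W^{1,p}(\Omega)\hookrightarrow W^{1,q}(\Omega)$ on the bounded domain $\Omega$. Combining this with the quasi-optimality estimate yields $\limsup_{h\to0^+}\|u-u_h\|_{W^{1,q}(\Omega)}\le C\varepsilon$ for every $\varepsilon>0$, hence the claimed convergence. The main obstacle is therefore not the algebra but securing the $h$-uniform discrete Meyers estimate \eqref{eqn:inf-sup} for the fixed low-regularity coefficient; once that is granted by Lemma \ref{lem:dis-Meyers}, the rest is a standard quasi-optimality-plus-density argument, with the single care that one interpolates a smooth surrogate rather than $u$ itself.
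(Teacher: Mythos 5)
Your proposal is correct and follows essentially the same route as the paper: Galerkin orthogonality plus the $h$-uniform discrete inf--sup estimate \eqref{eqn:inf-sup} yield the quasi-optimality bound $\|u-u_h\|_{W^{1,q}(\Omega)}\le C\inf_{v_h\in V_h}\|u-v_h\|_{W^{1,q}(\Omega)}$, and a density argument finishes. Your only addition is to spell out the final density step (smoothing $u$ before interpolating), which the paper compresses into the phrase ``density of $V_h$ in $W^{1,q}(\Omega)$''; this is a legitimate and careful elaboration, not a different proof.
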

\begin{proof}
Let $u=u(\sigma)$ and $u_h=u_h(\sigma)$.
Then for any $v_h\in V_h$, by the triangle inequality, there holds
\begin{equation*}
  \|u-u_h\|_{W^{1,q}(\Omega)} \leq \|u-v_h\|_{W^{1,q}(\Omega)} + \|v_h-u_h\|_{W^{1,q}(\Omega)}.
\end{equation*}
It follows the inequality \eqref{eqn:inf-sup} and Galerkin orthogonality that with $q^{-1}+q^{\prime-1}=1$
\begin{align*}
  \|u_h-v_h\|_{W^{1,q}(\Omega)}& \leq C\sup_{0\neq \chi\in V_h}\frac{(\sigma\nabla (u_h-v_h),\nabla\chi) }{\|\chi\|_{W^{1,q'}(\Omega)}}\\
   & =  C\sup_{0\neq \chi\in V_h}\frac{(\sigma\nabla (u-v_h),\nabla\chi) }{\|\chi\|_{W^{1,q'}(\Omega)}} \leq C\|u-v_h\|_{W^{1,q}(\Omega)},
\end{align*}
where the last inequality is due to H\"{o}lder's inequality and the uniform bound on $\mathcal{A}$. Since
the choice of $v_h\in V$ is arbitrary, combining the last two estimates gives
\begin{equation*}
  \|u-u_h\|_{W^{1,q}(\Omega)} \leq C\inf_{v_h\in V_h}\|u-v_h\|_{W^{1,q}(\Omega)}.
\end{equation*}
Now the desired assertion follows by the density of $V_h$ in $W^{1,q}(\Omega)$.
\end{proof}

We have the following convergence for the finite element approximation.
\begin{lemma}\label{lem:polyhed-linear}
Let the sequence $\{\kappa_h\}_{h>0}\subset \mathcal{A}_{\sigma_0,h}$ converge
in $L^r(\Omega), \ r\geq 1$, to some $\kappa\in\mathcal{A}_\sigma$ as $h$ tends to zero.
Then $H_h'(\sigma_0)[\kappa_h] + H_h(\sigma_0)\to
H'(\sigma_0)[\kappa]+H(\sigma_0)$ in $L^1(\Omega)$ as $h\to0^+$.
\end{lemma}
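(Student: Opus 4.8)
The plan is to establish the two convergences $H_h(\sigma_0)\to H(\sigma_0)$ and $H_h'(\sigma_0)[\kappa_h]\to H'(\sigma_0)[\kappa]$ in $L^1(\Omega)$ separately, since the asserted limit is their sum. Throughout I would fix an exponent $q>2$ small enough that both the discrete estimate \eqref{eqn:inf-sup} (Lemma~\ref{lem:dis-Meyers}) and the finite element convergence of Lemma~\ref{lem:fem-W1q} are available, and record that since $\sigma_0+\kappa_h$, $\sigma_0+\kappa$ and $\sigma_0$ all lie in $[\lambda,\lambda^{-1}]$ a.e., the increments $\kappa_h,\kappa$ are uniformly bounded in $L^\infty(\Omega)$; combined with $L^r(\Omega)$ convergence this upgrades to $\kappa_h\to\kappa$ in every $L^m(\Omega)$, $m<\infty$. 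For the term $H_h(\sigma_0)=\sigma_0|\nabla u_h(\sigma_0)|^2$, Lemma~\ref{lem:fem-W1q} gives $u_h(\sigma_0)\to u(\sigma_0)$ in $W^{1,q}(\Omega)$, so writing $|\nabla u_h|^2-|\nabla u|^2=\nabla(u_h-u)\cdot\nabla(u_h+u)$ and applying Hölder's inequality bounds this in $L^{q/2}(\Omega)\hookrightarrow L^1(\Omega)$ by $\|\nabla(u_h-u)\|_{L^q(\Omega)}(\|\nabla u_h\|_{L^q(\Omega)}+\|\nabla u\|_{L^q(\Omega)})\to0$; multiplying by $\sigma_0\in L^\infty(\Omega)$ yields $H_h(\sigma_0)\to H(\sigma_0)$. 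The first summand $\kappa_h|\nabla u_h|^2$ of the derivative is handled identically after the splitting $\kappa_h|\nabla u_h|^2-\kappa|\nabla u|^2=(\kappa_h-\kappa)|\nabla u_h|^2+\kappa(|\nabla u_h|^2-|\nabla u|^2)$, the first piece being controlled by $\|\kappa_h-\kappa\|_{L^{(q/2)'}(\Omega)}\,\||\nabla u_h|^2\|_{L^{q/2}(\Omega)}\to0$.

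The crux is the convergence of the discrete sensitivity $v_h:=u_h'(\sigma_0)[\kappa_h]$, which is delicate because both the data $\kappa_h$ and the discrete state $u_h$ vary with $h$. The plan is to introduce the auxiliary function $\tilde v_h\in V_h$ solving $(\sigma_0\nabla \tilde v_h,\nabla\chi)=-(\kappa\nabla u,\nabla\chi)$ for all $\chi\in V_h$, i.e. the Galerkin approximation of the true sensitivity $v=u'(\sigma_0)[\kappa]$ with the fixed, limiting right-hand side. Since $\kappa\nabla u\in L^q(\Omega)$ (as $\kappa\in L^\infty(\Omega)$ and $\nabla u\in L^s(\Omega)$ with $s>q$ from Theorem~\ref{thm:reg}), Theorem~\ref{thm:reg} gives $v\in W^{1,q}(\Omega)$, and the argument of Lemma~\ref{lem:fem-W1q} — built on \eqref{eqn:inf-sup} and Galerkin orthogonality — applies verbatim to the operator $\sigma_0$ with this source to yield $\tilde v_h\to v$ in $W^{1,q}(\Omega)$. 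For the remaining discrete error $v_h-\tilde v_h\in V_h$, subtracting the defining equations gives $(\sigma_0\nabla(v_h-\tilde v_h),\nabla\chi)=-(\kappa_h\nabla u_h-\kappa\nabla u,\nabla\chi)$ for all $\chi\in V_h$, whence the $h$-uniform inf--sup estimate \eqref{eqn:inf-sup} and Hölder's inequality give
\[
\|v_h-\tilde v_h\|_{W^{1,q}(\Omega)}\leq C\|\kappa_h\nabla u_h-\kappa\nabla u\|_{L^q(\Omega)}.
\]
Splitting $\kappa_h\nabla u_h-\kappa\nabla u=\kappa_h\nabla(u_h-u)+(\kappa_h-\kappa)\nabla u$ and bounding the first piece by $\lambda^{-1}\|\nabla(u_h-u)\|_{L^q(\Omega)}\to0$ (Lemma~\ref{lem:fem-W1q}) and the second by $\|\kappa_h-\kappa\|_{L^m(\Omega)}\|\nabla u\|_{L^s(\Omega)}\to0$ (Hölder with $s>q$ and the upgraded $L^m$ convergence) shows $v_h\to v$ in $W^{1,q}(\Omega)$.

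Finally, with $u_h(\sigma_0)\to u(\sigma_0)$ and $v_h\to v$ both in $W^{1,q}(\Omega)$, the second summand $2\sigma_0\nabla u_h\cdot\nabla v_h$ converges to $2\sigma_0\nabla u\cdot\nabla v$ in $L^{q/2}(\Omega)\hookrightarrow L^1(\Omega)$, again by writing $\nabla u_h\cdot\nabla v_h-\nabla u\cdot\nabla v=\nabla(u_h-u)\cdot\nabla v_h+\nabla u\cdot\nabla(v_h-v)$ and applying Hölder together with $\sigma_0\in L^\infty(\Omega)$ and the uniform $W^{1,q}$ bounds. Collecting the pieces gives the claim. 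I expect the main obstacle to be precisely the two-scale nature of $v_h$: one must decouple the variation of the source from the variation of the mesh, and the only leverage surviving as $h\to0$ is the $h$-uniform Meyers-type estimate \eqref{eqn:inf-sup}. Verifying that the limiting source $\kappa\nabla u$ has the $L^q$ regularity needed to invoke Theorem~\ref{thm:reg} for $\tilde v_h\to v$, and that the exponents coming from Lemmas~\ref{lem:dis-Meyers} and~\ref{lem:fem-W1q} can be chosen simultaneously, is the step demanding the most care.
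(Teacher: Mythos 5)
Your proposal is correct and follows essentially the same route as the paper's proof: both treat $H_h(\sigma_0)$ via Lemma \ref{lem:fem-W1q}, split $\kappa_h|\nabla u_h|^2-\kappa|\nabla u|^2$ the same way, and handle the discrete sensitivity by introducing the auxiliary Galerkin solution with the frozen limiting source $\kappa\nabla u(\sigma_0)$ and estimating the remaining discrete error through the perturbation $\kappa_h\nabla u_h-\kappa\nabla u$. The only (cosmetic) difference is that you control that error in $W^{1,q}(\Omega)$ via the $h$-uniform inf--sup estimate \eqref{eqn:inf-sup}, whereas the paper is content with the energy estimate in $H^1(\Omega)$, which already suffices for $L^1(\Omega)$ convergence of the product term.
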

\begin{proof}
By Lemma \ref{lem:fem-W1q},  $u_h\equiv u_h(\sigma_0) \to u(\sigma_0):=u$
in $W^{1,q}(\Omega)$ as $h\to 0^+$. Thus, $H_h(\sigma_0) = \sigma_0|\nabla u_h|^2 \to \sigma_0|\nabla u|^2
= H(\sigma_0)$ in $L^1(\Omega)$, and it suffices to show
$H_h'(\sigma_0)[\kappa_h]\to H'(\sigma_0)[\kappa]$. By the triangle inequality and $L^\infty(\Omega)$
bound on $\mathcal{A}_{\sigma_0}$ and $\mathcal{A}_{\sigma_0,h}$,
\begin{align*}
  \|H_h'(\sigma_0)[\kappa_h]-H'(\sigma_0)[\kappa]\|_{L^1(\Omega)}&\leq \|\kappa_h|\nabla u_h(\sigma_0)|^2-\kappa|\nabla u(\sigma_0)|^2\|_{L^1(\Omega)} \\
    & \quad + C\|\nabla u_h(\sigma_0)\cdot\nabla u_h'(\sigma_0)[\kappa_h]-\nabla u(\sigma_0)\cdot\nabla u'(\sigma_0)[\kappa]\|_{L^2(\Omega)}:={\rm I}+{\rm II}.
\end{align*}
By H\"{o}lder's inequality, the term ${\rm I}$ is bounded by (with $q>2$ from Lemma \ref{lem:dis-Meyers}, and $p^{-1}+2q^{-1}=1$)
\begin{align*}
  {\rm I} 
     & \leq \|\kappa_h-\kappa\|_{L^p(\Omega)}\|\nabla u_h(\sigma_0)\|_{L^q(\Omega)}^2 + \|\kappa\|_{L^\infty(\Omega)}\||\nabla u_h(\sigma_0)|^2-|\nabla u(\sigma_0)|^2\|_{L^1(\Omega)},
\end{align*}
where both  terms tend to zero, since $\kappa_h\to \kappa$ in $L^r(\Omega)$ and by Lemma \ref{lem:fem-W1q}, $ u_h(\sigma_0)\to u(\sigma_0)$ in $W^{1,q}(\Omega)$. Meanwhile, the term ${\rm
II}$ is bounded by
\begin{align*}
  {\rm II} 
  & \leq C\|\nabla (u_h(\sigma_0)-u(\sigma_0))\|_{L^2(\Omega)}\|\nabla u_h'(\sigma_0)[\kappa_h]\|_{L^2(\Omega)}\\
    &\quad +C\|\nabla u(\sigma_0)\|_{L^2(\Omega)}\|\nabla (u'_h(\sigma_0)[\kappa_h]- u'(\sigma_0)[\kappa])\|_{L^2(\Omega)}
  := {\rm III} + {\rm IV}.
\end{align*}
By the uniform bound on $\kappa_h$, $\|u_h'(\sigma_0)[\kappa_h]\|_{L^2(\Omega)}\leq C$ for some $C$ independent of $h$, and thus
 the term ${\rm III}\to 0$ as $h\to0^+$, in view of Lemma \ref{lem:fem-W1q}.
To bound the term ${\rm IV}$, let $w_h \in V_h$ satisfy
\begin{equation*}
   (\sigma_0\nabla w_h,\nabla \chi) = (\kappa\nabla u(\sigma_0),\nabla \chi)\quad \forall \chi\in V_h.
\end{equation*}
By Lemma \ref{lem:fem-W1q}, there holds $\|\nabla w_h - \nabla u'(\sigma_0)[\kappa]
\|_{L^2(\Omega)}\to 0$. Further, $v_h=w_h-u_h'(\sigma_0)[\kappa_h]\in V_h$ satisfies
\begin{align*}
  (\sigma_0\nabla v_h,\nabla \chi) = (\kappa_h\nabla u_h(\sigma_0)-\kappa\nabla u(\sigma_0),\nabla \chi)\quad \forall \chi\in V_h.
\end{align*}
Letting $\chi=v_h$ and applying Cauchy-Schwarz inequality lead to
\begin{equation*}
  \|\nabla v_h\|_{L^2(\Omega)}\le  C\|\kappa_h\nabla u_h(\sigma_0)-\kappa\nabla u(\sigma_0)\|_{L^2(\Omega)}.
\end{equation*}
Meanwhile, by Lemma \ref{lem:fem-W1q}, the
$L^\infty(\Omega)$ bound on $\kappa_h$ and the fact $\kappa_h\to \kappa$ in $L^1(\Omega)$,
\begin{equation}\label{eqn:res-to0}
  \|\kappa_h\nabla u_h(\sigma_0)-\kappa \nabla u(\sigma_0)\|_{L^2(\Omega)} \to 0.
\end{equation}
Combining these estimates shows ${\rm IV}\to0$ as $h\to 0^+$, which completes the proof of the lemma.
\end{proof}

Last, we state the convergence of the discrete approximations to the linearized functional $J_{\beta}$. The
proof is identical with that for Theorem \ref{thm:conv-fem}, but with Lemma \ref{lem:polyhed-linear} in place
of Lemma \ref{lem:polyhed}.
\begin{theorem}
The sequence $\{\kappa_h^*\in\mathcal{A}_{\sigma_0,h}\}_{h>0} $ of minimizers to the the discrete functionals $J_{\sigma_0,\beta,h}(\kappa_h)$ contains a subsequence converging in $L^1(\Omega)$ to a minimizer of the functional $J_{\sigma_0,\beta}(\kappa)$ as $h\to0^+$.
\end{theorem}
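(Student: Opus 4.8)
The plan is to imitate the proof of Theorem~\ref{thm:conv-fem} almost verbatim, with the increment $\kappa$ playing the role of the conductivity, the total variation acting on $\sigma_0+\kappa$ rather than on $\sigma$, and Lemma~\ref{lem:polyhed-linear} replacing Lemma~\ref{lem:polyhed} throughout. Existence of each discrete minimizer $\kappa_h^*$ follows by the direct method over the finite-dimensional admissible set $\mathcal{A}_{\sigma_0,h}$, exactly as for Theorem~\ref{thm:minimizer}.

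First I would derive a uniform bound. Since $\sigma_0\in\mathcal{A}$ we have $\kappa_h\equiv0\in\mathcal{A}_{\sigma_0,h}$, so the minimizing property gives $J_{\sigma_0,\beta,h}(\kappa_h^*)\le J_{\sigma_0,\beta,h}(0)$, and the right-hand side is bounded uniformly in $h$ because $H_h(\sigma_0)\to H(\sigma_0)$ in $L^1(\Omega)$ by Lemma~\ref{lem:polyhed-linear} while $|\sigma_0|_{\rm TV}$ is fixed. Hence $\{|\sigma_0+\kappa_h^*|_{\rm TV}\}$ is uniformly bounded, and by the triangle inequality together with $\sigma_0\in BV(\Omega)$ so is $\{|\kappa_h^*|_{\rm TV}\}$. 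Combined with the $L^\infty$ bound $\lambda-\lambda^{-1}\le\kappa_h^*\le\lambda^{-1}-\lambda$ inherited from $\sigma_0+\kappa_h^*\in\mathcal{A}$, this gives a uniform $BV(\Omega)$ bound. By Lemma~\ref{lem:BV-props}(i) a subsequence, still denoted $\{\kappa_h^*\}$, converges in $L^1(\Omega)$ and (after a further subsequence) a.e.\ to some $\kappa^*$; the pointwise box constraint passes to the limit, so $\kappa^*\in\mathcal{A}_{\sigma_0}$.

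Next I would establish the lower bound. As $\kappa_h^*\to\kappa^*$ in $L^1(\Omega)$, Lemma~\ref{lem:polyhed-linear} yields $H_h'(\sigma_0)[\kappa_h^*]+H_h(\sigma_0)\to H'(\sigma_0)[\kappa^*]+H(\sigma_0)$ in $L^1(\Omega)$, so the fidelity term converges, while $\sigma_0+\kappa_h^*\to\sigma_0+\kappa^*$ in $L^1(\Omega)$ and Lemma~\ref{lem:BV-props}(ii) give $|\sigma_0+\kappa^*|_{\rm TV}\le\liminf_h|\sigma_0+\kappa_h^*|_{\rm TV}$; together these produce
\begin{equation*}
  J_{\sigma_0,\beta}(\kappa^*)\le\liminf_{h\to0^+}J_{\sigma_0,\beta,h}(\kappa_h^*).
\end{equation*}
For optimality, given any $\kappa\in\mathcal{A}_{\sigma_0}$ I would set $\sigma:=\sigma_0+\kappa\in\mathcal{A}$ and, exactly as in Theorem~\ref{thm:conv-fem}, use Lemma~\ref{lem:BV-props}(iii) and the pointwise projection $P_{[\lambda,\lambda^{-1}]}$ to build $\tilde\sigma^\epsilon\in\mathcal{A}\cap W^{1,\infty}(\Omega)$ with $\|\tilde\sigma^\epsilon-\sigma\|_{L^1(\Omega)}<\epsilon$ and $|\tilde\sigma^\epsilon|_{\rm TV}\le|\sigma|_{\rm TV}+\epsilon$. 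Taking a competitor with $\sigma_0+\kappa_h=I_h\tilde\sigma^\epsilon$ (so that $I_h\tilde\sigma^\epsilon\in\mathcal{A}_h$ by convexity of the nodal interpolation), the minimizing property of $\kappa_h^*$, the approximation property \eqref{eqn:approx}, and Lemma~\ref{lem:polyhed-linear} give $J_{\sigma_0,\beta}(\kappa^*)\le J_{\sigma_0,\beta}(\tilde\sigma^\epsilon-\sigma_0)$ in the limit $h\to0^+$; letting $\epsilon\to0^+$ and invoking the continuity of the forward map (Lemma~\ref{lem:H-cont}) yields $J_{\sigma_0,\beta}(\kappa^*)\le J_{\sigma_0,\beta}(\kappa)$ for every $\kappa\in\mathcal{A}_{\sigma_0}$.

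The step I expect to be most delicate is the recovery-sequence (limsup) construction. Because $\sigma_0$ is fixed and need not lie in $X_h$, one cannot mollify $\kappa$ directly; the argument must instead be carried out on the sum $\sigma=\sigma_0+\kappa$, approximating it by projected smooth functions and only afterwards subtracting $\sigma_0$, and one must check that the resulting competitor is admissible and that the intermediate-topology convergence of $\sigma_0+\kappa_h$ is preserved so that the TV term converges correctly. The convergence of the linearized fidelity along this competitor is precisely what Lemma~\ref{lem:polyhed-linear} supplies, which is why that lemma is the workhorse replacing Lemma~\ref{lem:polyhed}.
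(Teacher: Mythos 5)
Your proposal is correct and follows exactly the route the paper intends: the paper gives no separate proof for this theorem, stating only that it is identical to that of Theorem \ref{thm:conv-fem} with Lemma \ref{lem:polyhed-linear} replacing Lemma \ref{lem:polyhed}, which is precisely the substitution you carry out (uniform $BV$ bound via the competitor $\kappa_h\equiv 0$, compactness and lower semicontinuity, and the projected-mollification recovery sequence applied to $\sigma_0+\kappa$). Your closing remark about the recovery sequence is a reasonable observation of the one point the paper's ``identical proof'' claim glosses over, and your resolution of it is sound.
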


\bibliographystyle{abbrv}
\bibliography{main}

\begin{thebibliography}{10}

\bibitem{Adams2003}
R.~A. Adams and J.~J.~F. Fournier.
\newblock {\em Sobolev {S}paces}.
\newblock Elsevier/Academic Press, Amsterdam, second edition, 2003.

\bibitem{AdesokanKnudsenKrishnanRoy2018}
B.~Adesokan, K.~Knudsen, V.~P. Krishnan, and S.~Roy.
\newblock A fully non-linear optimization approach to acousto-electric
  tomography.
\newblock Preprint, arXiv:1804.02507, 2018.

\bibitem{AlbertiCapdeboscqBook}
G.~S. Alberti and Y.~Capdeboscq.
\newblock {\em Lectures on {E}lliptic {M}ethods for {H}ybrid {I}nverse
  {P}roblems}.
\newblock Soci\'et\'e Math\'ematique de France, Paris, 2018.

\bibitem{AmbrosioFuscoPallara:2000}
L.~Ambrosio, N.~Fusco, and D.~Pallara.
\newblock {\em Functions of {B}ounded {V}ariation and {F}ree {D}iscontinuity
  {P}roblems}.
\newblock The Clarendon Press, Oxford University Press, New York, 2000.

\bibitem{Ammari:2008}
H.~Ammari.
\newblock {\em An {I}ntroduction to {M}athematics of {E}merging {B}iomedical
  {I}maging}.
\newblock Springer, Berlin, 2008.

\bibitem{Ammari2007}
H.~Ammari, E.~Bonnetier, Y.~Capdeboscq, M.~Tanter, and M.~Fink.
\newblock Electrical impedance tomography by elastic deformation.
\newblock {\em SIAM J. Appl. Math.}, 68(6):1557--1573, 2008.

\bibitem{AstalaEaracoSzekelyhidi:2008}
K.~Astala, D.~Faraco, and L.~Sz\'ekelyhidi, Jr.
\newblock Convex integration and the {$L^p$} theory of elliptic equations.
\newblock {\em Ann. Sc. Norm. Super. Pisa Cl. Sci. (5)}, 7(1):1--50, 2008.

\bibitem{Attouch2006}
H.~Attouch, G.~Buttazzo, and G.~Michaille.
\newblock {\em Variational {A}nalysis in {S}obolev and {BV} {S}paces}.
\newblock SIAM, Philadelphia, PA, 2006.

\bibitem{Bal2013}
G.~Bal.
\newblock Hybrid inverse problems and internal functionals.
\newblock In {\em Inverse problems and applications: inside out. {II}},
  volume~60 of {\em Math. Sci. Res. Inst. Publ.}, pages 325--368. Cambridge
  Univ. Press, Cambridge, 2013.

\bibitem{BalGuoMonard:2014}
G.~Bal, C.~Guo, and F.~Monard.
\newblock Imaging of anisotropic conductivities from current densities in two
  dimensions.
\newblock {\em SIAM J. Imaging Sci.}, 7(4):2538--2557, 2014.

\bibitem{BalKnudsen:2018}
G.~Bal, K.~Hoffmann, and K.~Knudsen.
\newblock Propagation of singularities for linearised hybrid data impedance
  tomography.
\newblock {\em Inverse Problems}, 34(2):024001, 19, 2018.

\bibitem{Bal2012}
G.~Bal, W.~Naetar, O.~Scherzer, and J.~Schotland.
\newblock The {L}evenberg-{M}arquardt iteration for numerical inversion of the
  power density operator.
\newblock {\em J. Inverse Ill-Posed Probl.}, 21(2):265--280, 2013.

\bibitem{BrennerScott:2008}
S.~C. Brenner and L.~R. Scott.
\newblock {\em The {M}athematical {T}heory of {F}inite {E}lement {M}ethods}.
\newblock Springer, New York, third edition, 2008.

\bibitem{Capdeboscq2009}
Y.~Capdeboscq, J.~Fehrenbach, F.~de~Gournay, and O.~Kavian.
\newblock Imaging by modification: numerical reconstruction of local
  conductivities from corresponding power density measurements.
\newblock {\em SIAM J. Imaging Sci.}, 2(4):1003--1030, 2009.

\bibitem{chen_augmented_1999}
Z.~Chen and J.~Zou.
\newblock An augmented {L}agrangian method for identifying discontinuous
  parameters in elliptic systems.
\newblock {\em SIAM J. Control Optim.}, 37(3):892--910, 1999.

\bibitem{CohenDevore:2015}
A.~Cohen and R.~DeVore.
\newblock Approximation of high-dimensional parametric {PDE}s.
\newblock {\em Acta Numer.}, 24:1--159, 2015.

\bibitem{DunlopStuart:2016}
M.~M. Dunlop and A.~M. Stuart.
\newblock The {B}ayesian formulation of {EIT}: analysis and algorithms.
\newblock {\em Inverse Probl. Imaging}, 10(4):1007--1036, 2016.

\bibitem{EvansGariepy:2015}
L.~C. Evans and R.~F. Gariepy.
\newblock {\em Measure {T}heory and {F}ine {P}roperties of {F}unctions}.
\newblock CRC Press, Boca Raton, FL, revised edition, 2015.

\bibitem{FoxHerzfeldRock1946}
F.~E. Fox, K.~F. Herzfeld, and G.~D. Rock.
\newblock The effect of ultrasonic waves on the conductivity of salt solutions.
\newblock {\em Phys. Rev.}, 70:329--339, 1946.

\bibitem{gallouet1999}
T.~Gallouet and A.~Monier.
\newblock On the regularity of solutions to elliptic equations.
\newblock {\em Rend. Mat. Appl. (7)}, 19(4):471--488 (2000), 1999.

\bibitem{GebauerScherzer:2008}
B.~Gebauer and O.~Scherzer.
\newblock Impedance-acoustic tomography.
\newblock {\em SIAM J. Appl. Math.}, 69(2):565--576, 2008.

\bibitem{Groger:1989}
K.~Gr\"oger.
\newblock A {$W^{1,p}$}-estimate for solutions to mixed boundary value problems
  for second order elliptic differential equations.
\newblock {\em Math. Ann.}, 283(4):679--687, 1989.

\bibitem{HannukainenHyvonenMajanderTarvainen:2016}
A.~Hannukainen, N.~Hyv\"onen, H.~Majander, and T.~Tarvainen.
\newblock Efficient inclusion of total variation type priors in quantitative
  photoacoustic tomography.
\newblock {\em SIAM J. Imaging Sci.}, 9(3):1132--1153, 2016.

\bibitem{HinzeKaltenbacher:2018}
M.~Hinze, B.~Kaltenbacher, and T.~N.~T. Quyen.
\newblock Identifying conductivity in electrical impedance tomography with
  total variation regularization.
\newblock {\em Numer. Math.}, 138(3):723--765, 2018.

\bibitem{HoffmannKnudsen2014}
K.~Hoffmann and K.~Knudsen.
\newblock Iterative reconstruction methods for hybrid inverse problems in
  impedance tomography.
\newblock {\em Sens. Imaging}, 15(1):art. id. 96, 2014.

\bibitem{HubmerKnudsenLiSherina2018}
S.~Hubmer, K.~Knudsen, C.~Li, and E.~Sherina.
\newblock Limited angle electrical impedance tomography with power density
  data.
\newblock {\em Inv. Problems Sci. Eng.}, pages in press, arXiv:1712.08009,
  2018.

\bibitem{ItoJin:2015}
K.~Ito and B.~Jin.
\newblock {\em Inverse {P}roblems: {T}ikhonov {T}heory and {A}lgorithms}.
\newblock World Scientific, Hackensack, NJ, 2015.

\bibitem{JinMaass:2012}
B.~Jin and P.~Maass.
\newblock An analysis of electrical impedance tomography with applications to
  {T}ikhonov regularization.
\newblock {\em ESAIM Control Optim. Calc. Var.}, 18(4):1027--1048, 2012.

\bibitem{JossinetLavandierCathignol1998}
J.~Jossinet, B.~Lavandier, and D.~Cathignol.
\newblock The phenomenology of acousto-electric interaction signals in aqueous
  solutions of electrolytes.
\newblock {\em Ultrasonics}, 36(1-5):607--613, 1998.

\bibitem{Korber1909}
F.~K{\"o}rber.
\newblock {\"U}ber den {E}influss des {D}ruckes auf das elektrolytische
  {L}eitverm{\"o}gen von {L}{\"o}sungen.
\newblock {\em Z. Phys. Chem.}, 67(1):212--248, 1909.

\bibitem{KuchmentKunyansky:2011}
P.~Kuchment and L.~Kunyansky.
\newblock 2{D} and 3{D} reconstructions in acousto-electric tomography.
\newblock {\em Inverse Problems}, 27(5):055013, 21, 2011.

\bibitem{kuchment_stabilizing_2012}
P.~Kuchment and D.~Steinhauer.
\newblock Stabilizing inverse problems by internal data.
\newblock {\em Inverse Problems}, 28(8):084007, 20, 2012.

\bibitem{LiKaramehmedovicSherinaKnudsen2018}
C.~Li, M.~K.~E. Sherina, and K.~Knudsen.
\newblock Levenberg-{M}arquardt algorithm for acousto-electric tomography based
  on the complete electrode model.
\newblock {\em In preparation}, 2018.

\bibitem{LoggWells2010a}
A.~Logg and G.~N. Wells.
\newblock D{OLFIN}: automated finite element computing.
\newblock {\em ACM Trans. Math. Software}, 37(2):Art. 20, 28, 2010.

\bibitem{Logg2012}
A.~Logg, G.~N. Wells, T.~F. Book, and G.~N. Wells, editors.
\newblock {\em {Automated Solution of Differential Equations by the Finite
  Element Method}}.
\newblock Springer-Verlag, Berlin, 2012.

\bibitem{Mercaldo2009}
A.~Mercaldo, S.~{Segura de Le{\'{o}}n}, and C.~Trombetti.
\newblock {On the solutions to 1-{L}aplacian equation with {L1} data}.
\newblock {\em J. Funct. Anal.}, 256(8):2387--2416, 2009.

\bibitem{MonardRim:2018}
F.~Monard and D.~Rim.
\newblock Imaging of isotropic and anisotropic conductivities from power
  densities in three dimensions.
\newblock {\em Inverse Problems}, 34(7):075005, 26pp, 2018.

\bibitem{NesiPalombaro:2014}
V.~Nesi, M.~Palombaro, and M.~Ponsiglione.
\newblock Gradient integrability and rigidity results for two-phase
  conductivities in two dimensions.
\newblock {\em Ann. Inst. H. Poincar\'e Anal. Non Lin\'eaire}, 31(3):615--638,
  2014.

\bibitem{Roy-Borzi}
S.~Roy and A.~Borz\`\i.
\newblock A new optimization approach to sparse reconstruction of
  log-conductivity in acousto-electric tomography.
\newblock {\em SIAM J. Imaging Sci.}, 11(2):1759--1784, 2018.

\bibitem{RudinOsherFatemi:1992}
L.~I. Rudin, S.~Osher, and E.~Fatemi.
\newblock Nonlinear total variation based noise removal algorithms.
\newblock {\em Phys. D}, 60(1-4):259--268, 1992.

\bibitem{ScherzerGrasmair:2009}
O.~Scherzer, M.~Grasmair, H.~Grossauer, M.~Haltmeier, and F.~Lenzen.
\newblock {\em Variational {M}ethods in {I}maging}.
\newblock Springer, New York, 2009.

\bibitem{SchusterKaltenbacher:2012}
T.~Schuster, B.~Kaltenbacher, B.~Hofmann, and K.~S. Kazimierski.
\newblock {\em Regularization {M}ethods in {B}anach {S}paces}.
\newblock Walter de Gruyter GmbH \& Co. KG, Berlin, 2012.

\bibitem{Vogel1996}
C.~R. Vogel and M.~E. Oman.
\newblock Iterative methods for total variation denoising.
\newblock {\em SIAM J. Sci. Comput.}, 17(1):227--238, 1996.

\bibitem{WidlakScherzer:2012}
T.~Widlak and O.~Scherzer.
\newblock Hybrid tomography for conductivity imaging.
\newblock {\em Inverse Problems}, 28(8):084008, 28, 2012.

\bibitem{ZhangWang:2004}
H.~Zhang and L.~V. Wang.
\newblock Acousto-electric tomography.
\newblock In {\em Proc. SPIE 5320, Photons Plus Ultrasound: Imaging and
  Sensing}, pages doi: 10.1117/12.532610, 5 pp., 2004.

\end{thebibliography}

\end{document}